 \newtheorem{theo}{Theorem}[section]
 \newtheorem{lem}{Lemma}[section]
 \newtheorem{prop}{Proposition}[section]
  \newtheorem{cor}{Corollary}[section]
 \newtheorem{rem}{Remark}[section]
\newcommand{\be}{\begin{eqnarray}}
\newcommand{\ee}{  \end{eqnarray}}
\newcommand{\beno}{\begin{eqnarray*}}
\newcommand{\eeno}{  \end{eqnarray*}}
\newcommand\R{{\mathbb R}}
\newcommand{\ymax}{\bar y}
\newcommand{\Uad}{\mathcal U^{t,y}_{\text{ad}}}
\begin{document} 

\title[Optimal management of pumped-storage hydroelectric production] {Optimal management of pumped hydroelectric production with state constrained optimal control} 

\author{Athena Picarelli}\thanks{Universit\`a di Verona, Dipartimento di Scienze Economiche, Via Cantarane 37129, Verona (VR), Italy }
\author{Tiziano Vargiolu}\thanks{Universit\`a degli Studi di Padova, Dipartimento di Matematica, Via Trieste 63, 35121, Padova (PD), Italy} 

\date{\today} 

\begin{abstract}
We present a novel technique to solve the problem of managing optimally a pumped hydroelectric storage system. This technique relies on representing the system as a stochastic optimal control problem with state constraints, these latter corresponding to the finite volume of the reservoirs. Following the recent level-set approach  presented in \textit{O. Bokanowski, A. Picarelli, H. Zidani, State-constrained stochastic optimal control problems via reachability approach, SIAM J. Control and Optim. 54 (5) (2016)}, we transform the original constrained problem in an auxiliary unconstrained one in augmented state and control spaces, obtained by introducing an exact penalization of  the original state constraints. The latter problem is fully treatable by classical dynamic programming arguments. 
\end{abstract}

\maketitle 

\section{introduction}

In the current transition to a low carbon economy, one of the most prominent issues is that the most used renewable energy sources (RES), i.e. photovoltaic and wind, are non-dispatchable: for this reason, there is a strong need to store the energy produced when they are available in order to use it when they are not. 

 As for today, the most cost-efficient method to store electricity is with pumped hydroelectric storage (PHS) systems, which in 2017 accounted for about 95\% of all active tracked storage installations worldwide, with a total installed capacity of over 184 GW \cite{DOE}. In brief, these systems consist of two or more dam-based hydroelectric plants, linked sequentially so that, while the water used to produce electricity in the lower reservoirs is lost as in traditional hydroelectric plants, upper reservoirs discharge their water in lower reservoirs, where it can be stored and possibly pumped back to upper reservoirs when the electricity price is low (typically in off-peak periods). In this way, a PHS can produce electricity in peak periods, i.e. when its price is high, and recharge the upper reservoir in off-peak periods. These systems can be efficiently coupled with non-dispatchable renewable energy sources (like photovoltaic or wind), thus providing a very effective way to store possible surplus renewable energy when the price is low, and to use it when needed. An alternative is of course to connect the system directly to the power grid: also in this case, PHS is likely to use renewable-generated electricity to pump back water in the upper reservoirs, given the very low marginal generating costs of renewable energy. 

In the current literature, the mathematical treatment of PHS is usually done by computing the fair value as the sum of discounted payoffs when operated at optimum\footnote{ In this setting, we assume that the investor is risk-neutral. Although here we are not evaluating financial assets, but rather incomes coming from industrial activity, this is in line with all the related literature (see e.g. \cite{McDonaldSiegel}), and is justified by the following financial argument. Even if the underlying assets are in principle not traded, a risk-neutral evaluation can be applied as long as one can find hedging instruments that can be storable and liquidly traded: see \cite{TDR} for more economic insight, and \cite[Remark 3.6]{CCGV} for the mathematical derivation of such a result for structured products like power plants.}. To perform this computation, one can find in literature two alternative approaches. The first one is via operations research techniques by formulating a linear/quadratic programming model, see e.g. \cite{BLM,LWM,VMBI}. While these techniques allow to model potentially complex networks and constraints, the optimal exercise policy of the system within this approach turns out to be given only by the numerical solution of a linear/quadratic program, from which it is very difficult to extract the policy as a function of the relevant state variables (typically, the spot price of electricity and the reservoir levels). 

{The second approach is based on stochastic optimal control in continuous time, where with the dynamic programming approach one derives a partial differential equation called the Hamilton-Jacobi-Bellman (HJB) equation, i.e. a second-order partial differential equation, see e.g. \cite{FelWeb,ShaWun,TDR,ZhaDav,ForChe08} and references therein. The HJB equation is usually nonlinear, and without an explicit analytical solution: thus, one should recur to numerical methods, and this limits the dimensionality that one can reach. Proof of this is that in all these papers (apart from the notable exception of \cite{FelWeb}), the lower reservoir has infinite capacity (e.g. is a sea basin): this entails that their model has one less state variable and much easier state constraints. However, the main advantage of this technique is that one can  obtain the optimal pumping/producing strategy as a feedback control, i.e. as a function of the relevant state variables (here being time, electricity price and water levels in the basins). }

{As said above, our approach is based on optimal stochastic control in continuous time and dynamic programming, leading to a HJB equation.} Indeed, dynamic programming techniques are usually applied to prove that the value function associated to optimal control problems is the unique solution of the HJB equation in the viscosity sense and to characterize the optimal policy  as feedback of the state variables. However, while this is a well-established research field for generic unconstrained optimal control problems, PHS has the peculiarity that, being the reservoirs finite, the state variables corresponding to their levels have to satisfy given constraints: thus, we have to formulate a state-constrained optimal control problem. 
There exists a huge literature on state-constrained optimal control problems and their HJB characterization. We refer the reader e.g. to \cite{BB95, BR98, IL02, K94}  for stochastic control problems and to \cite{CL90, IK96, S86, S862} for deterministic ones. In this case, the characterization of the value function as a viscosity solution of a HJB equation is intricate and usually requires a delicate interplay between the dynamics of the processes involved and the set of constraints, { see e.g. \cite{BCV}}. First, some \textit{viability} (or \textit{controllability}) conditions have to be satisfied to guarantee the finiteness of the value function, second, specific properties on the set of admissible controls must hold to ensure the continuity of the value function and its PDE characterization. {When the behavior at the boundary of the constrained region is clear, one could think in principle to use the same penalization techniques as in \cite{BCV}. However, when (as in our case) one does not have natural boundary conditions, this approach would not characterize the solution univoquely.} This often makes the problem not tractable by the classical dynamic programming techniques.

In this paper we follow the alternative approach developed in \cite{BPZ16} to provide a fully characterization of the value function and optimal strategy associated to the optimal control problem in a general framework.  We pass by a suitable auxiliary reformulation of the problem which allows a simplified treatment of the state constraints. This is achieved by the use of  the so called  \textit{level-set method}, built to permit a treatment of state constraints by an exact penalization technique.  Initially introduced by Osher and Sethian in \cite{OS88} to model some deterministic front  propagation phenomena, the level-set approach has been used in many applications related to  controlled systems (see e.g. \cite{ ABZ13, FGL94, KV06, ML11, ST02b}).

In our case, the level-set method allows to link  the original state constrained problem to an auxiliary optimal control problem, referred as the \textit{level-set problem} defined on an augmented state and control space, but without state constraints. This level-set problem has the great advantage of leading to a complete characterization of the original one and of being, at the same time, fully treatable by classical dynamic programming argument under very mild assumptions. 

The rest of the paper is organized as follows. We introduce the optimal control problem and the main assumptions in Section \ref{sec:problem}. In Section \ref{sec:HJB} we provide a HJB  characterization of the associated value function under suitable controllabilty conditions on the system dynamics.  Then, under a simplified model, we discuss  in Section \ref{sec:issues} the main difficulties arising from the presence of state constraints when such assumptions are not satisfied. In Section \ref{sec:2dam} we present the level set method and provide the main results  of the paper. A numerical validation of the proposed approach is provided in Section \ref{sec:tests}.

\section{Formulation of the problem and main assumptions}\label{sec:problem}

Let $(\Omega, \mathcal F, \mathbb P)$ be a probability space supporting a one-dimensional Brownian motion $B_\cdot$ and let $\mathbb F$ be the filtration generated by $B$.
We consider the electricity price for the period $[t,T]$ governed by the following stochastic differential equation in $\R$:
\be\label{eq:demand}
\mathrm{d} X_s=b(s,X_s)\mathrm{d}s+\sigma(s,X_s)\mathrm{d} B_s\qquad s\in[t,T],\qquad X_t=x.
\ee
We work under the following assumption: 
\begin{itemize}
\item[\bf{(H1)}] $b:[0,T]\times \R\to\R$ and $\sigma:[0,T]\times \R\to \R$ and  there exists $C_0\geq 0$ such that for any $x, \bar x\in \R$, $t\in [0,T]$ one has
 \begin{eqnarray*}
 & &|b(t,x)-b(t,\bar x)|+|\sigma(t,x)-\sigma(t,\bar x)|\leq C_0  |x-\bar x|,\\
 & &|b(t,x)|+|\sigma(t,x)|\leq C_0 (1+ |x|).
 \end{eqnarray*}
\end{itemize}
Under this assumption there exists a unique strong solution, denoted by $X^{t,x}_\cdot$, to equation \eqref{eq:demand}. 
We also assume the following non negativity condition:
\begin{itemize}
\item[\bf{(H2)}] for any $t\in [0,T], x\geq 0$ one has $X^{t,x}_s\geq 0, \forall s\geq t$ a.s. .
\end{itemize}

Different price models can be taken into account. In order to guarantee assumptions (H1) and (H2) being satisfied we will focus on the following two dynamics:
\begin{itemize}
\item[a)] Price modeled as a Geometric Brownian Motion (GBM): $b(t,x) = b x$ and $\sigma(t,x) = \sigma x$ for some $b, \sigma\geq 0$;
\item[b)] Price modeled as an Inhomogeneous Geometric Brownian Motion (IGBM): $b(t,x) = a - b x$ and $\sigma(t,x) = \sigma x$ for some $a, b, \sigma\geq 0$;
\end{itemize}
Both models provide non-negative prices. In particular, the second model is used in many financial applications where one wants a process exhibiting non-negativity and mean-reversion, for example when modeling interest rates, default intensities, volatilities, etc., see e.g. \cite{Capriotti} and references therein. 

We assume to have a water storage, composed of two reservoirs. The two reservoirs  are filled with  rate $\beta_1: [0,T]\to \R$ and $\beta_2:[0,T]\to \R$ respectively with $\beta_1, \beta_2$ given positive bounded functions.
The volume $Y_{1,\cdot}$ of water remaining at every time in the first reservoir is controlled by $u_1$ and  it satisfies
\be\label{eq:reserve1}
\mathrm{d} Y_{1,s}=(\beta_1(s)-u_{1,s})\mathrm{d} s \qquad s\in[t,T],\qquad Y_{1,t}=y_1,
\ee
where $u_1$ can either be negative (pumping water  up) or positive (pulling water down).
While for the second reservoir one has  
\be\label{eq:reserve2}
\mathrm{d} Y_{2,s}=(\beta_2(s)+u_{1,s} -u_{2,s})\mathrm{d} s \qquad s\in[t,T],\qquad Y_{2,t}=y_2,
\ee
for some positive $u_2$.  We will assume that the couple $(u_1,u_2)$, that is our control, is a progressively measurable process  taking values in $U:=[- { \underline u_1},\bar u_1]\times [0,\bar u_2]$. We denote by $\mathcal U$ the set of these controls\footnote{{Physically, a more sophisticated model could be like that in \cite{TDR}, who perform a fine modeling of the physical constraints for the water flows between the higher and the lower basin (this one still having infinite capacity), and consider the water inflow of the upper basin as stochastic. Differently from this, for ease of exposition we treat both the water inflow into the two basins, as well as the constraints on the outflow, as deterministic quantities not depending on water height. Our simplification has no practical consequences when the total height of the higher basin is negligible with respect to its relative height with respect to the lower one; if this is not the case, however, these constraints could be easily included in our model. }}. For any $t\in [0,T]$, $y\equiv (y_1,y_2)$ and any choice of $u\equiv(u_1,u_2)\in \mathcal U$, we denote by $Y^{t,y,u}_\cdot\equiv(Y^{t,y,u}_{1,\cdot},Y^{t,y,u}_{2,\cdot})$ the solution to \eqref{eq:reserve1}-\eqref{eq:reserve2}.
For $i\in\{1,2\}$, the value $Y^{t,y,u}_{i,\cdot}$ is required to remain non negative and bounded by a maximum value $\ymax_i$ (the reservoir capacity) along the interval $[t,T]$. This is expressed by the following constraint on the state:
$$
(Y^{t,y,u}_{1,s},Y^{t,y,u}_{2,s})\in [0,\ymax_1]\times [0,\ymax_2]=: K ,\qquad \forall s\in [t,T]\quad \text{a.s.}\,.
$$  
The amount of electricity instantaneously {produced or consumed} by the reservoirs is modeled by the function $\kappa:U\to \R$ defined as follows{\footnote{ A more realistic model would incorporate in the function $\kappa$ some  parameter for the efficiency of the energy production process. We could for instance consider $\kappa(u) = \eta_2 u_2 + \eta_1 c(u_1)$ for some $\eta_1,\eta_2\leq 1$. To  simplify the notation here we assumed $\eta_1=\eta_2=1$.}   
$$
\kappa(u) :=  \left(u_2 + c(u_1)\right)  \qquad \text{with } c(u)=\begin{cases} u \quad\text{ if } u \geq 0\\ \gamma u \quad\text{ if } u<0\end{cases} \quad (\gamma >1)
$$
(more generally, one can consider any concave function $c$ such that $c(0)=0$). 
The aim of the controller is to maximize the cash flow obtained by selling the produced electricity $\kappa(u)$ at the price $x$. This results in the following state constrained optimal control problem
\be\label{eq:def_V}
V(t,x,y):=\underset{u\in \Uad}\sup\;\mathbb E\left[\int^T_t L(X^{t,x}_s, u_s)\mathrm{d} s \right]
\ee
with $L:[0,+\infty)\times U\to\R$ given by
$$
L(x, u) := x \kappa(u)
$$
and 
$ \Uad$ the set of progressively measurable processes in $\mathcal U$ such that  $Y^{t,y,u}_s\in K$ for any $s\in [t,T]$ a.s..
We refer at the function $V:[0,T]\times\R\times K\to \R$ as the value function of the problem. 
In the sequel, whenever $\Uad= \emptyset$ we use the convention $V(t,x,y)=-\infty$.
\medskip

Along the paper we will also consider a simplified problem of one single reservoir with level managed by $u_1$ taking values in $U=[0,\bar u]$. We then restrict to the two-dimensional dynamics $(X_s, Y_{1,s})$ and consider 
$$
\kappa(u_1):= u_1.
$$
To simplify the notation,  in this case we will drop the subscripts indices. More precisely, one  gets the optimal control problem \eqref{eq:def_V} with $L(x,u):=x u$ subject to \eqref{eq:demand} and
\beno
\mathrm{d} Y_{s}=(\beta(s) -u_{s})\mathrm{d} s \qquad s\in[t,T],\qquad Y_{t}=y
\eeno
under the state constraint $Y^{t,y,u}_s\in K,\, \forall s\in [t,T]$ a.s. with $K:=[0,\ymax]$.

\section{HJB characterization under controllability assumption}\label{sec:HJB}

Let us start considering the single reservoir model. 
To have a meaningful problem we impose that 
$$
\int^T_0 \beta(s)\mathrm{d} s \leq \bar u T
$$
i.e. the entire amount of {water entering the basin} in the period $[0,T]$ is not greater than the amount that can be withdrawn.\footnote{We are not assuming, as instead is in real dams, that we have the possibility to eliminate the excess water in the reservoir, i.e. to be able to withdraw from the reservoir, if it is close to be full, via a safety discharge, thus not producing electricity. This however typically results in a waste of water, i.e. of potential electricity production, thus of potential profit (recall that we assume the electricity price being nonnegative). Thus, this should always be regarded as suboptimal, and is not modeled here.}}
In what follows we denote by $Q:=[0,+\infty)\times (0,\ymax)$ the spatial domain and by $\overline Q$ its closure.

In order to have a finite value function  on $[0,T]\times \overline Q$  one needs $\Uad\neq \emptyset$ for any $t\in [0,T], y\in [0,\ymax]$. The following assumption ensures this.
\begin{itemize}
\item[\bf{(H3)}]  there exists $\eta>0$ such that $\eta \leq \beta(t) \leq \bar u -\eta , \forall t\in [0,T]$. 
\end{itemize}
\begin{rem}
To guarantee that $\Uad \neq \emptyset$ it is sufficient that $0 \leq \beta(t) \leq  \bar u,  \forall t\in [0,T]$ since, in this case, the control $u_t\equiv\beta(t), \forall t\in [0,T]$ is always admissible. However, the stronger assumption (H3) is what one needs for the well posedness of the state constrained problem given by Theorem \ref{teo:with_h3} below. 
\end{rem}

We start with the following result concerning some regularity and growth properties of $V$.
\begin{lem}\label{lem:growth}
Let assumptions (H1)-(H3) be satisfied. Then, there exists a constant $C\geq 0$ such that for any $t\in [0,T], x, \bar x\in [0,+\infty), y\in [0,\ymax]$ one has
$$
0\leq V(t,x,y)\leq C(1+x)\qquad
\text{ and }\qquad 
|V(t,x,y)-V(t,\bar x,y)|\leq C|x-\bar x|. 
$$
Moreover, for any $(x,y)\in \overline Q$
$$
\lim_{t\to T} V(t,x,y) =0.
$$
\end{lem}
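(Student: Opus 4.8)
The plan is to establish the three claims — nonnegativity, linear upper bound, and Lipschitz continuity in $x$ — by directly exploiting the structure of the running reward $L(x,u)=xu$ together with the standard SDE estimates guaranteed by (H1), and then to handle the terminal limit separately. For the lower bound $V\geq 0$, I would observe that under (H3) the admissible set $\Uad$ is nonempty (the constant control $u_s\equiv\beta(s)$ keeps $Y$ on the fixed level $y$, hence in $K$), and that among admissible controls one has in particular the trivial choice $u_s\equiv 0$ available whenever it is feasible; more robustly, since the integrand $L(X_s,u_s)=X_s u_s$ with $X_s\geq 0$ by (H2) and one may always steer with $u\geq 0$, the supremum is over a family containing controls producing a nonnegative payoff, so $V\geq 0$. (In the two-reservoir case the same reasoning applies using $\kappa(u)\geq 0$ for admissible $u$ with the relevant sign conventions.) This step is essentially a feasibility-and-sign argument and should be routine.

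For the upper bound, I would bound the payoff pathwise: since $0\le u_s\le \bar u$ and $X_s\ge 0$, we have $L(X_s,u_s)=X_s u_s\le \bar u\, X_s$, so
\begin{eqnarray*}
V(t,x,y)\le \bar u\,\mathbb{E}\left[\int_t^T X^{t,x}_s\,\mathrm{d}s\right]\le \bar u\int_t^T \mathbb{E}\big[X^{t,x}_s\big]\,\mathrm{d}s.
\end{eqnarray*}
The key input is the standard moment estimate for SDEs under the linear-growth part of (H1): there is a constant depending only on $C_0$ and $T$ with $\mathbb{E}[X^{t,x}_s]\le \mathbb{E}[|X^{t,x}_s|]\le C(1+|x|)=C(1+x)$ uniformly in $s\in[t,T]$. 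Combining and absorbing $\bar u$ and $T$ into a single constant $C$ yields $V(t,x,y)\le C(1+x)$, uniformly in $t,y$.

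For Lipschitz continuity in $x$, the idea is that the control set $\Uad$ does not depend on $x$ (the constraint involves only $Y^{t,y,u}$, which is $x$-independent), so one may compare $V(t,x,y)$ and $V(t,\bar x,y)$ using the \emph{same} admissible control $u$ for both. Then
\begin{eqnarray*}
|V(t,x,y)-V(t,\bar x,y)|\le \sup_{u\in\Uad}\;\bar u\,\mathbb{E}\left[\int_t^T \big|X^{t,x}_s-X^{t,\bar x}_s\big|\,\mathrm{d}s\right],
\end{eqnarray*}
and the Lipschitz part of (H1) gives the classical flow estimate $\mathbb{E}\big[|X^{t,x}_s-X^{t,\bar x}_s|\big]\le C|x-\bar x|$ uniformly in $s$, whence the claim with a possibly enlarged $C$. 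The main point to be careful about is precisely this interchange: that the admissible set is genuinely independent of the initial price $x$, so the $\sup$ over a common $u$ is legitimate and no state-constraint subtlety enters the price variable.

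Finally, for the terminal limit, both bounds already force the right behavior: from $0\le V(t,x,y)\le \bar u\int_t^T\mathbb{E}[X^{t,x}_s]\,\mathrm{d}s\le \bar u(T-t)\,C(1+x)$, letting $t\to T$ sends the right-hand side to $0$, so $V(t,x,y)\to 0$ by squeezing. The only step I expect to require genuine care rather than bookkeeping is making the moment and flow estimates for $X^{t,x}$ fully uniform in $s\in[t,T]$ with constants independent of $(t,y)$; these are standard consequences of (H1) via Gronwall and the Burkholder–Davis–Gundy inequality, but their uniformity is what makes the single constant $C$ in the statement work.
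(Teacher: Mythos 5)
Your proposal is correct and follows essentially the same route as the paper: nonnegativity from feasibility under (H3) together with the sign of $L$ and (H2); the pathwise bound $L(X^{t,x}_s,u_s)\le \bar u\, X^{t,x}_s$ combined with standard moment and flow estimates under (H1) for the linear growth and the Lipschitz property (the paper phrases these via $L^2$ bounds on $\sup_{s\in[t,T]}X^{t,x}_s$ and $\sup_{s\in[t,T]}|X^{t,x}_s-X^{t,\bar x}_s|$, where you use first moments — an immaterial difference); and the same squeeze $0\le V(T-h,x,y)\le C h(1+x)$ for the terminal limit.
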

\begin{proof}
Thanks to assumption (H3), $V$ is finite on $[0,T]\times \overline Q$, and its non negativity follows directly by the definition of $L$ and by assumption (H2). 
Under assumption (H1) it is well known that for any $x, \bar x\in \R$ 
\begin{align*}
\mathbb E\left[ \sup_{s\in [t,T]} (X^{t,x}_s)^2 \right] \leq C (1+x^2)\qquad \text{and}\qquad \mathbb E\left[ \sup_{s\in [t,T]} \left|X^{t,x}_s - X^{t,\bar x}_s\right|^2 \right] \leq C |x-\bar x|^2,
\end{align*}
where the constant $C$ only depends on $T$ and the constants appearing in assumption (H1).
By the very definition of $L$, it follows that 
\begin{align*}
V(t,x,y) & \leq  \bar u\, \mathbb E \left[ \int^T_t  X^{t,x}_s \mathrm{d} s \right]\leq \bar u (T-t)\,   \mathbb E \left[  \sup_{s\in [t,T]} X^{t,x}_s \right] \\
|V(t,x,y)-V(t,\bar x,y)| & \leq \sup_{\Uad}\, \mathbb E \left[\int^T_t |L(X^{t,x}_s,u_s)-L(X^{t,\bar x}_s,u_s)| \mathrm{d}s \right]\leq \bar u (T-t) \,  \mathbb E \left[\sup_{s\in [t,T] } |X^{t,x}_s-X^{t,\bar x}_s| \right]
\end{align*}
from which the first two statements follow.
 Let $h>0$, from the previous estimate one  also has 
 \begin{align*}
 0\leq V(T-h,x,y) \leq C h (1+x)
 \end{align*}
 which gives the last result.
\end{proof}
The following theorem allows us to characterize the value function $V$  as the unique solution of a suitable HJB partial differential equation. Due to the strong degeneracy of the diffusion term we do not expect to have classical (i.e. $C^{1,2}([0,T]\times \overline Q)$) solutions of the equation. For this reason we use the notion of viscosity solutions.
\begin{theo}\label{teo:with_h3}
Let assumptions (H1)-(H3) be satisfied. Then $V$ is the unique continuous viscosity solution with linear growth of the following state constrained HJB equation
\begin{subequations}
\begin{align}
& -V_t - b(t,x) V_x -\frac{1}{2} \sigma^2(t,x) V_{xx}  +\inf_{u\in U} \left\{  -(\beta(t) - u) V_y - L(x,u)   \right\} = 0  & \qquad \text{on } (0,T)\times \overline Q \label{eq:HJB_constr_price}\\
& V(T,x,y) =0 & \qquad   \text{on }  \overline Q
\end{align}
\end{subequations}
\end{theo}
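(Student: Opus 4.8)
The plan is to characterize $V$ through the dynamic programming principle (DPP), to read off the one-sided viscosity inequalities from it, and then to obtain uniqueness through a comparison theorem tailored to the state constraint. Before any viscosity argument, I would upgrade Lemma \ref{lem:growth}---which already yields the Lipschitz dependence in $x$, the linear growth and the terminal behaviour---to full continuity of $V$ on $[0,T]\times\overline Q$. The delicate point is continuity in the pair $(t,y)$ up to the constrained boundary $\{y=0\}\cup\{y=\ymax\}$, and here assumption (H3) is the essential tool. Given two nearby data $(t,y)$ and $(t,\bar y)$ and an admissible control for one of them, I would exploit the strict inward-pointing margins---at $y=0$ the choice $u=0$ gives drift $\beta(t)\geq\eta>0$, while at $y=\ymax$ the choice $u=\bar u$ gives drift $\beta(t)-\bar u\leq-\eta<0$---to correct the trajectory on a time interval of length $O(|y-\bar y|)$ so that it stays in $K$, producing an admissible control for the second datum whose cost differs by $O(|y-\bar y|)$. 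This supplies the missing (local) continuity in $(t,y)$.

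With continuity in hand, I would establish the DPP for $V$ and extract the two inequalities in the classical way. For the subsolution property on all of $(0,T)\times\overline Q$, take a test function $\phi$ touching $V$ from above at an interior or boundary point; the ``$\leq$'' side of the DPP provides an $\varepsilon$-optimal admissible control, and applying It\^o's formula to $\phi$ along the corresponding trajectory, bounding the running generator by its supremum over $U$, and letting the time step and $\varepsilon$ vanish gives $-\phi_t-b(t,x)\phi_x-\tfrac12\sigma^2(t,x)\phi_{xx}+\inf_{u\in U}\{-(\beta(t)-u)\phi_y-L(x,u)\}\leq 0$. This direction survives up to the boundary precisely because one overestimates by the full supremum over $U$, so the admissibility of the near-optimal control is immaterial. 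For the supersolution property one instead fixes an admissible \emph{constant} control and uses the ``$\geq$'' side of the DPP; at an interior point $y\in(0,\ymax)$ every $u\in U$ is admissible on a short interval, so the inequality holds for each $u\in U$ and hence after taking the infimum, giving the supersolution relation on $(0,T)\times Q$. At the boundary only inward-pointing controls are admissible, so the supersolution inequality need not hold there; this is exactly the constrained-solution structure, subsolution on $\overline Q$ and supersolution on $Q$. The terminal condition is the last assertion of Lemma \ref{lem:growth}.

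The heart of the argument, and the step I expect to be the main obstacle, is the comparison principle that yields uniqueness: every upper semicontinuous subsolution $w_1$ on $\overline Q$ of linear growth must be dominated by every lower semicontinuous supersolution $w_2$ on $Q$ of linear growth, once $w_1(T,\cdot)\leq w_2(T,\cdot)$. The difficulty is that the doubling-of-variables argument may place the critical point of $w_1-w_2$ on $\{y=0\}\cup\{y=\ymax\}$, where $w_2$ carries no information. The remedy is Soner's technique based on (H3): using the inward-pointing margins I would perturb the subsolution, replacing $w_1$ by $w_1-\lambda\,\chi(y)$ with $\chi$ blowing up as $y\to 0^+$ and $y\to\ymax^-$ (equivalently, shifting the $y$-argument slightly into the interior), which turns it into a \emph{strict} subsolution whose supremum against $w_2$ is attained in the interior $Q$, where the standard Crandall--Ishii comparison applies. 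Because the dynamics is first order in $y$ (no $V_{yy}$ or $V_{xy}$ terms), Ishii's lemma is needed only in the $x$-variable, which lightens the second-order matrix inequality, while the unbounded $x$-direction is controlled by the usual linear-growth penalization (for instance adding a small multiple of $1+x$ and invoking the bounds of (H1)). Letting $\lambda\to0$ removes the perturbation and gives $w_1\leq w_2$; applying this with $w_1=w_2=V$ yields uniqueness and completes the characterization.
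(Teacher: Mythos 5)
Your proposal is correct and follows essentially the same route as the paper, which does not spell out a proof but simply invokes the dynamic programming principle and the comparison principle for state-constrained HJB equations, deferring to \cite{BB95} and \cite{CapDolcLions90}. Your sketch---continuity via the inward-pointing margins of (H3), the constrained viscosity notion (subsolution on $\overline Q$, supersolution on $Q$, correctly oriented for this maximization problem), and comparison by perturbing toward the interior in the spirit of Soner and Capuzzo-Dolcetta--Lions---is precisely the argument those references supply.
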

{The proof follows by the dynamic programming and comparison principle for viscosity solutions of state-constrained HJB equations. We remand to \cite{BB95} and \cite{CapDolcLions90} for further details.} 

It is possible to check that the analogous of assumption (H3) in the two dams setting is the following requirement: 
\begin{itemize}
\item[\bf(H3)'] there exists $\eta>0$ such that for all $t\in [0,T]$
$$ 
\eta -{\underline u}_1 \leq \beta_1(t) \leq {\bar u}_1 -\eta,\quad   -({\bar u}_1\wedge \underline u_1)+\eta  \leq \beta_2(t) \leq {\bar u}_2 +{\underline u}_1 -\eta, \quad \eta\leq   \beta_1(t)+\beta_2(t)\leq {\bar u}_2 - \eta.
$$
\end{itemize}
which leads to the following HJB equation 
\begin{equation} \tag{5a'}
-V_t  - b(t,x) V_x -\frac{1}{2} \sigma^2(t,x) V_{xx} +\inf_{u\in U} \Big\{  -(\beta_1(t) - u_1) V_{y_1} -(\beta_2(t) + u_1-u_2) V_{y_2}  - L(x,u)   \Big\} = 0
\end{equation}

\begin{figure}
\includegraphics[width=0.6\textwidth]{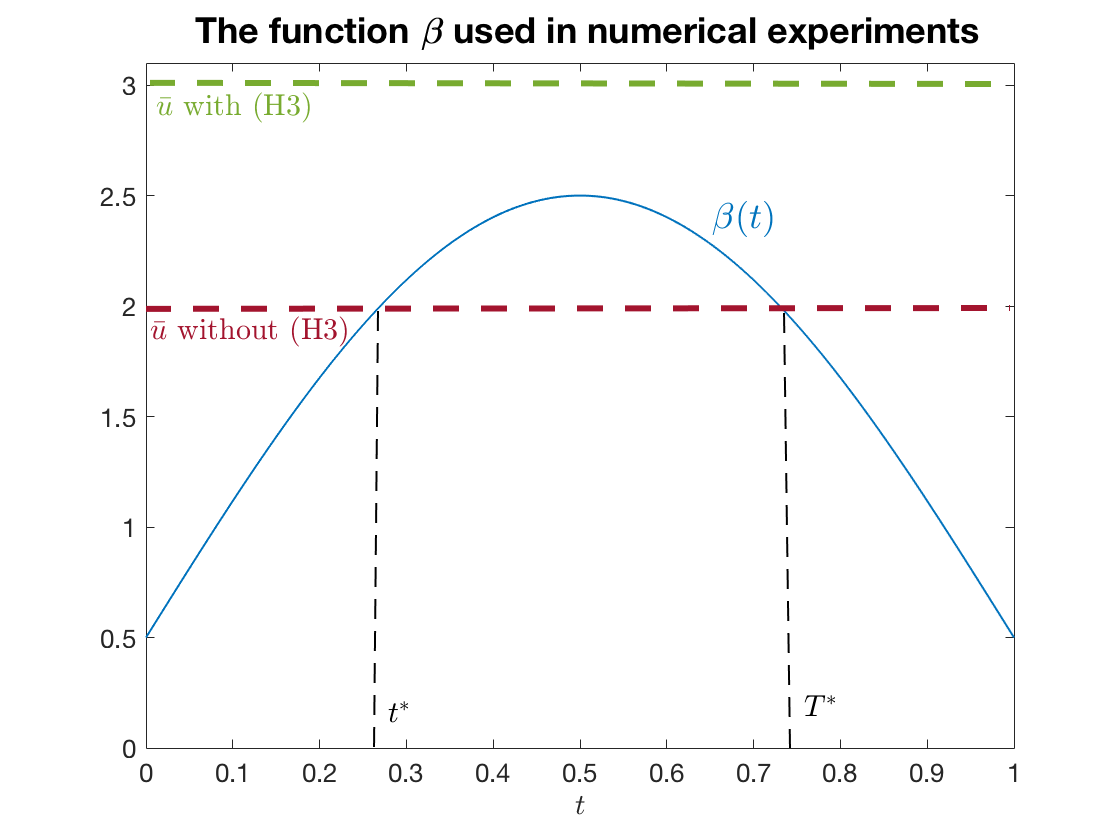}
\caption{The function $\beta(t) = 2\sin(\pi t) +0.5$ used in our numerical tests.}\label{fig:beta}
\end{figure}

Problem \eqref{eq:HJB_constr_price} does not admit, in general, an explicit solution.  
We then need to approximate its solution (i.e. the value function and the optimal feedback) numerically.
Here, we use a semi-Lagrangian scheme, see  e.g. \cite{ForChe07, ForChe08}. 
In our numerical experiments, we fix the time horizon $T=1$, the maximal capacity of the reservoir $\bar y=1$ 
and take the following function 
$$\beta(t) = 2\sin(\pi t)+0.5$$
as the filling rate of the reservoir. The function $\beta(\cdot)$ is plotted in Figure \ref{fig:beta}. In this section we assume that the dam under consideration admits maximal discharge rate $\bar u = 3$. It is immediate to observe that this value is sufficiently big to ensure that  assumption (H3) is satisfied, which allows us to use Theorem \ref{teo:with_h3} to characterize the solution of our problem. 
\\
\\
\begin{figure}
\includegraphics[width=0.45\textwidth]{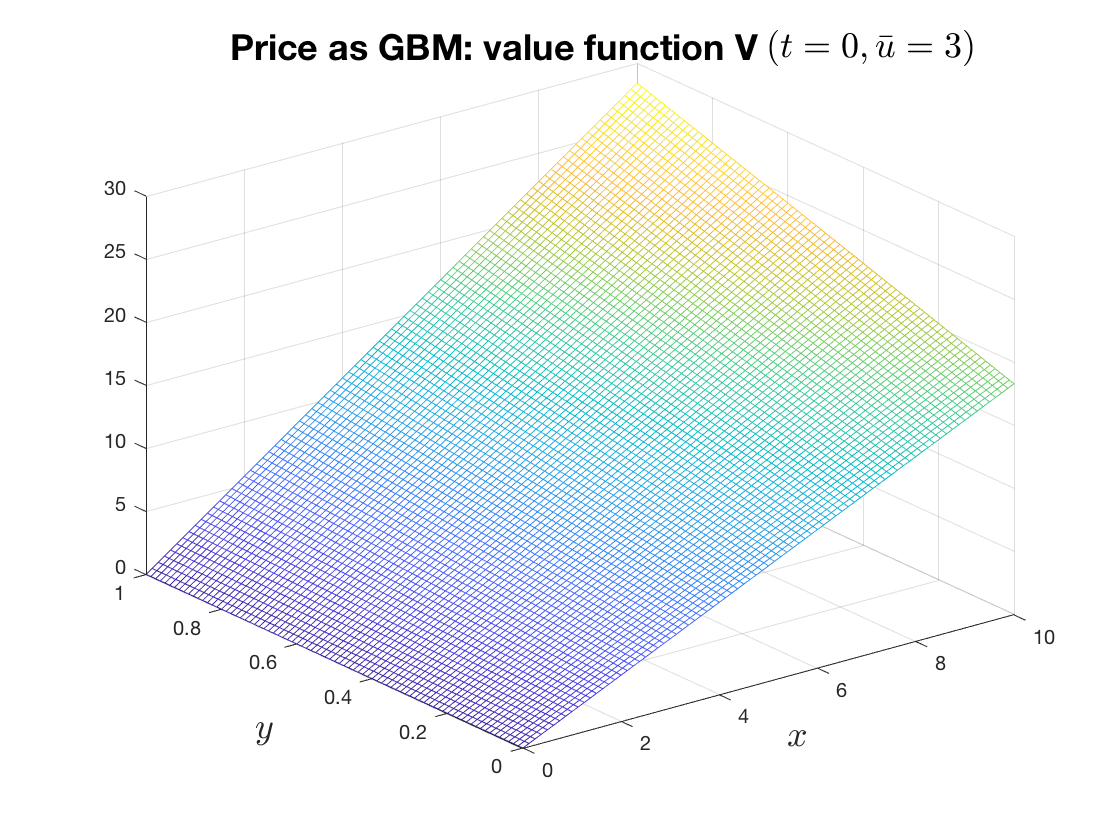}
\includegraphics[width=0.45\textwidth]{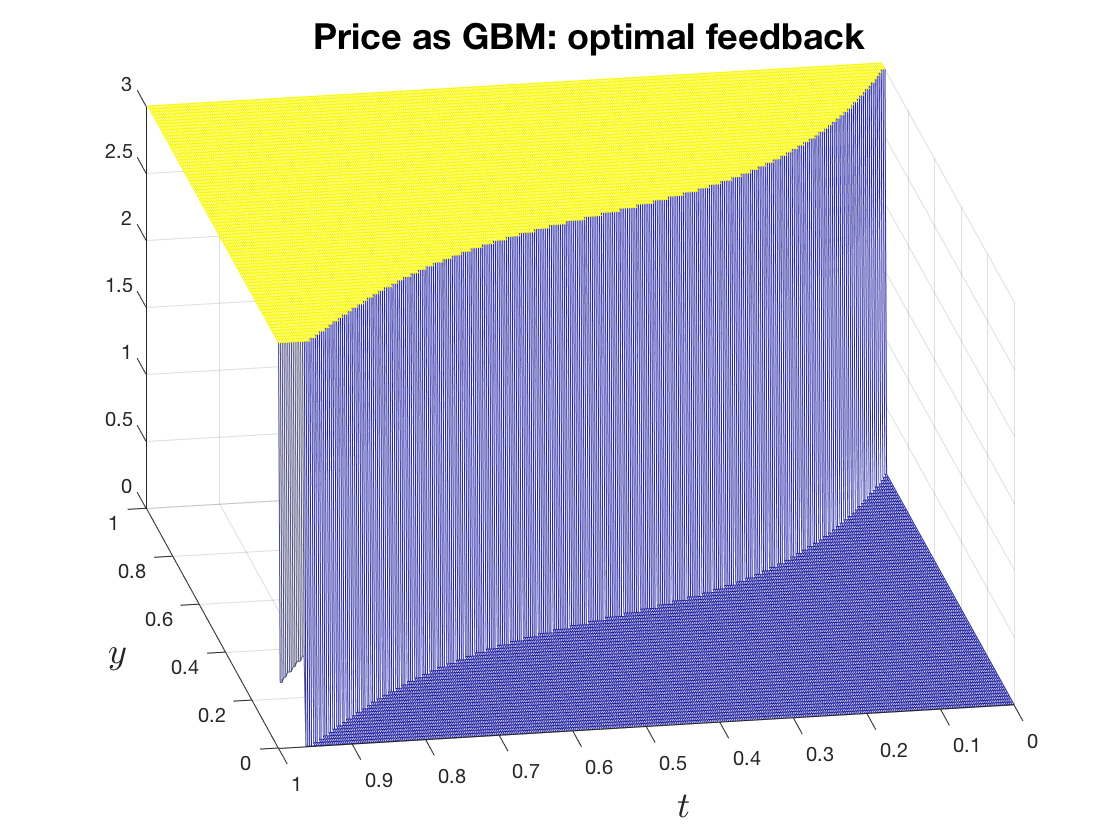}
\caption{Single dam, price modeled as a GBM with $b(t,x)=0.05 x$ and $\sigma(t,x)=0.1 x$, maximal discharge rate $\bar u=3$ (assumption (H3) is satisfied). Left: numerical approximation of the value function $V$ at $t=0$ . Right: optimal feedback as a function of $(t,y)$}\label{fig:1dam_price}
\end{figure}
In Figure \ref{fig:1dam_price} we plot a numerical approximation of the value function $V$ at time $t=0$ (left) and the optimal feedback control (right)
obtained  solving \eqref{eq:HJB_constr_price} for a price  process following a GBM. 
Notice that when the price is modeled as a GBM the value function can be factorized, as the electricity price depends on the initial value $X_t = x$ just via a multiplication factor, thus we have
\begin{equation} \label{linearinx}
V(t,x,y) = \sup_{u\in \Uad} x\, \mathbb E\left[ \int^T_t e^{(b-\frac12 \sigma^2) s + \sigma B_s} u_s\mathrm d s\right] =: x \,v(t,y) 
\end{equation}
where $v$ satisfies a simplified version of the HJB equation, given by
$$ -v_t(t,x) + \inf_{u\in \Uad} \left\{ - b  v(t,x) - (\beta(t) - u) v_y(t,x) - u \right\} = 0 $$
still with terminal condition $v(T,y) = 0$. Given this multiplicative decomposition, here we have the optimal control which ends up being just a feedback control of $(t,y)$, without explicit dependence on $x$.
In terms of the dependency on the $y$ variable, we can see that the graphics reflects quite well the expectations that we have on this model. Indeed, the value function is increasing with respect to the reservoir level $y$, and the dependence here seems almost linear: this is maybe due to the fact that, regardless on the current level $y$, we always have the maximum possible flexibility in the control $u$, and one can let the dam fill or use its flow always at maximum speed in either direction.
\\
\\
 The case of the electricity price modeled as an IGBM is given in Figure \ref{fig:1dam_IGBMprice}. 
With this process, we cannot use the multiplicative decomposition of the value function seen in the GBM case, thus the optimal feedback will in general depend on state $x$. We notice that the value function exhibits a linear growth both in $x$ as in $y$, as in the GBM case (even if here, differently from the GBM case, we have $V(t,0,y) > 0$). 
Here the interpretation is the same as in the GBM case: regardless on the current level $y$, we always have the maximum possible flexibility in the control $u$. Also, the optimal control is always of bang-bang type and increases with respect to $x$, which is quite natural, as it is optimal to produce more and more as the electricity price increases; conversely, when the price is low, it is optimal to delay production and wait for the price to increase again. 

In the two dams model we plot the results obtained under a GBM and IGBM model in Figure \ref{fig:2dam_price} and \ref{fig:2dam_priceIGBM}, respectively. Here, we take $\beta_1=\beta_2=\beta$, $\gamma=1.5$, $\bar y_1=\bar y_2=1$, $u_1\in [-1, 3]$ and  $u_2\in [0, 5.5]$ which ensures $(H3)'$ being satisfied. We can observe similar results as the single dam model: if the price is modeled by a GBM one gets an optimal feedback independent of $x$ and  with the same qualitative behavior observed in the case of a single dam. When the price if modeled by an IGBM, if the price is above the long-term mean (which is 5 here), the optimal strategy is to discharge independently of the reservoir level.  Conversely, if  the price is below, it is optimal to let the reservoirs be filled; observe that in this case, for very low levels of the price (see the case $x=0.5$ in Figure \ref{fig:2dam_priceIGBM}), the water is also pumped back to the upper reservoir.



\begin{figure}
\includegraphics[width=0.5\textwidth]{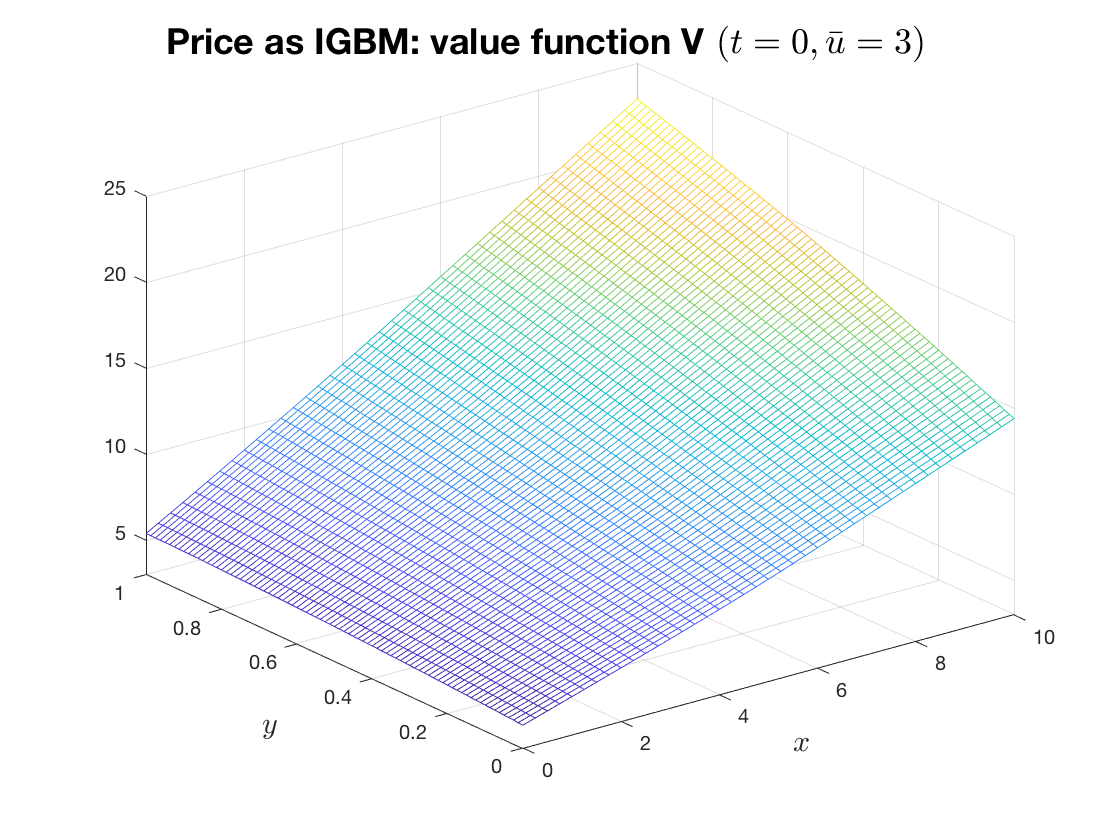}\\
\includegraphics[width=0.24\textwidth]{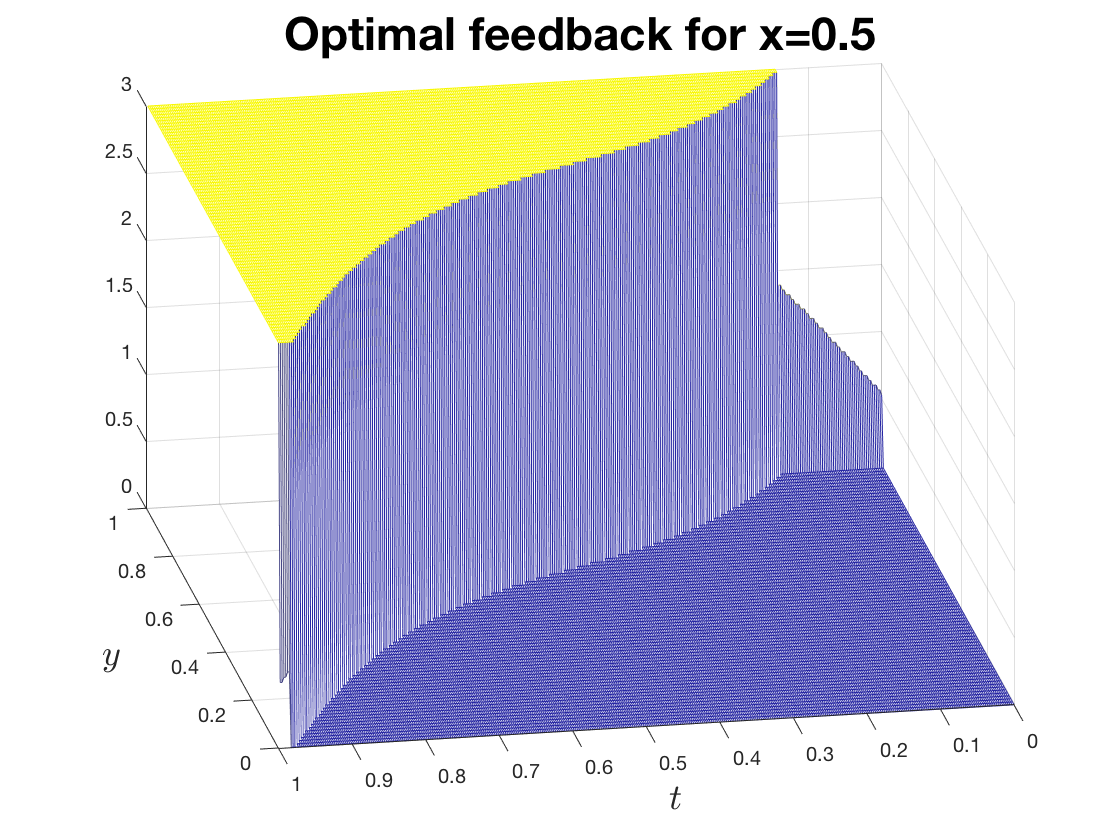}
\includegraphics[width=0.24\textwidth]{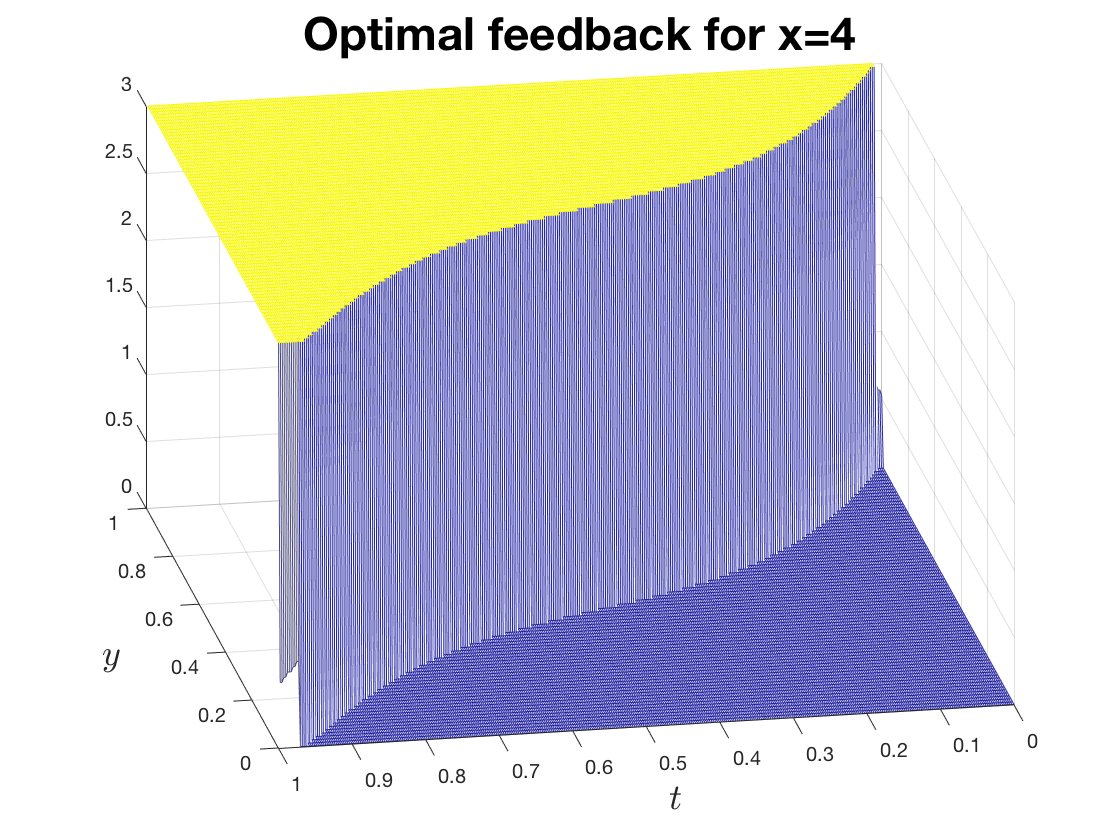}
\includegraphics[width=0.24\textwidth]{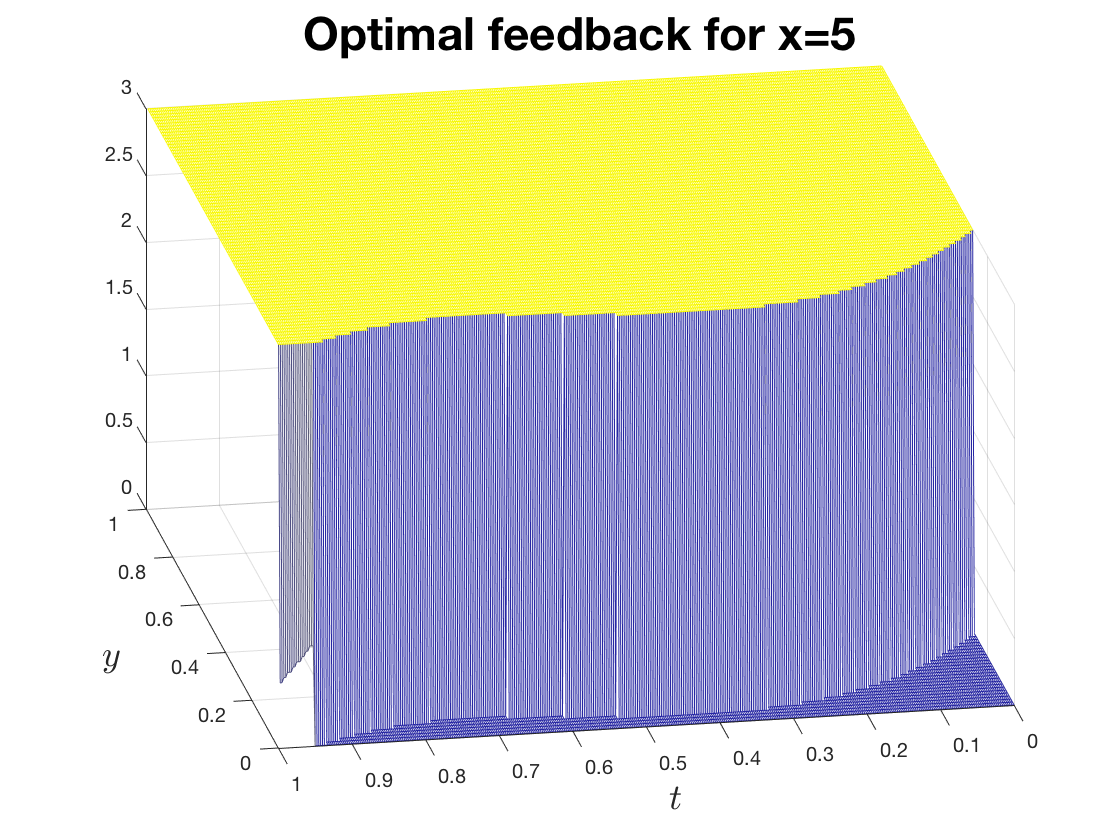}
\includegraphics[width=0.24\textwidth]{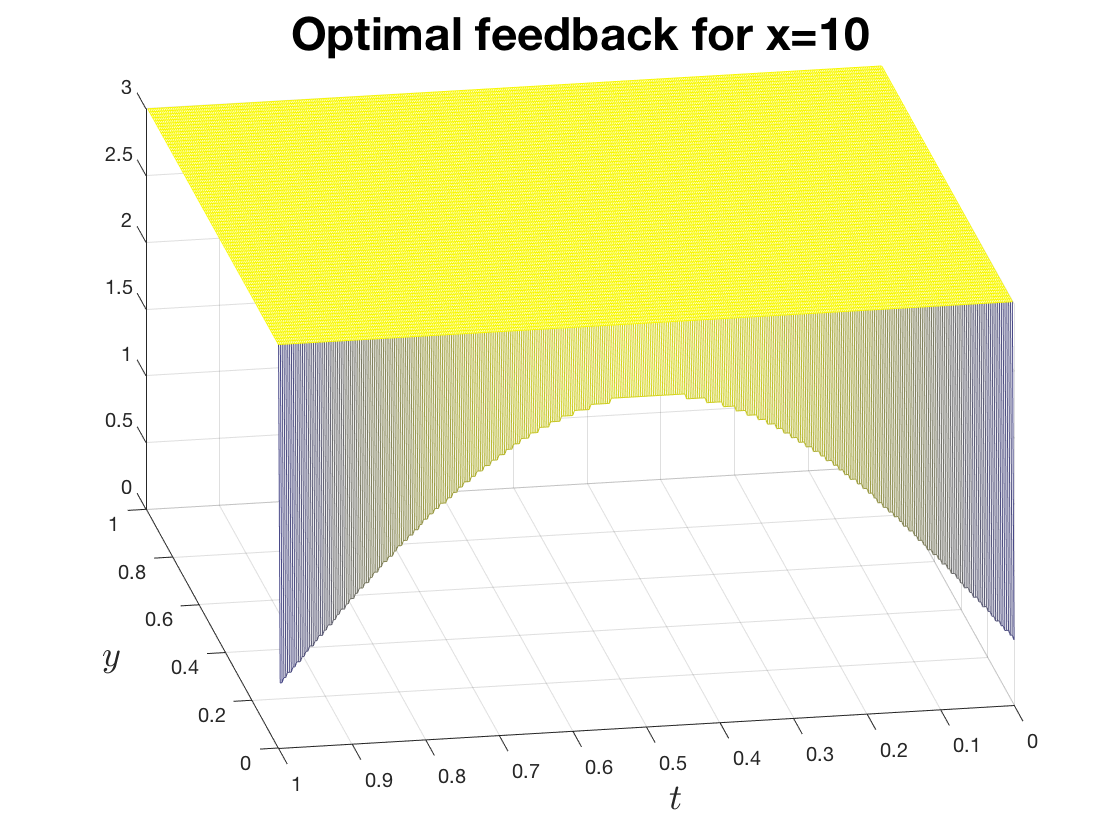}
\caption{Single dam model, price modeled as IGBM with $b(x) = 5-x$ and $\sigma(x) = 0.1 x$, maximal discharge rate $\bar u=3$ (assumption (H3) is satisfied). Top: numerical approximation of the value function $V$ at $t=0$. Bottom: optimal feedback as a function of $(t,y)$ for different values of  $x=0.5, 4,  5 ,10$ (from left to right).} \label{fig:1dam_IGBMprice}
\end{figure}

\begin{figure}
\includegraphics[width=0.5\textwidth]{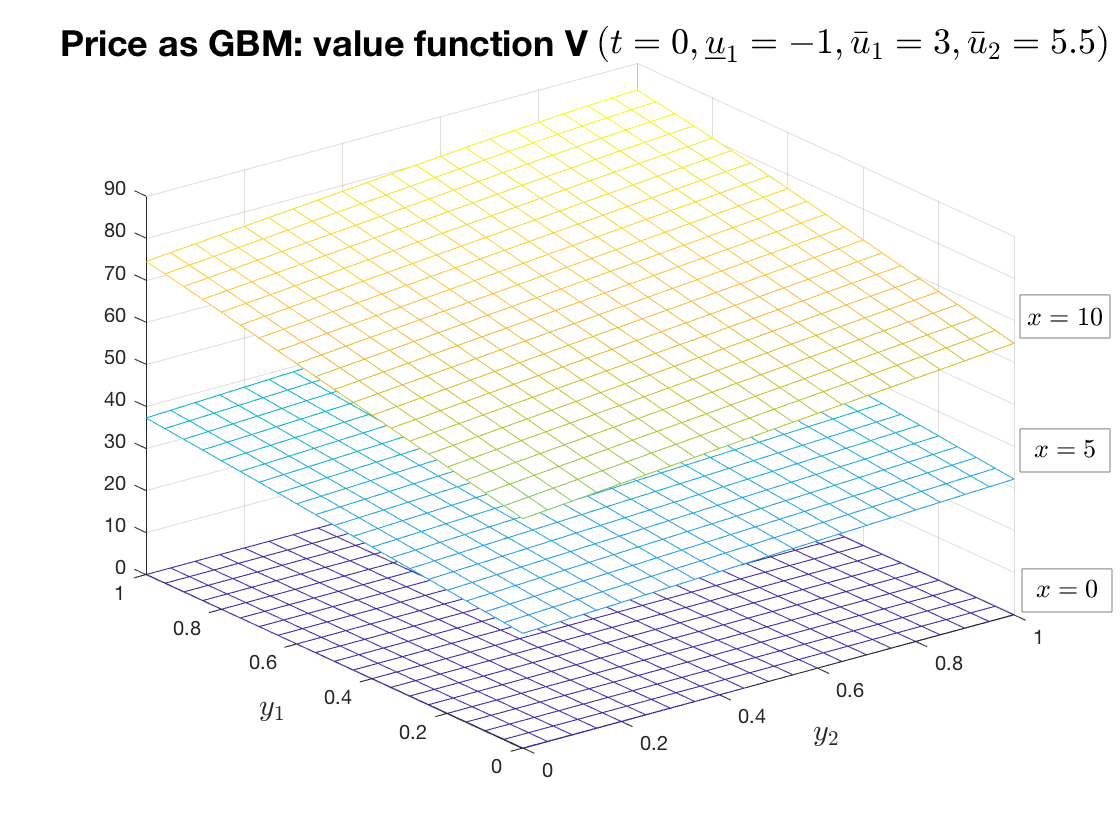}\\
\includegraphics[width=0.3\textwidth]{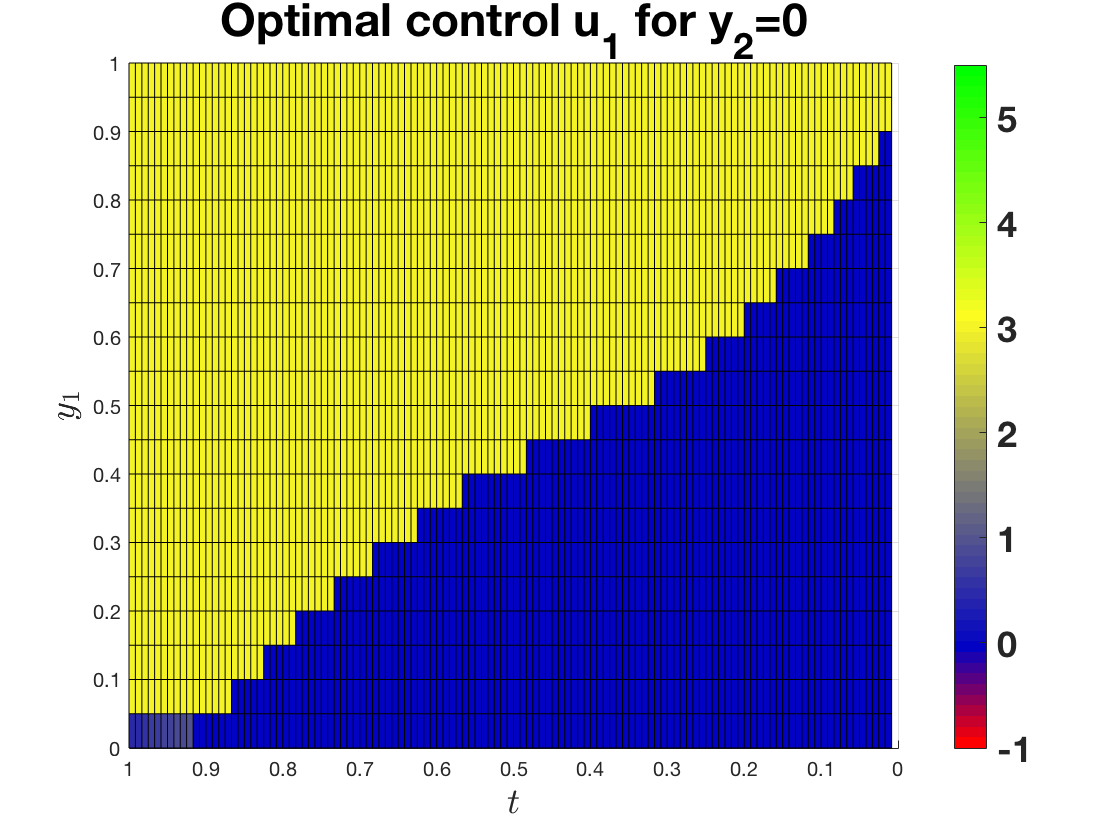}
\includegraphics[width=0.3\textwidth]{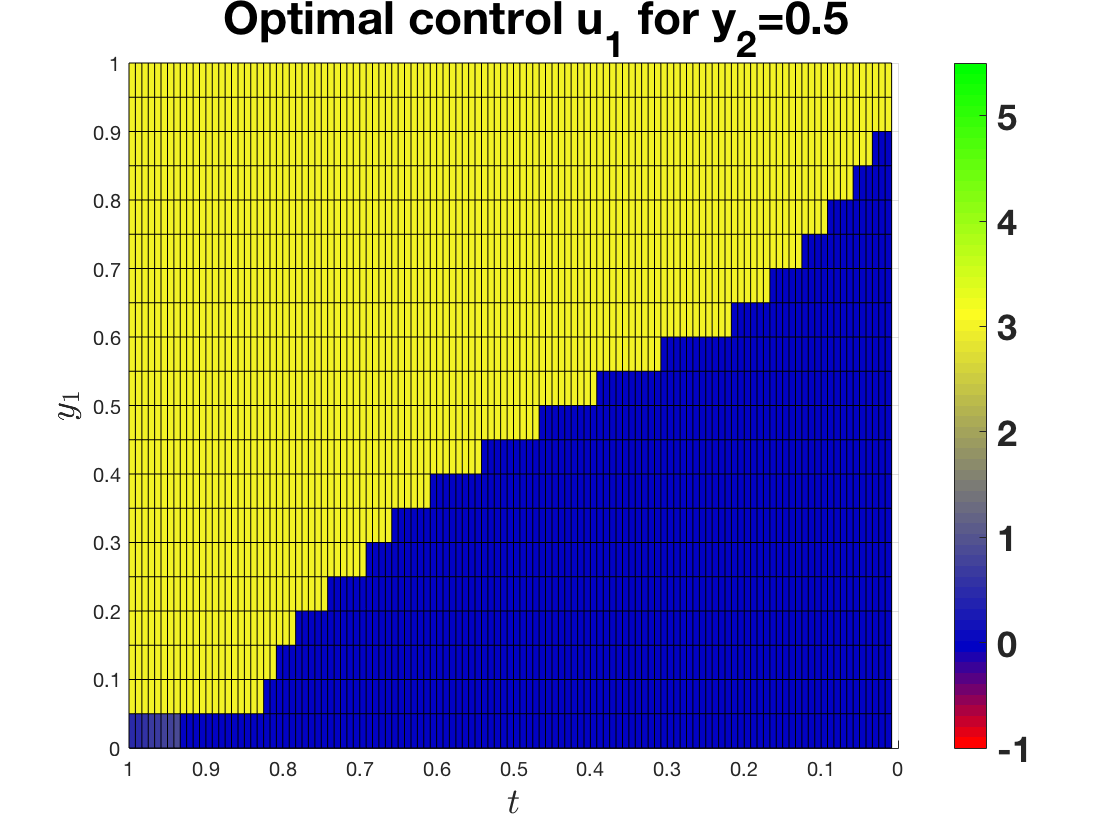}
\includegraphics[width=0.3\textwidth]{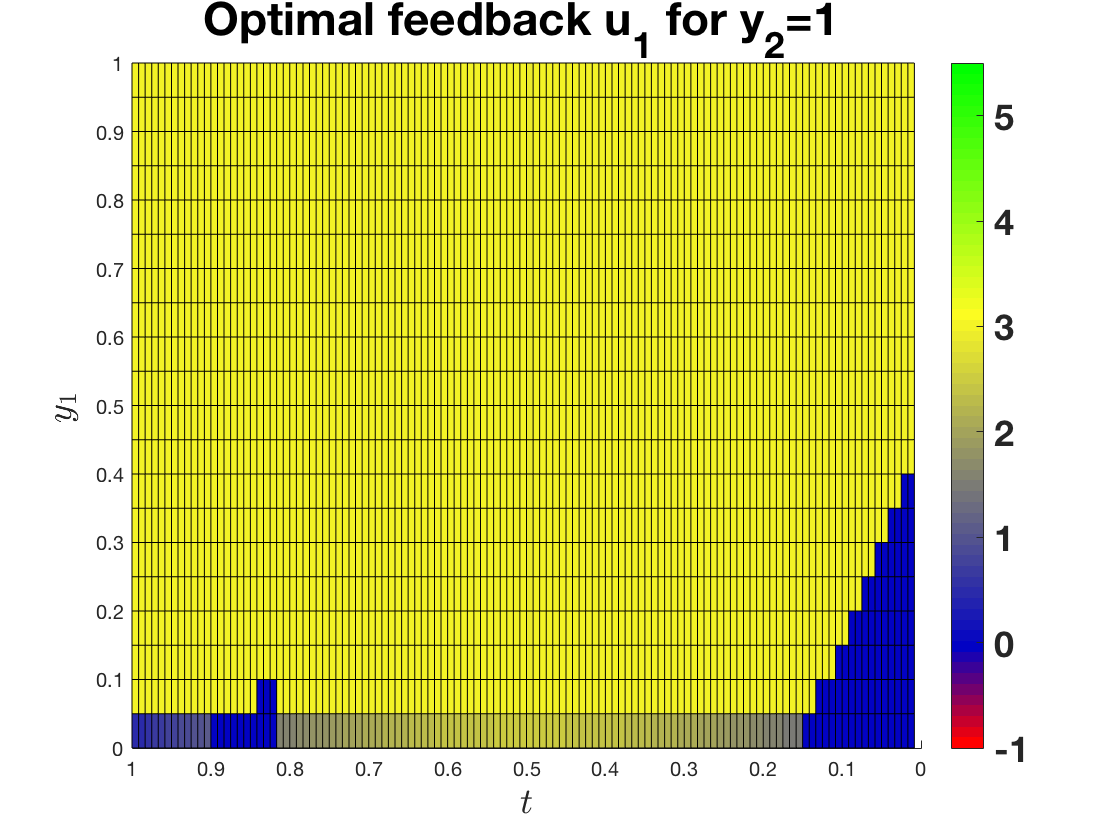}\\
\includegraphics[width=0.3\textwidth]{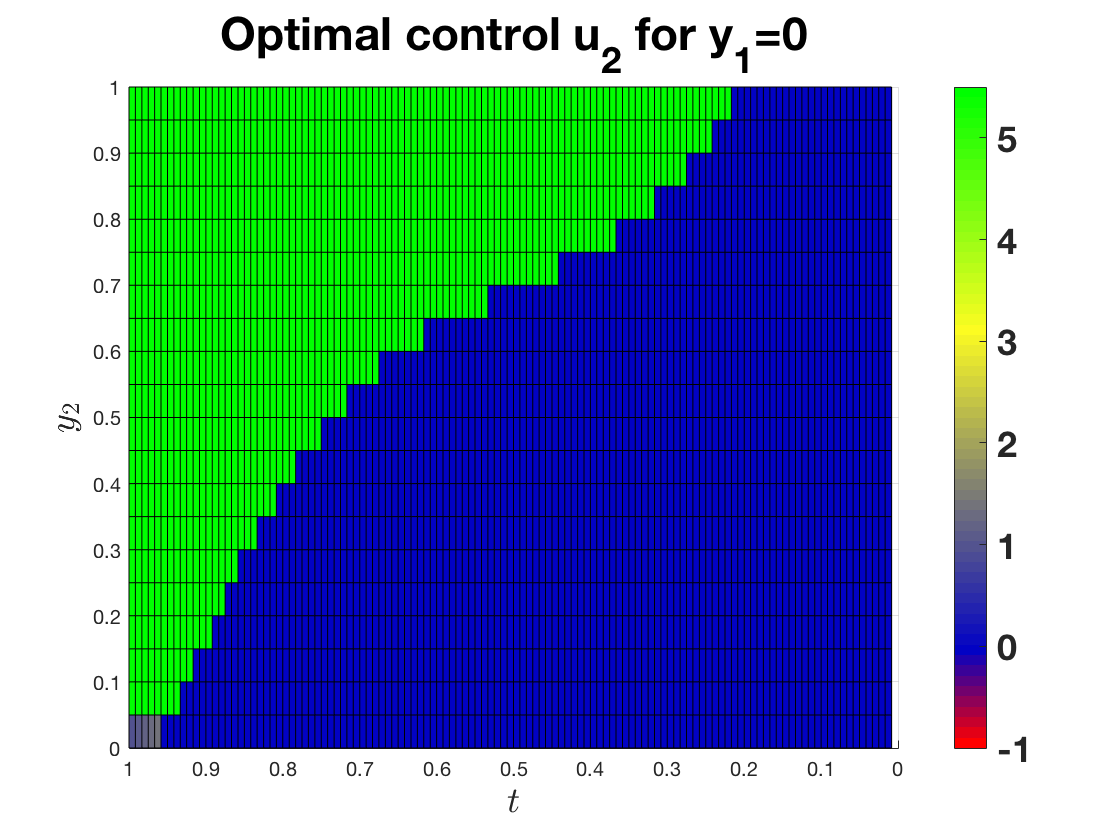}
\includegraphics[width=0.3\textwidth]{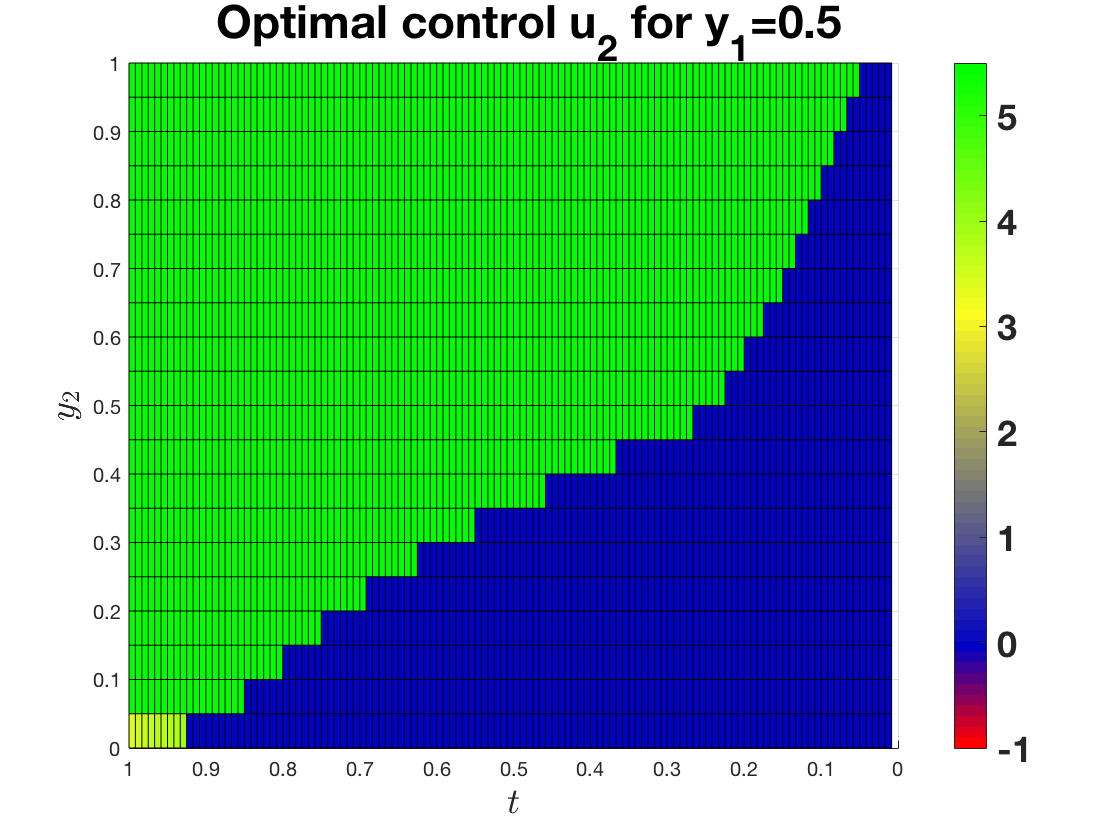}
\includegraphics[width=0.3\textwidth]{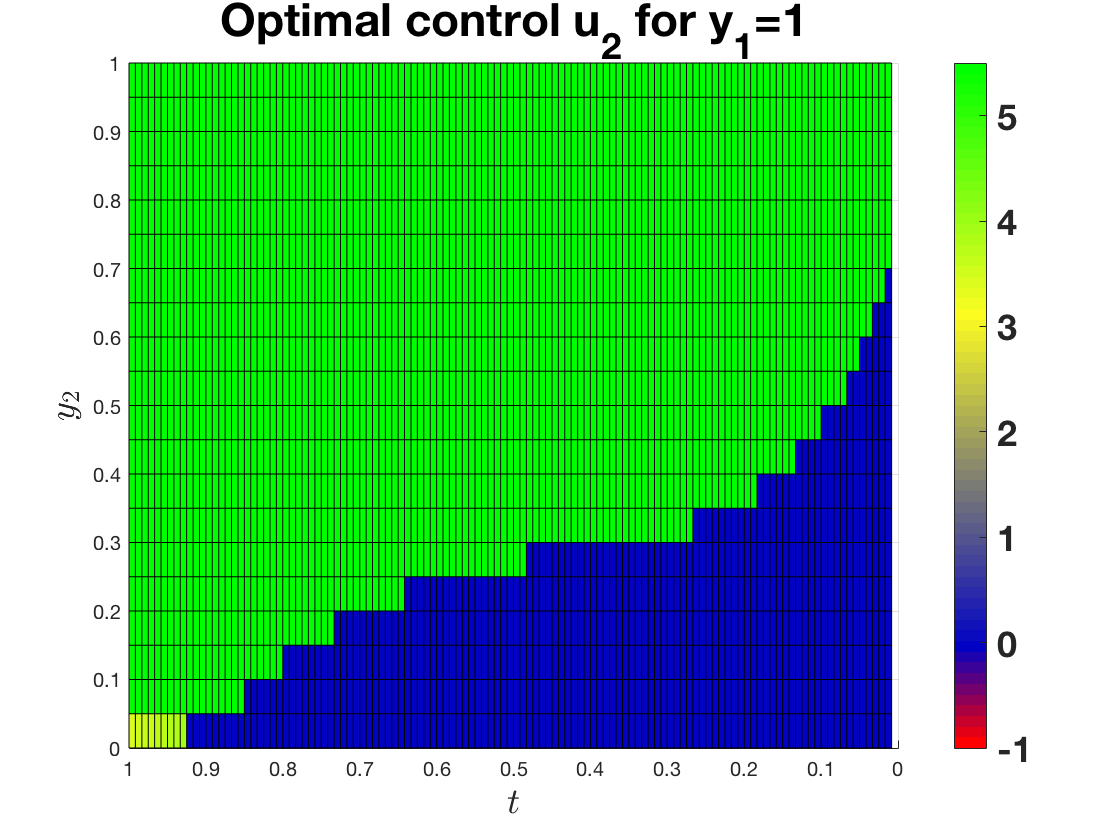}
\caption{Two dams, price modeled as GBM with $b(t,x)=0.05 x$ and $\sigma(t,x)=0.1 x$, maximal/minimal discharge rate $\underline u_1 = -1, \bar u_1=3$ and $\bar u_2 = 5.5$ (assumption (H3)' is satisfied). Top: numerical approximation of the value function $V$ at $t=0$. Second (resp. third) line: optimal feedback $u_1$ (resp. $u_2$) as a function of $(t,y_1)$ (resp. $(t, y_2)$) for different values of $y_2$ (resp. $y_1$).}
 \label{fig:2dam_price}
\end{figure}

\medskip

\begin{figure}
\includegraphics[width=0.5\textwidth]{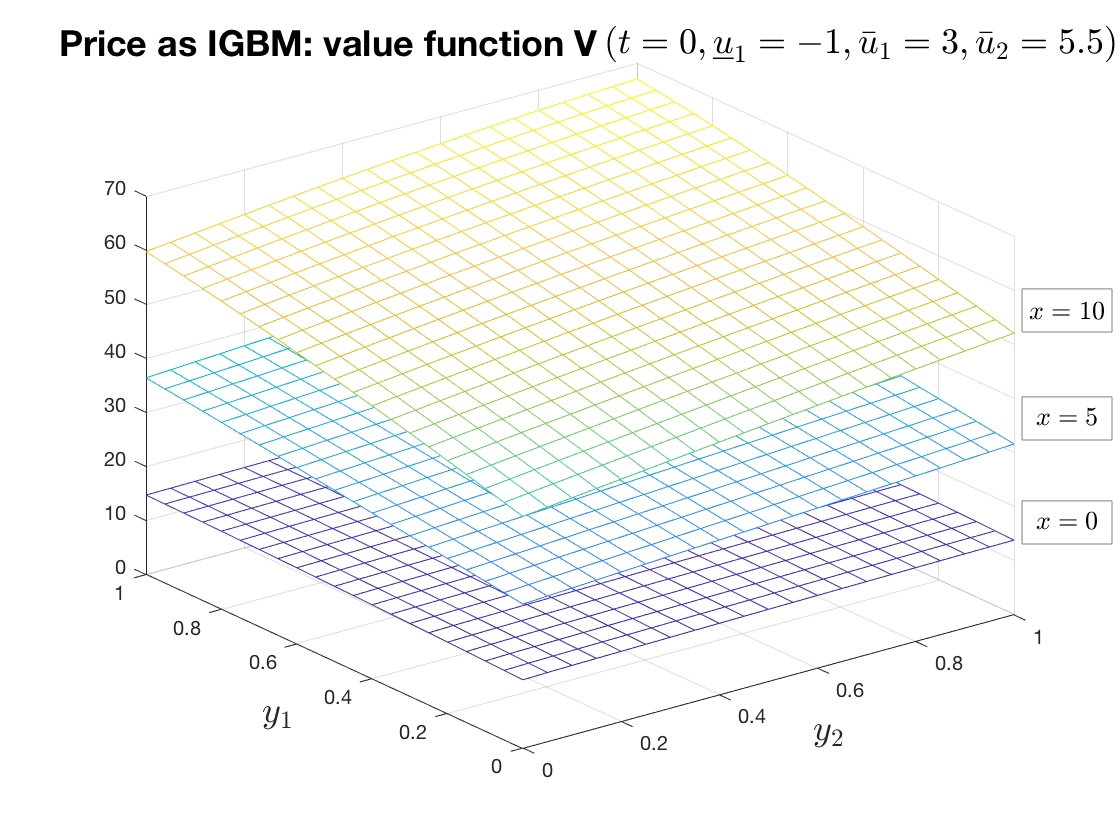}\\
\begin{minipage}[h]{0.08\textwidth}\vspace{-5cm}\fbox{$x=0.5$}\end{minipage}
\begin{minipage}[h]{0.9\textwidth}
\includegraphics[width=0.28\textwidth]{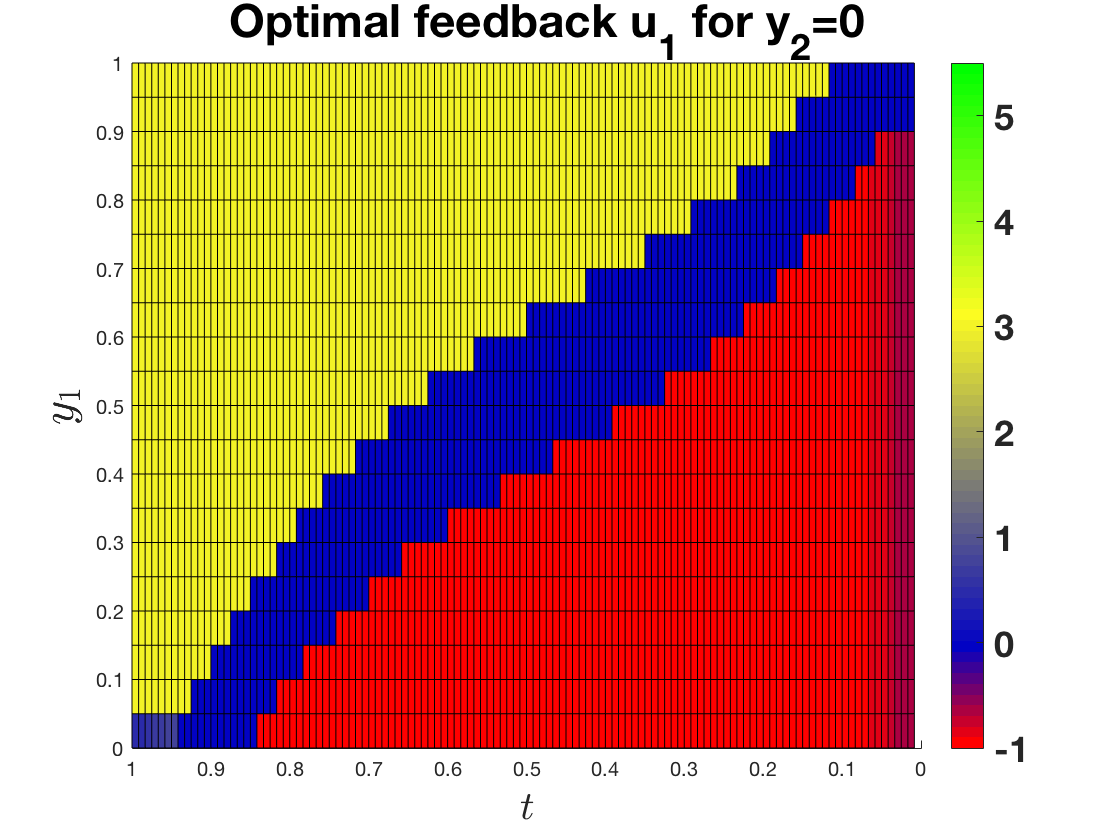}
\includegraphics[width=0.28\textwidth]{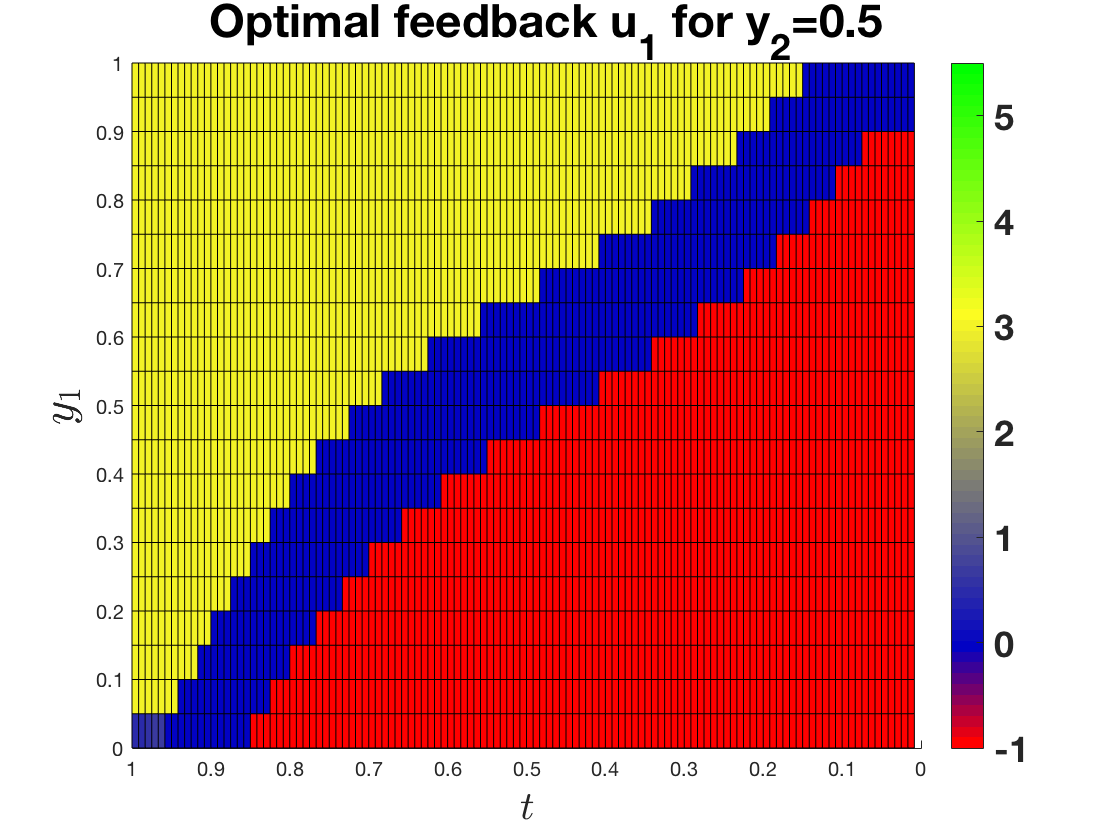}
\includegraphics[width=0.28\textwidth]{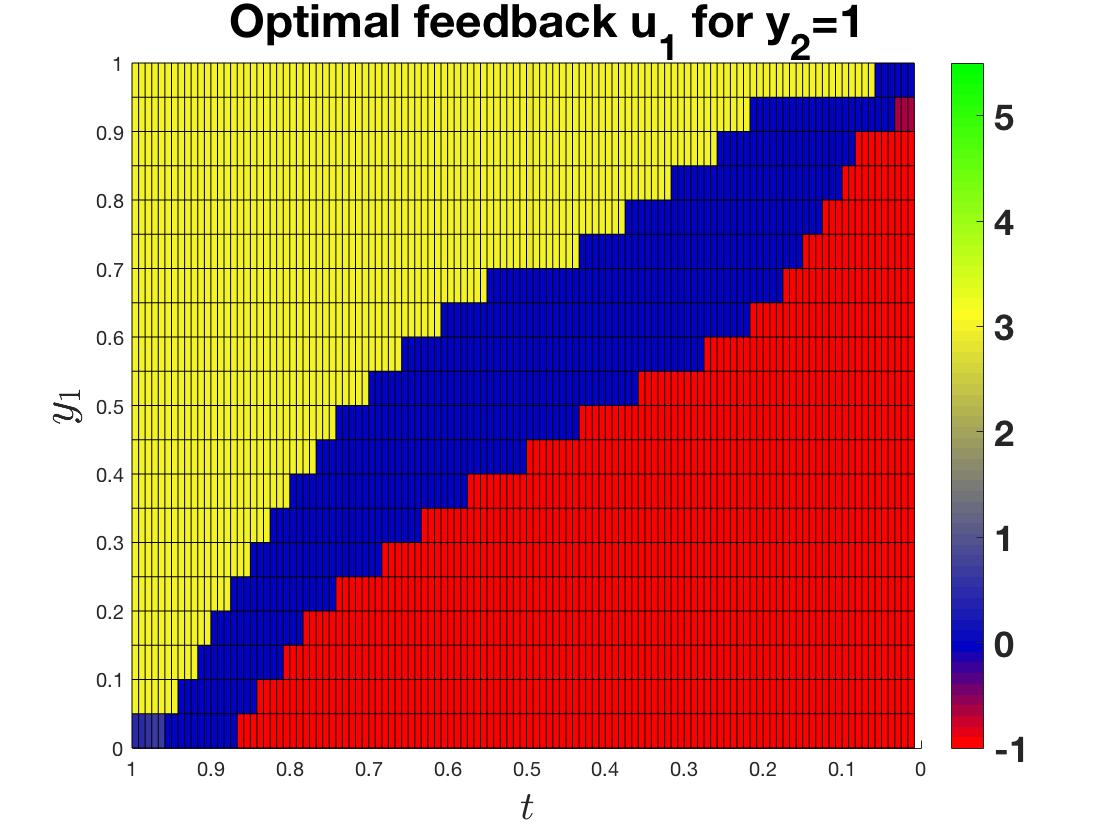}\\
\includegraphics[width=0.28\textwidth]{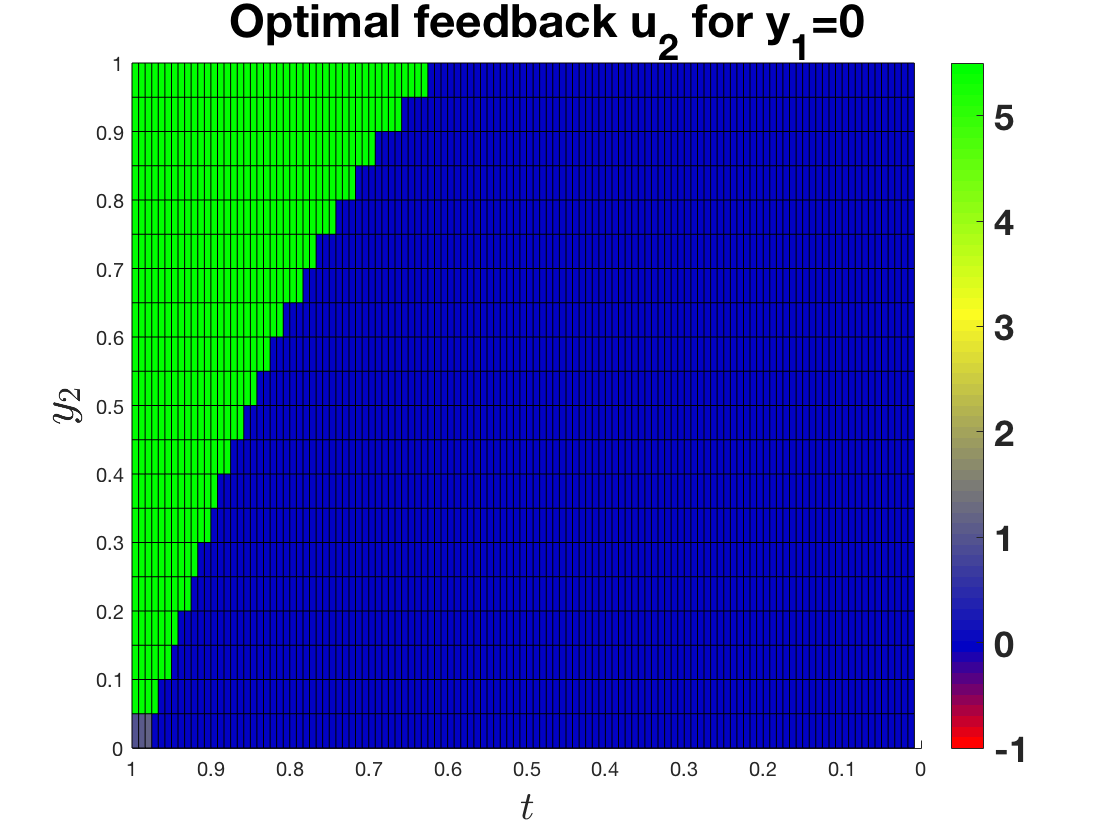}
\includegraphics[width=0.28\textwidth]{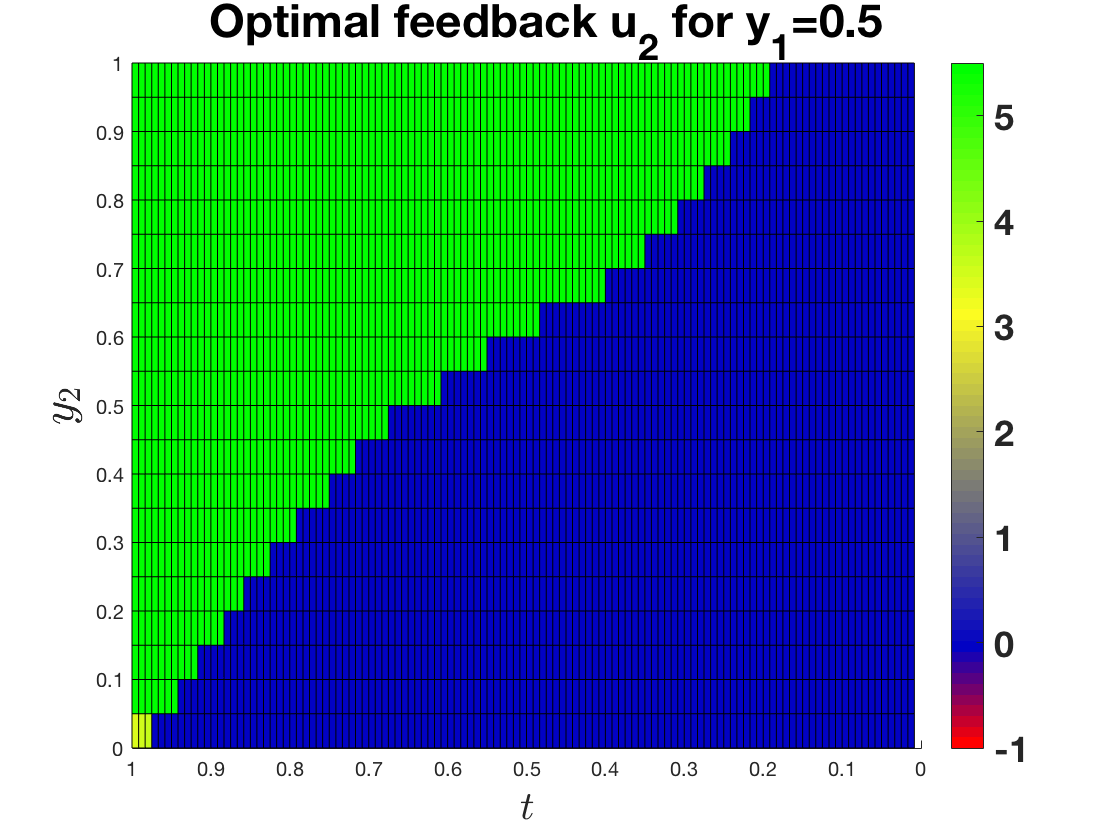}
\includegraphics[width=0.28\textwidth]{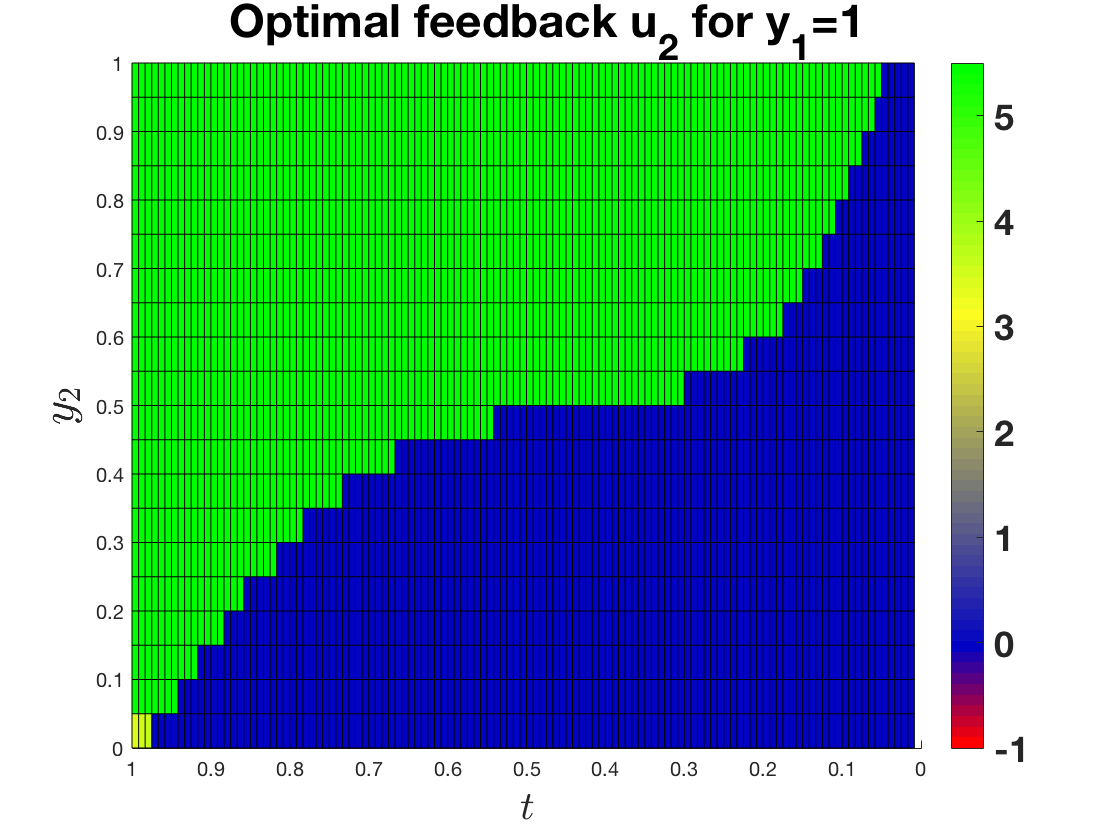}
\end{minipage}\\
\begin{minipage}[h]{0.08\textwidth}\vspace{-5cm}\fbox{$x=4$}\end{minipage}
\begin{minipage}[h]{0.9\textwidth}
\includegraphics[width=0.28\textwidth]{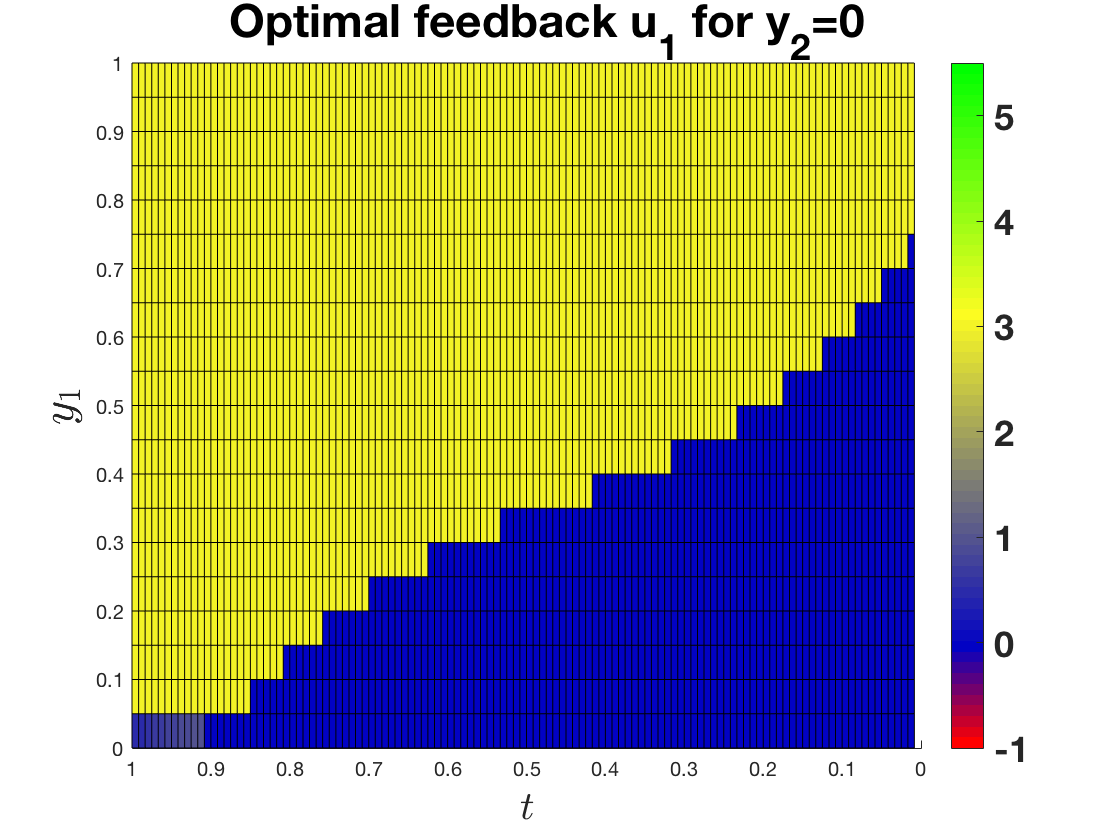}
\includegraphics[width=0.28\textwidth]{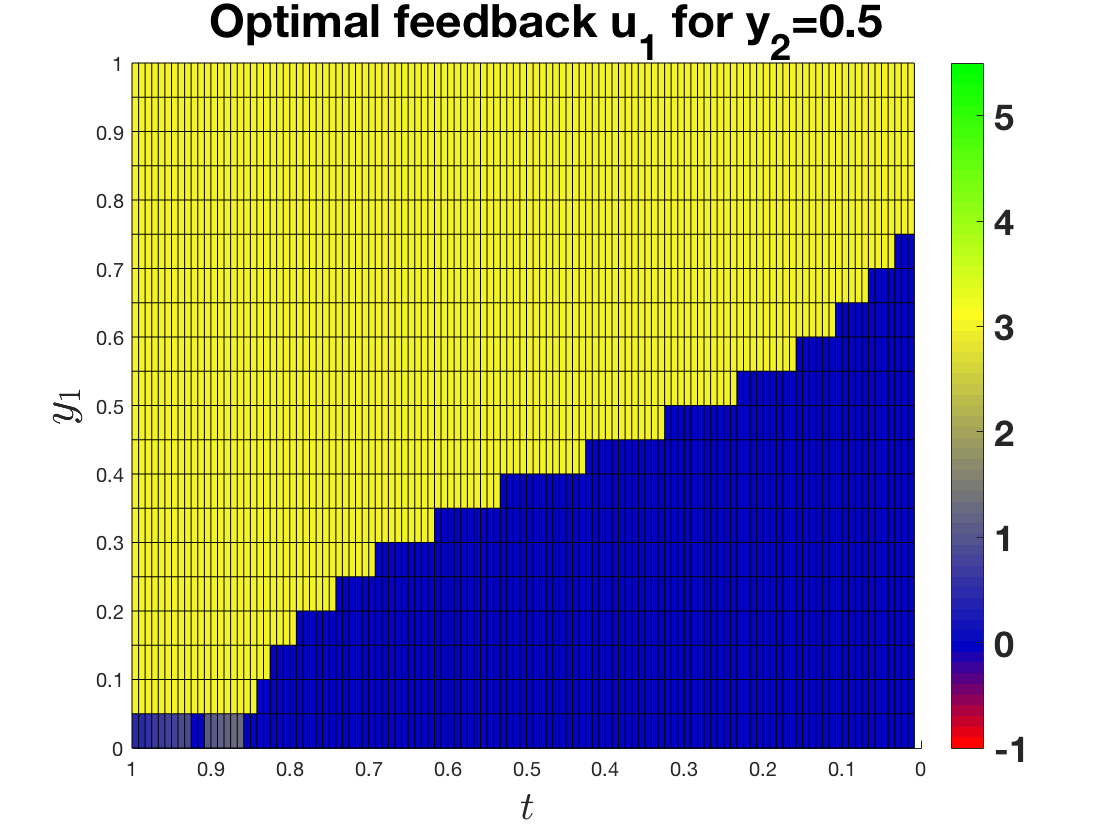}
\includegraphics[width=0.28\textwidth]{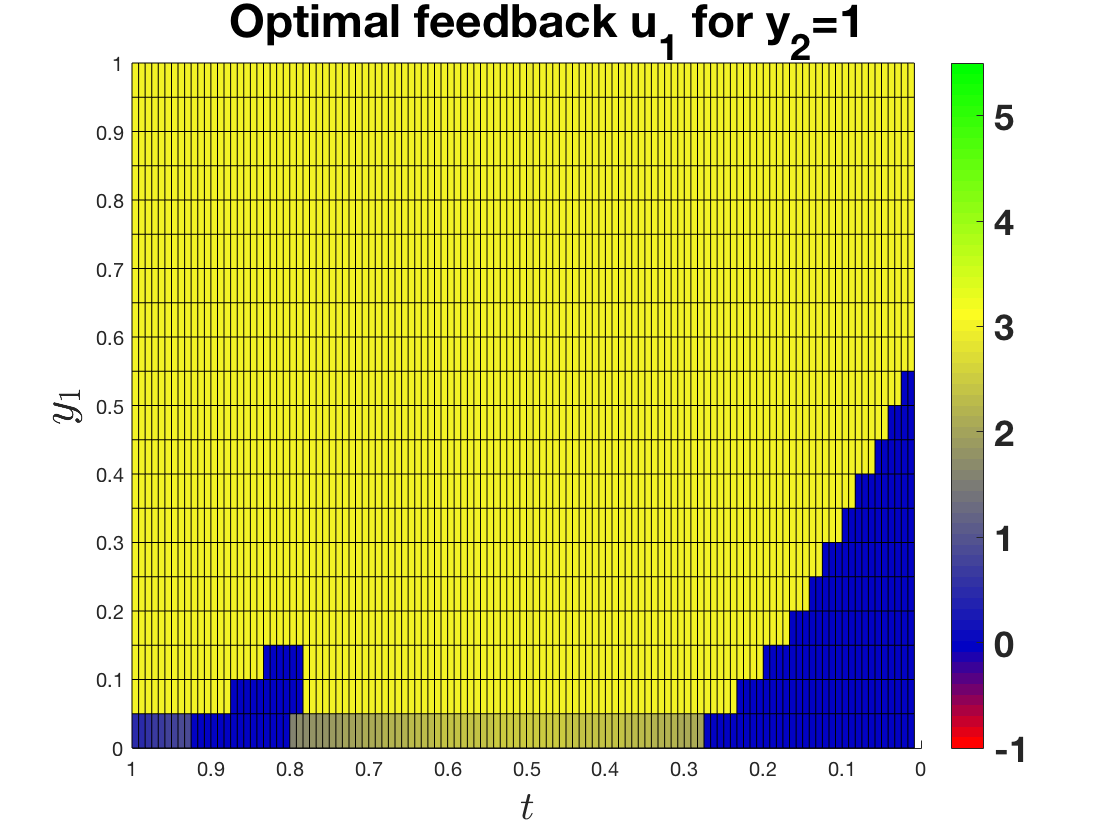}\\
\includegraphics[width=0.28\textwidth]{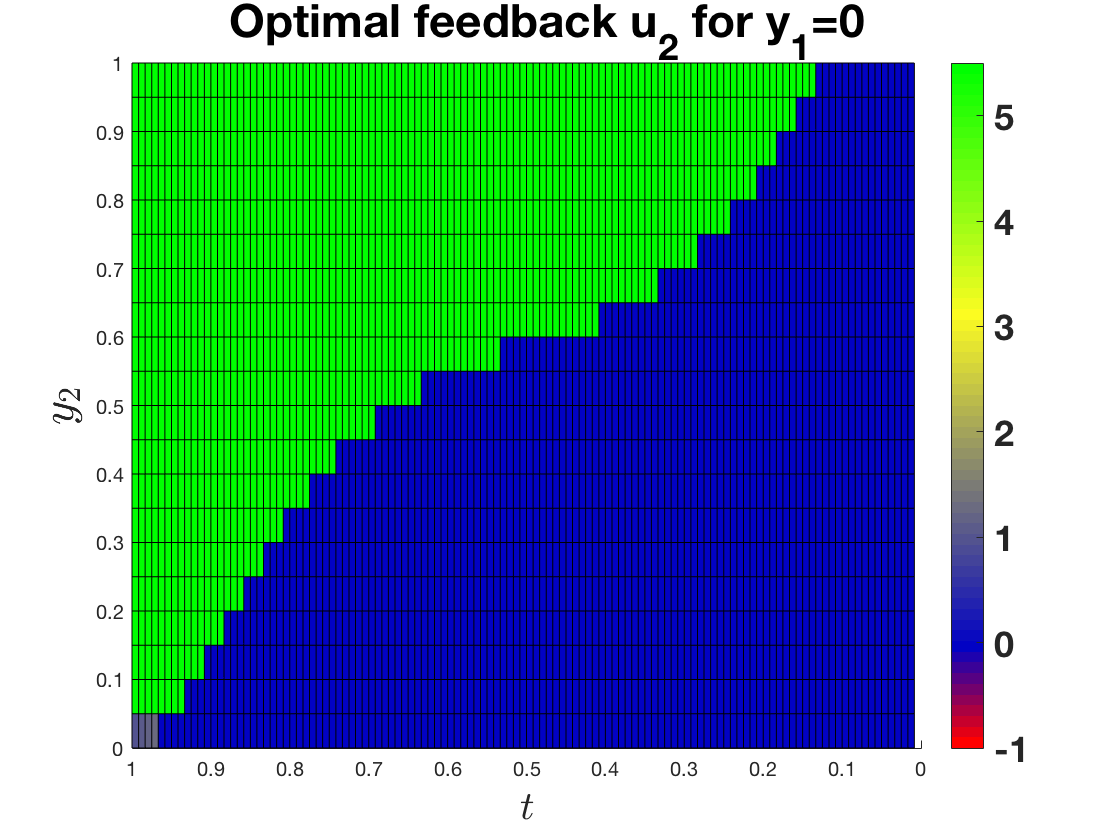}
\includegraphics[width=0.28\textwidth]{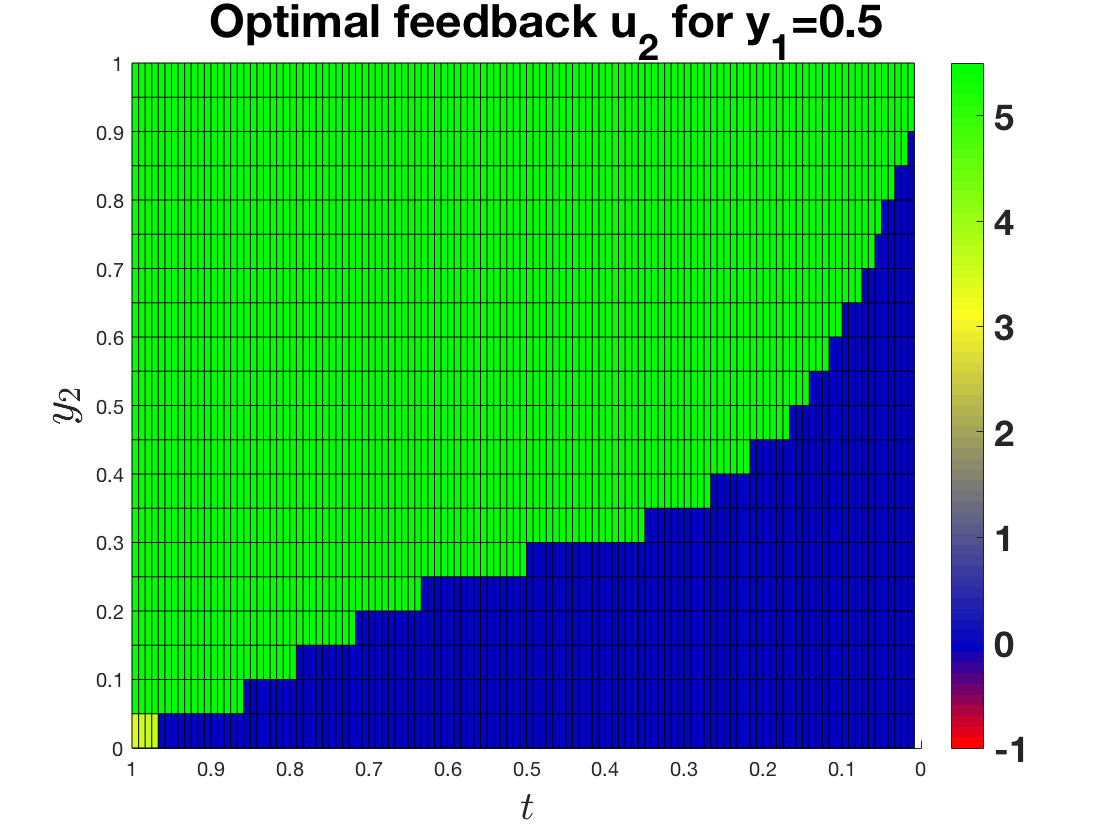}
\includegraphics[width=0.28\textwidth]{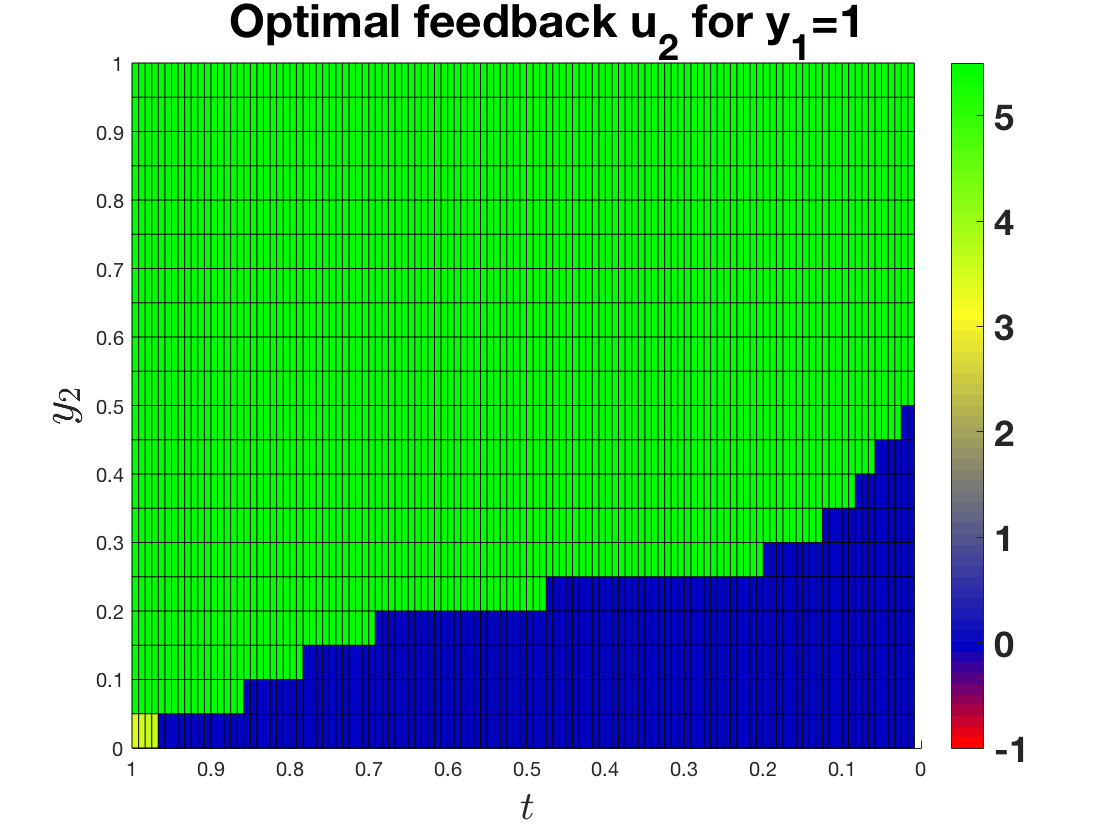}
\end{minipage}\\
\begin{minipage}[h]{0.08\textwidth}\vspace{-5cm}\fbox{$x=10$} 
\end{minipage}
\begin{minipage}[h]{0.9\textwidth}
\includegraphics[width=0.28\textwidth]{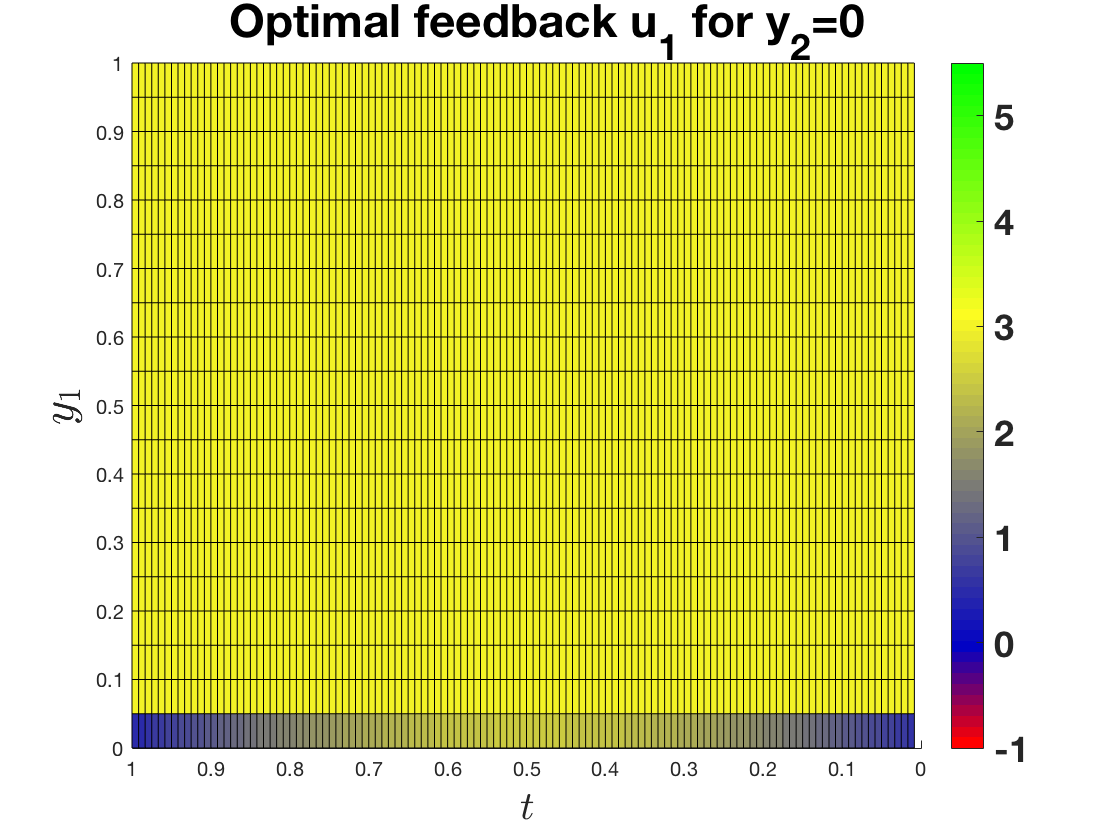}
\includegraphics[width=0.28\textwidth]{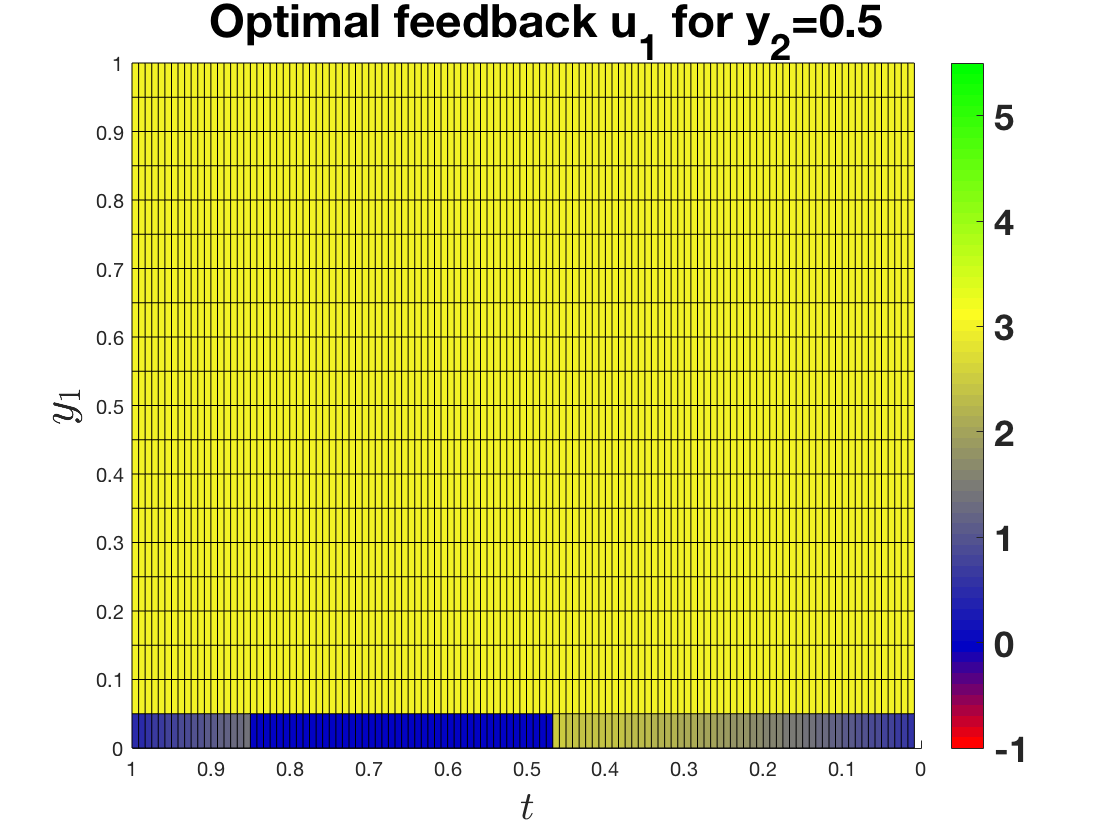}
\includegraphics[width=0.28\textwidth]{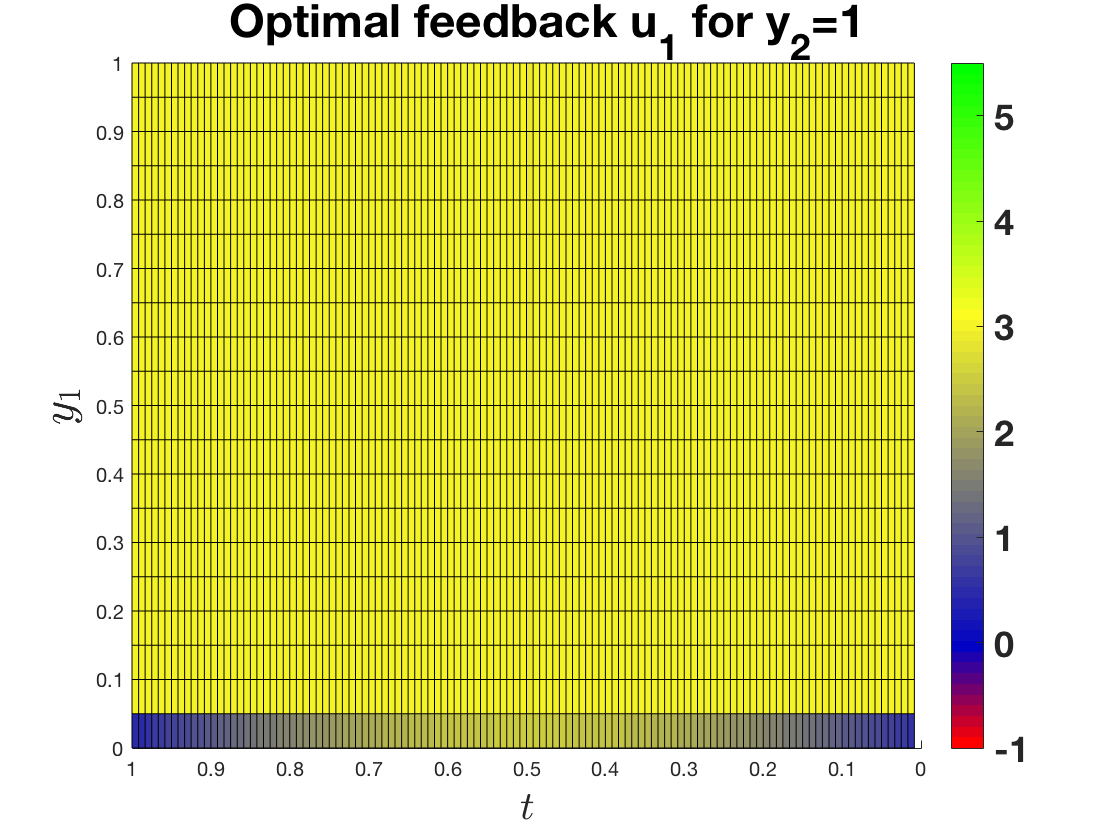}\\
\includegraphics[width=0.28\textwidth]{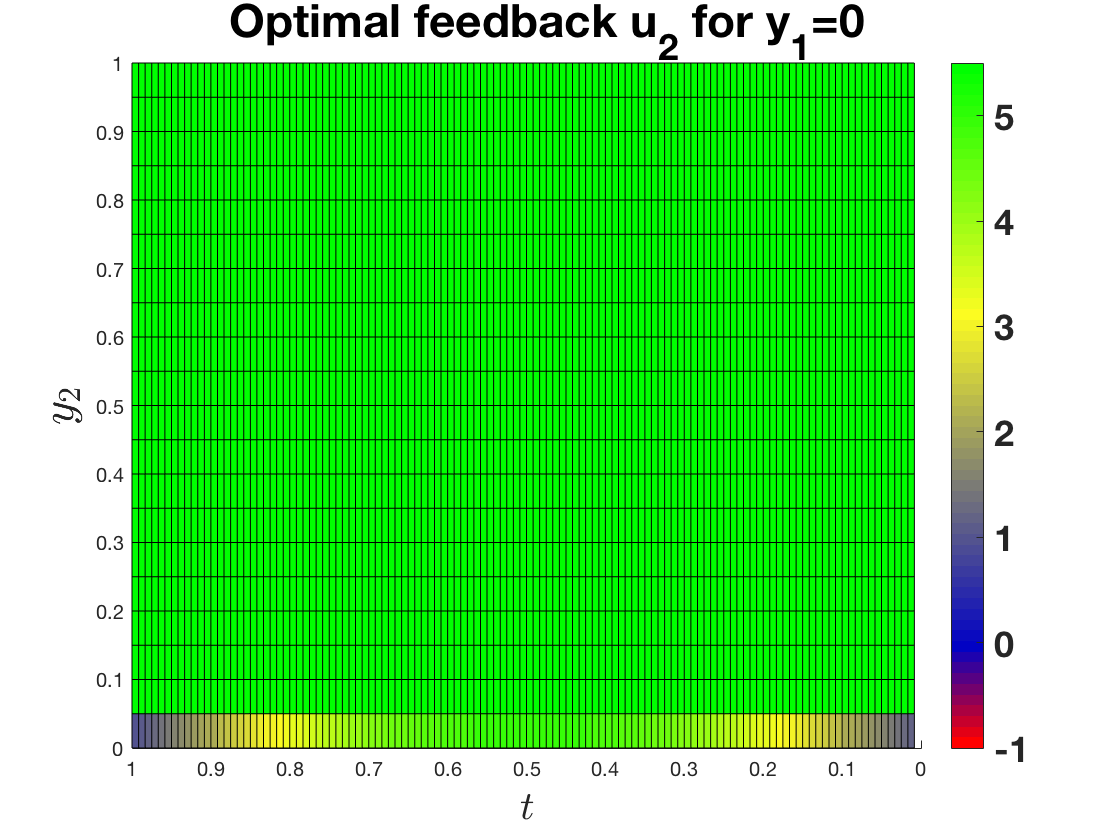}
\includegraphics[width=0.28\textwidth]{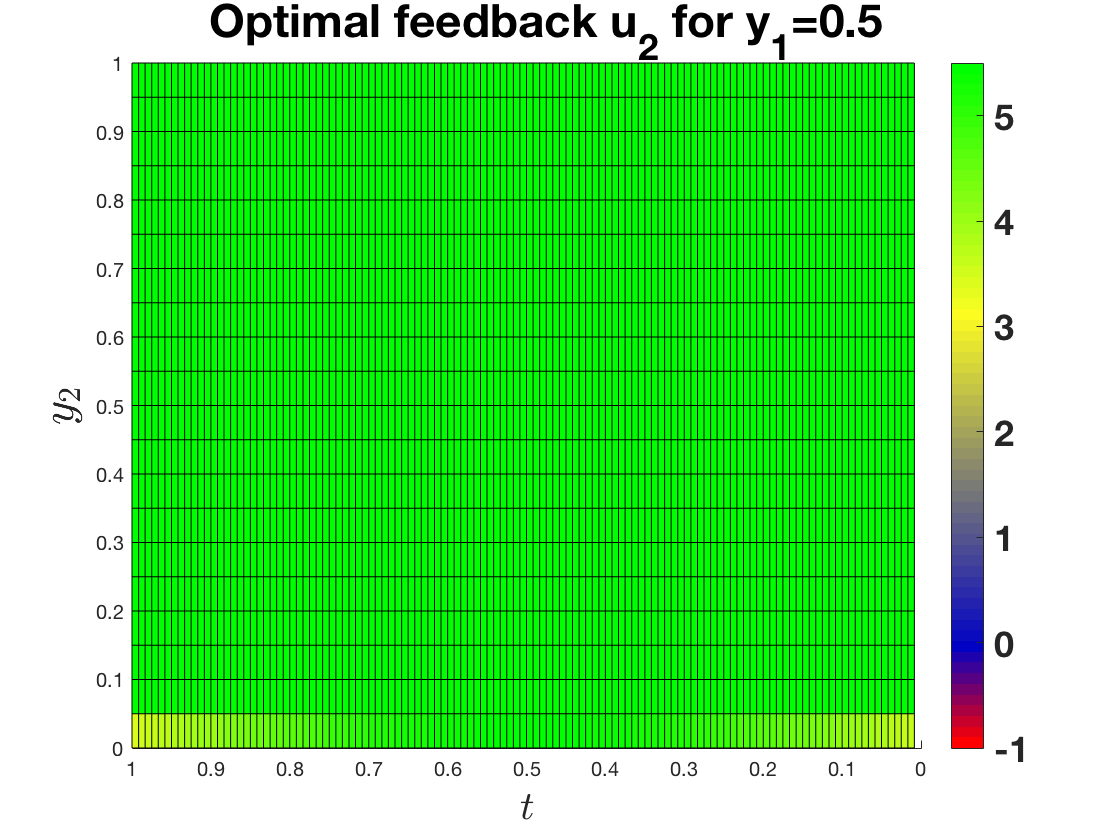}
\includegraphics[width=0.28\textwidth]{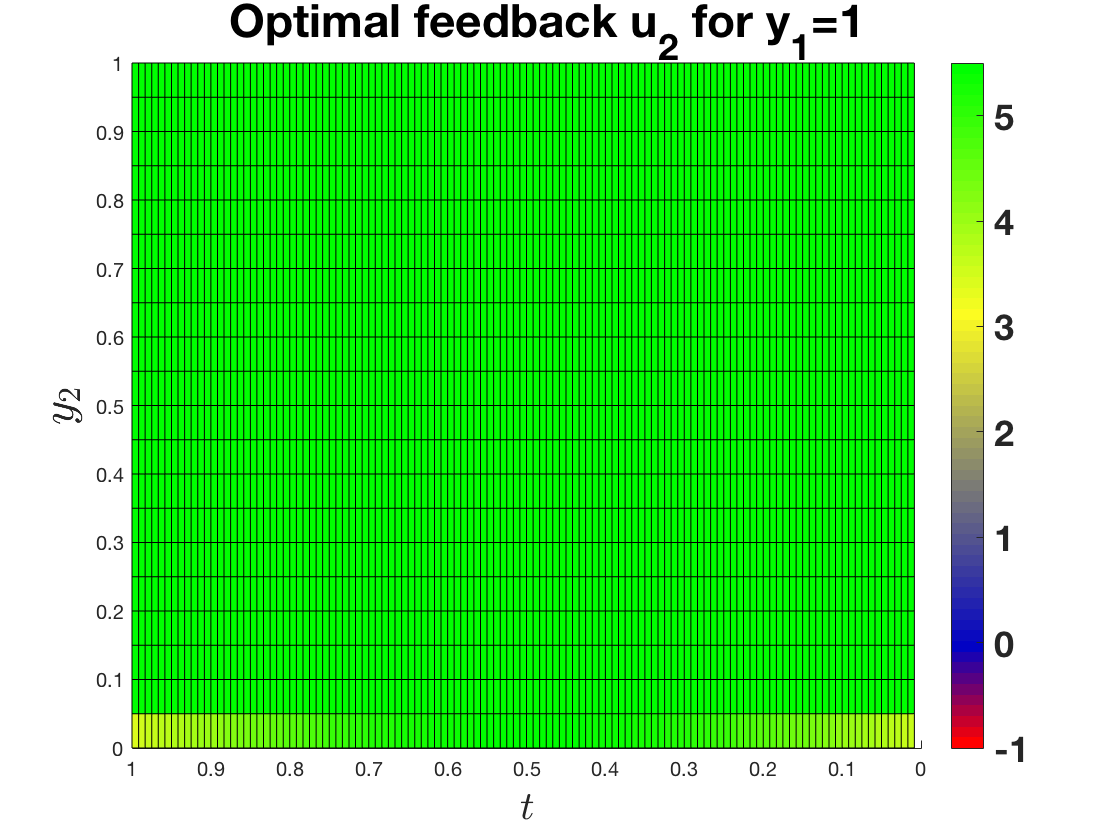}
\end{minipage}\\
\caption{Two dams, price modeled as IGBM with $b(t,x)=5 - x$ and $\sigma(t,x)=0.1 x$, maximal/minimal discharge rate $\underline u_1 = -1, \bar u_1=3$ and $\bar u_2 = 5.5$ (assumption (H3)' is satisfied). Top: numerical approximation of the value function $V$ at $t=0$. From second line to bottom: optimal feedbacks $u_1$ and $u_2$.}
 \label{fig:2dam_priceIGBM}
\end{figure}

\section{State constrained optimal control: controllability issues}\label{sec:issues}

The function $\beta$ is exogenously given and many structural reasons may prevent assumption (H3) to be satisfied. { A typical situation could be that the dam was built in the past with a maximum outflow level $\bar u$ as suitable to a given inflow intensity $\tilde \beta$ which used to be observed in the past, but nowadays, due to climate changes,  the inflow intensity  has changed from $\tilde \beta$ to $\beta$, which could potentially be greater than $\bar u$ for some given times $t \in [0,T]$. As an example, when the inflow intensity was $\tilde \beta(t) = \sin (\pi t) + 0.5$ for $t \in [0,1]$, the dam could have been built with $\bar u = 2$, thus satisfying Assumption (H3). If we now assume that the current inflow intensity is instead $\beta(t) = 2 \sin (\pi t) + 0.5$, we can easily see from Figure \ref{fig:beta} that Assumption (H3) is now violated} 

In order to treat the problem in its full generality we aim in what follows to remove assumption (H3). Considering again the simpler single dam model, let us then assume that $U=[0,\bar u]$ with possibly $\bar u < \max_{t\in [0,T]} \beta(t)$ (we keep $\beta(t)\geq 0, \forall t\in [0,T]$, for simplicity).
The first difficulty one faces in this scenario is to determine the maximal ``controllable region'', i.e. the set $\mathcal D_t\subseteq K$ such that 
$$
\Uad \neq \emptyset \Leftrightarrow y\in \mathcal D_t.
$$
Roughly  speaking $\mathcal D_t$ is the biggest subset of $K$ where at time $t$ the value function is well defined. 
Let us assume the function $\beta$ is { qualitatively as the one reported in Figure \ref{fig:beta}, i.e. such that} $\beta(t)\geq \bar u$ in the interval $[t^*, T^*]$, for some $0\leq t^*\leq  T^*\leq T$. 

\begin{prop}\label{prop:domain}
Let
$$
\hat y_t := \bar y - \max\left( \int^{T^*\vee t}_t (\beta(s)-\bar u)\mathrm d s, 0\right).  
$$
 Then, for any $t\in [0,T]$ one has 
$$
\Uad \neq\emptyset \Leftrightarrow y\in [0,\hat y_t].
$$
\end{prop}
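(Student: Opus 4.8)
The plan is to exploit the fact that the level dynamics \eqref{eq:reserve1} form a \emph{deterministic} controlled ODE: the Brownian motion does not enter, and the constraint $Y^{t,y,u}_s\in K$ is pathwise. Hence $\Uad\neq\emptyset$ if and only if there is a \emph{deterministic} measurable control $u:[t,T]\to[0,\bar u]$ for which the solution of $\dot Y=\beta-u$, $Y_t=y$, stays in $K=[0,\bar y]$ on $[t,T]$: an adapted admissible control yields, for a.e.\ $\omega$, such a deterministic one, and conversely a deterministic feasible control is trivially adapted. Writing $g(s):=\int_t^s(\beta(r)-\bar u)\,\mathrm dr$, the ``discharge as fast as possible'' control $u\equiv\bar u$ produces the pointwise-lowest trajectory $Z_s:=y+g(s)$, so that $Y^{t,y,u}_s\ge y+g(s)$ for every admissible $u$. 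Since $\beta\ge\bar u$ exactly on $[t^*,T^*]$, the map $g$ decreases on $[t,t^*]$, increases on $[t^*,T^*]$ and decreases again on $[T^*,T]$, whence $\max_{s\in[t,T]}g(s)=\max\big(g(t),g(T^*)\big)=\max\big(0,g(T^*)\big)$.

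For the necessity direction I would proceed as follows. Assuming $\Uad\neq\emptyset$, pick an admissible $u$; evaluating the lower bound $Y^{t,y,u}_s\ge y+g(s)$ at $s=T^*\vee t$ and using the state constraint $Y^{t,y,u}_{T^*\vee t}\le\bar y$ gives $y\le\bar y-\int_t^{T^*\vee t}(\beta-\bar u)$, while $Y^{t,y,u}_t=y\le\bar y$ gives $y\le\bar y$. Combining the two yields $y\le\bar y-\max\big(\int_t^{T^*\vee t}(\beta-\bar u),0\big)=\hat y_t$. For $t\ge T^*$ the forcing window has disappeared and only $y\le\bar y=\hat y_t$ survives.

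For the sufficiency direction, given $y\in[0,\hat y_t]$ I would exhibit an explicit admissible control: take $u_s=\bar u$ whenever the resulting level is strictly positive, and reduce it to $u_s=\beta(s)$ (feasible since $\beta<\bar u$ off $[t^*,T^*]$) so as to hold the level at $0$ otherwise, i.e.\ the Skorokhod reflection of $Z$ at the lower wall $0$. This control is admissible and keeps $Y\ge 0$ by construction, so the whole content of the argument is the cap $Y_s\le\bar y$. Here I would use the one-dimensional reflection formula $Y_s=\max\big(y+g(s),\,g(s)-\min_{t\le\tau\le s}g(\tau)\big)$, which splits the verification into two checks: the first term is $\le y+\max(0,g(T^*))\le\hat y_t+\max(0,g(T^*))=\bar y$ by the choice of $y$, while the second term is largest at $s=T^*$, where it equals $g(T^*)-g(t^*)=\int_{t^*}^{T^*}(\beta-\bar u)$.

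The main obstacle is exactly this last quantity. Bounding the first term only uses $y\le\hat y_t$ and is routine; the second term, however, encodes the genuine coupling of the two constraints, since once the level has been forced down to $0$ to respect $Y\ge 0$, the uncontrollable rise on $[t^*,T^*]$ may by itself threaten the cap $\bar y$. The estimate that closes the proof is that this forced-filling volume fits the empty reservoir, $\int_{t^*}^{T^*}(\beta-\bar u)\le\bar y$, which is precisely the controllability condition ensuring $\mathcal D_{t^*}\neq\emptyset$ and is clearly met for the shape of $\beta$ in Figure~\ref{fig:beta}; note that for $t\in[t^*,T^*]$ any reflection occurs only after the peak at $T^*$, so the difficulty is confined to $t<t^*$. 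I would therefore isolate this forced-filling bound as a preliminary lemma, making explicit where the assumed profile of $\beta$ is used, and then assemble the two inclusions into the stated equivalence.
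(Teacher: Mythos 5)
Your argument follows essentially the same route as the paper's: the necessity direction is identical (evaluate the state constraint at $s=t$ and at $s=T^*\vee t$ using the pointwise lower bound given by full discharge), and your reflected control is, up to the interval $[T^*,T]$, exactly the paper's $\hat u$, which discharges at rate $\bar u$ until the level first reaches $0$ at time $\tau$ and then holds it there by matching the inflow until $t^*$. The added value of your write-up is the explicit reflection formula and, above all, the observation that verifying the upper constraint genuinely requires the forced-filling bound $\int_{t^*}^{T^*}(\beta(s)-\bar u)\,\mathrm{d}s\le\bar y$: once the level has been pinned at $0$ before $t^*$, one gets $Y_{T^*}=\int_{t^*}^{T^*}(\beta(s)-\bar u)\,\mathrm{d}s$, and this is not controlled by the hypothesis $y\le\hat y_t$ alone, since the pre-bump deficit $\int_t^{t^*}(\beta(s)-\bar u)\,\mathrm{d}s<0$ can make $\hat y_t$ large (even equal to $\bar y$) while the bump alone overfills an empty reservoir. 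The paper's proof dismisses this check with ``thanks to the construction of $\hat u$'', so your preliminary lemma is not pedantry but a genuinely needed standing assumption (equivalent to $\hat y_{t^*}\ge 0$, i.e. $\mathcal D_{t^*}\neq\emptyset$); without it the sufficiency direction, and hence the stated equivalence, fails for $t<t^*$. It does hold for the $\beta$ and $\bar u$ used in Figures \ref{fig:beta} and \ref{fig:domain}, which is presumably why it is left implicit. A minor further point in your favour: the paper's case split ``let $t^*\le t<T^*$'' is evidently a slip for $t<T^*$, since its stopping time $\tau$ lives in $[t,t^*]$; your remark that for $t\in[t^*,T^*)$ the reflection is never active (the level only rises from $y\ge0$) disposes of that subcase cleanly.
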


\begin{proof}
The result is straightforward if  $t\geq T^*$. Let $t^*\leq t < T^*$.  If there exists a control $u\in \mathcal U$ such that $Y^{t,y,u}_s\in [0,\bar y], \forall s\in [t,T]$ a.s., this certainly implies (taking $s=t$) that $y\in [0,\ymax]$. Moreover, taking $s=T^*$ (recalling we are in the case $t<T^*$)   it also gives
$$
y  \leq \ymax  -  \int^{T^*}_t (\beta(r) - u_r) \mathrm d r \leq \ymax  -  \int^{T^*}_t (\beta(r) - \bar u) \mathrm d r.
$$
On the other hand, let $y\in [0,\hat y_t]$: defining the control 
$$
\hat u_r := \bar u\mathbbm 1_{[t,\tau)\cup [t^*,T^*)} + \beta(r) \mathbbm 1_{[\tau,t^*)\cup [T^*,T]} \quad a.s., 
$$
where $\tau = \inf\{r\in [t, t^*] : Y^{t,y,\bar u}_r <0 \}\wedge t^*$,
one has $\hat u\in \mathcal U$ (because $\beta(r)\in U$ for $r\notin [t^*,T^*]$) and $Y^{t,y,\hat u}_s\in K \,\forall s\in [t,T]$ a.s. thanks to the construction of $\hat u$.

\end{proof}

\begin{figure}
\includegraphics[width=0.45\textwidth]{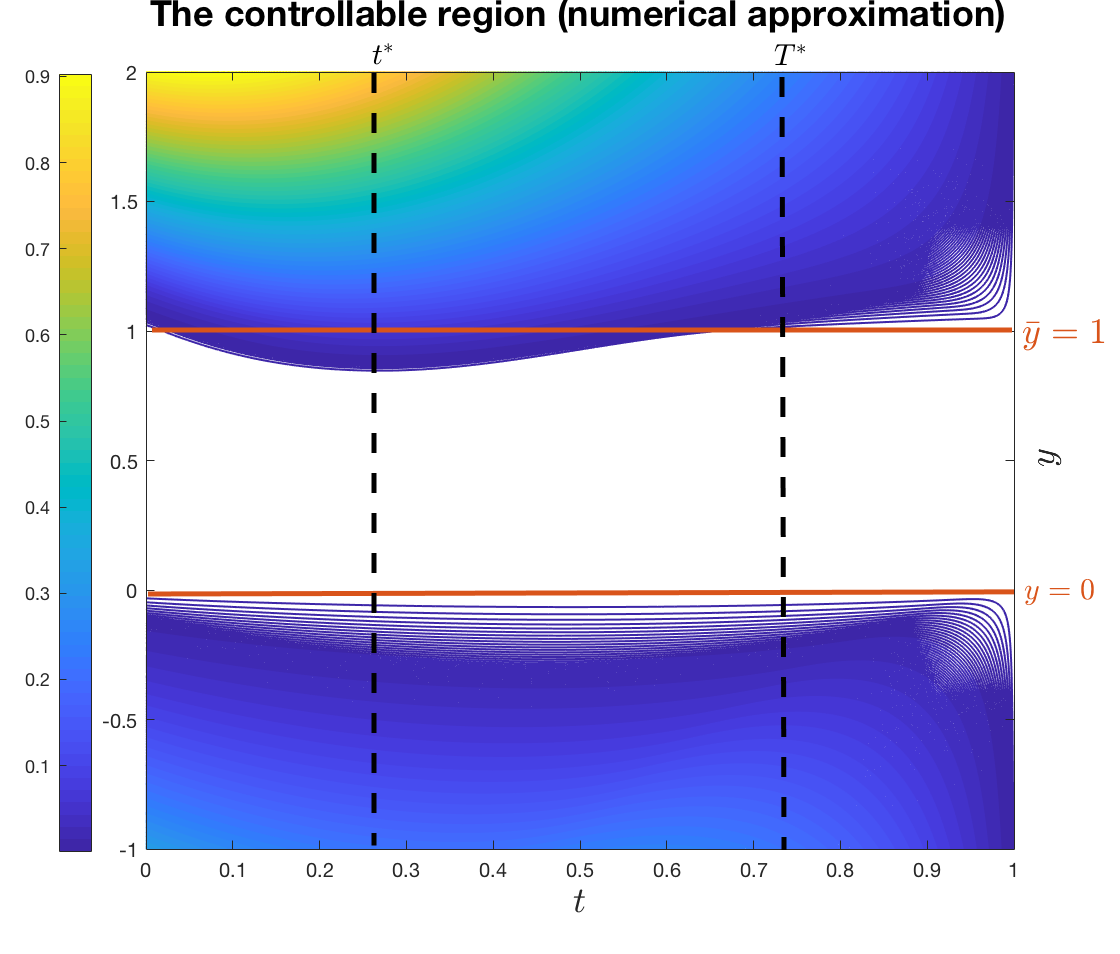}
\includegraphics[width=0.45\textwidth]{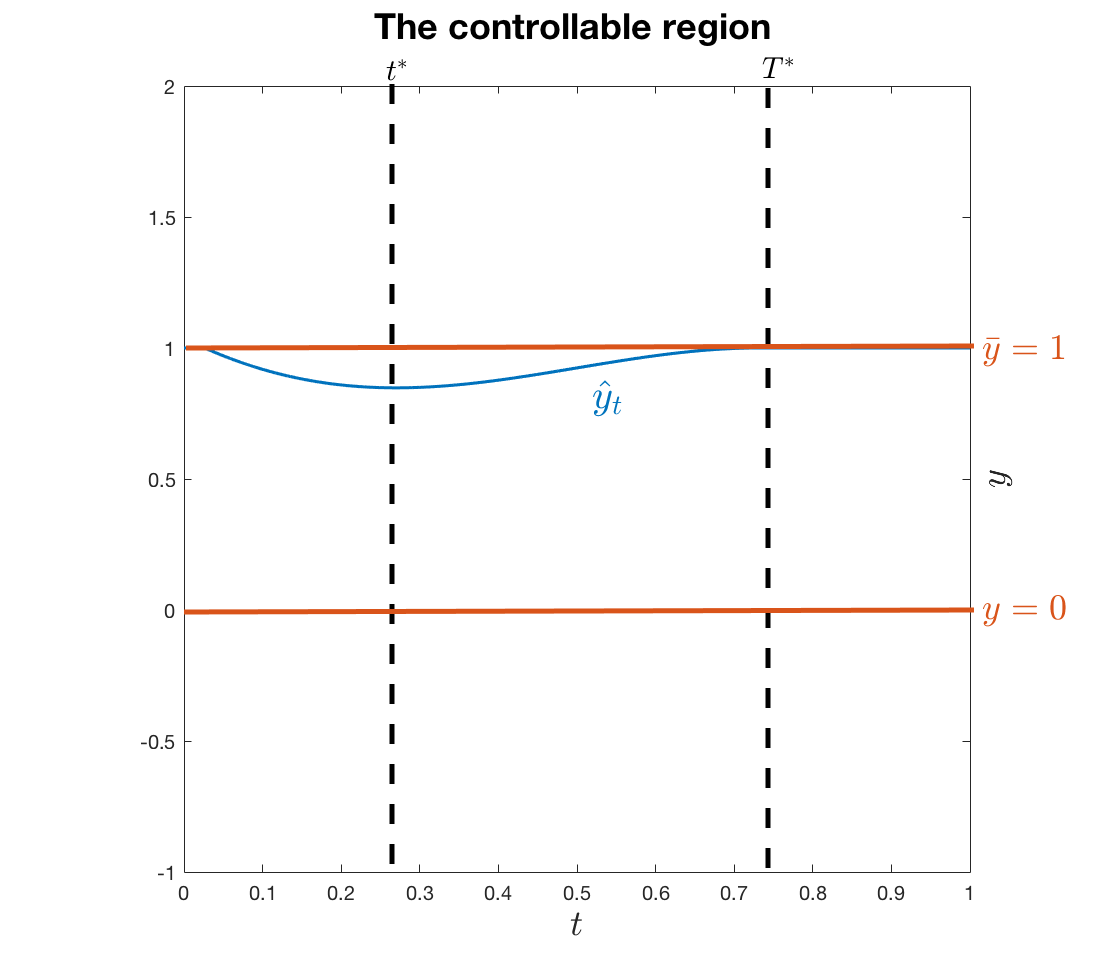}
\caption{The set $\{\mathcal D_t, t\in [0,T]\}$ with $\beta$ as in Figure \ref{fig:beta}. Left: level sets of the function $\vartheta$ (the white region corresponds to the set of level zero). Right: plot of the function $\hat y_t$ explicitly defining $\mathcal D_t$ in Proposition \ref{prop:domain}.}\label{fig:domain}
\end{figure}

Finding explicitly the sets $\{\mathcal D_t, t\in [0,T]\}$  can be in general very complex. An alternative characterization of $\{\mathcal D_t, t\in [0,T]\}$ can be obtained using the so called \textit{level set approach}, see e.g. \cite{BCS14, BFZ10, BPZ15, FGL94, KV06}. Indeed, it is possible to define a function  $\vartheta:[0,T]\times \R\to\R$ such that for any $t\in [0,T]$
\be\label{eq:level}
y\in \mathcal D_t \Leftrightarrow \vartheta(t,y)=0.
\ee
 A possible choice of $\vartheta$ is
$$
\vartheta (t,y) = \inf_{u\in \mathcal U} \int^T_{t}  d_K(Y^{t,y,u}_s) \mathrm d s,
$$
where  $d_K(y)$ denotes the (positive) distance of $y$ to the set $K$.
%
Now, $\vartheta$ is the value function of an unconstrained optimal control problem  fully characterized by a first order HJB equation.
Figure \ref{fig:domain} (left) shows the level sets of the numerical approximation of the function $\vartheta(t,\cdot)$ for $t\in [0,T]$. In particular, the white region approximates the set of level zero and therefore represents a numerical approximation of the region $\{\mathcal D_t, t\in [0,T]\}$. Of course, the choice of $\vartheta$ is not unique and many different functions may provide the characterization given by \eqref{eq:level} (see for instance the discussion in \cite{BCS14}).

Once defined the set $\mathcal D_t$, set which is in general not trivial, as we have seen, one has to deal with an optimal control problem settled in a time-dependent domain. For a problem of this form, the regularity properties of the value function on its domain of definition, its HJB characterization as well as its numerical approximation are very complex and non standard in optimal control theory.  This motivates the study presented in the next section where, to avoid the direct treatment of the state constrained optimal control problem by dynamic programming techniques, we pass by a suitable reformulation of the problem that eliminates this difficulty.

\section{Treatment of the state constraints without controllability assumptions}\label{sec:2dam}
Let us go back to the  model with two reservoirs. Motivated by the discussion in the previous section, we follow here the alternative approach presented in \cite{BPZ16} to overcome the issues related to the treatment of state constraints when assumptions such as (H3) (and (H3)') are not satisfied. 
\\

Let us start introducing the following auxiliary \textit{unconstrained} optimal control problem: 
\be\label{eq:W}
W(t,x,y,z):=\inf_{(u,\alpha)\in\mathcal U\times L^2_{\mathbb F}}\; \mathbb E\bigg[\max\Big(Z^{t,x,z,u,\alpha}_T, 0\Big)+\int^T_t d_K(Y^{t,y,u}_s)\mathrm{d} s\bigg],
\ee
where $L^2_{\mathbb F}\equiv L^2_{\mathbb F}([t,T];\mathbb R)$ is the set of adapted processes $\alpha$ such that  $\|\alpha\|_{L^2_{\mathbb F}}:= \mathbb E[\int^T_{t} |\alpha_s|^2\mathrm d s]<+\infty$  and  
$$
Z^{t,x,z,u,\alpha}_\cdot:=z-\int^\cdot_t \tilde L(X^{t,x}_s,u_s)\mathrm d s+\int^\cdot_t \alpha_s \mathrm d B_s
$$
for $\tilde L(x,u) := x(\kappa(u)- \bar \kappa)$ with  $\bar \kappa:= \max_{u\in U} \kappa(u)$.

We refer to $W$ as the \textit{level set function} for problem \eqref{eq:def_V}.

 The following theorem states the fundamental link between the original optimal control problem \eqref{eq:def_V} and the auxiliary problem \eqref{eq:W}.
 
\begin{theo}\label{teo:main}
The following holds 
\be\label{eq:mainV}
V(t,x,y)=\sup \bigg\{z\leq 0: W(t,x,y,z)=0\bigg\} + G(t,x), 
\ee
with $ G(t,x):= \bar \kappa \,\mathbb E\left[\int^T_t X^{t,x}_s\mathrm{d} s\right]$.
\end{theo}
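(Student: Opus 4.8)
The plan is to perform the inner optimisation over $\alpha$ first, exploiting that it enters $Z^{t,x,z,u,\alpha}_T$ only through the stochastic integral. The analytic heart of the level-set method is the following elementary lemma: for every $R\in L^2(\Omega,\mathcal F_T)$ and every $z\in\R$,
$$
\inf_{\alpha\in L^2_{\mathbb F}}\mathbb E\Big[\max\Big(z-R+\int_t^T\alpha_s\,\mathrm dB_s,\,0\Big)\Big]=\max\big(z-\mathbb E[R],\,0\big).
$$
The inequality ``$\geq$'' follows from Jensen's inequality applied to the convex map $r\mapsto\max(r,0)$ together with the fact that the stochastic integral has zero expectation; the inequality ``$\leq$'' follows by choosing $\alpha$ from the martingale representation of $R-\mathbb E[R]$, which makes the argument of $\max(\cdot,0)$ deterministic and equal to $z-\mathbb E[R]$. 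I would apply this with $R=\int_t^T\tilde L(X^{t,x}_s,u_s)\,\mathrm ds$, which lies in $L^2$ because $\kappa-\bar\kappa$ is bounded on $U$ and $\mathbb E[\sup_{s}(X^{t,x}_s)^2]\leq C(1+x^2)$ by (H1).

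This inner minimisation, carried out for each fixed $u$, yields
$$
W(t,x,y,z)=\inf_{u\in\mathcal U}\Big\{\max\Big(z-\mathbb E\Big[\int_t^T\tilde L(X^{t,x}_s,u_s)\,\mathrm ds\Big],0\Big)+\mathbb E\Big[\int_t^T d_K(Y^{t,y,u}_s)\,\mathrm ds\Big]\Big\}.
$$
Since $d_K\geq 0$ and $s\mapsto Y^{t,y,u}_s$ is continuous, $\mathbb E[\int_t^T d_K(Y^{t,y,u}_s)\,\mathrm ds]=0$ if and only if $u\in\Uad$. Writing $\Phi(t,x,y):=\sup_{u\in\Uad}\mathbb E[\int_t^T\tilde L(X^{t,x}_s,u_s)\,\mathrm ds]$ (with the convention $\sup\emptyset=-\infty$), the target is $W(t,x,y,z)=0\Leftrightarrow z\leq\Phi(t,x,y)$. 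The implication ``$\Leftarrow$'' is easy: given $z\leq\Phi$ I take a maximising sequence $u^n\in\Uad$, along which the penalisation term vanishes identically while $\max(z-\mathbb E[\int\tilde L],0)\to\max(z-\Phi,0)=0$, so $W\leq 0$, whence $W=0$.

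It remains to identify $\Phi$ with $V-G$ and to dispose of the $z\le 0$ constraint. Because $\tilde L(x,u)=L(x,u)-\bar\kappa\,x$ and the factor $\bar\kappa\,\mathbb E[\int_t^T X^{t,x}_s\,\mathrm ds]=G(t,x)$ does not depend on the control, one gets $\Phi(t,x,y)=V(t,x,y)-G(t,x)$. Moreover $\kappa(u)\le\bar\kappa$ and $X^{t,x}\ge0$ (by (H2)) force $\tilde L\le 0$, hence $\Phi\le 0$; the constraint $z\le 0$ in the supremum is therefore slack whenever $\Uad\neq\emptyset$, and $\sup\{z\le0:W(t,x,y,z)=0\}=\Phi(t,x,y)=V(t,x,y)-G(t,x)$, which gives \eqref{eq:mainV} after adding $G$. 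When $\Uad=\emptyset$ one has $V=-\infty$ by convention, and consistency of \eqref{eq:mainV} requires that the level set $\{z\le0:W=0\}$ be empty, i.e. that $\inf_u\mathbb E[\int_t^T d_K(Y^{t,y,u}_s)\,\mathrm ds]>0$.

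The hard part is the converse implication $W(t,x,y,z)=0\Rightarrow z\leq\Phi$, together with the empty-admissible-set consistency just mentioned. The difficulty is that a minimising sequence $u^n$ for $W$ is only \emph{approximately} admissible ($\mathbb E[\int_t^T d_K(Y^{t,y,u^n}_s)\,\mathrm ds]\to0$) while satisfying $\liminf_n\mathbb E[\int_t^T\tilde L(X^{t,x}_s,u^n_s)\,\mathrm ds]\geq z$, and one must extract from it a genuinely admissible control (or sequence) with $\tilde L$-payoff at least $z$. This is a lower-semicontinuity/closedness property of the attainable pairs $(\mathbb E[\int\tilde L],\mathbb E[\int d_K])$ near $\{\,\mathbb E[\int d_K]=0\,\}$, equivalently the statement that the penalisation by $d_K$ is \emph{exact}. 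I would establish it through the general results of \cite{BPZ16}, after checking that (H1)--(H2) and the boundedness of $U$, $\beta_1,\beta_2$ supply the regularity and integrability hypotheses required there; the decisive point, and the advantage over a direct dynamic-programming treatment, is that this closedness concerns the \emph{unconstrained} auxiliary problem and therefore needs no controllability assumption such as (H3)/(H3)$'$ on the original state-constrained problem.
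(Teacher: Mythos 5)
Your reduction of the $\alpha$-minimisation is correct and in fact cleaner than the paper's presentation: the identity $\inf_{\alpha}\mathbb E[\max(z-R+\int_t^T\alpha_s\,\mathrm dB_s,0)]=\max(z-\mathbb E[R],0)$ (Jensen for ``$\geq$'', martingale representation for ``$\leq$'') collapses $W$ to a deterministic penalised problem over $u$ alone, and your ``$\Leftarrow$'' direction, the identification $\Phi=V-G$, the observation that $\tilde L\le 0$ makes the constraint $z\le 0$ slack, and the treatment of $\Uad=\emptyset$ are all sound. The paper's proof of the inclusion ``$V-G\le\sup\{\cdots\}$'' uses the same martingale-representation idea, applied control by control rather than once and for all.

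The genuine gap is the converse implication, which you correctly isolate as the exactness of the penalisation --- the closedness of the attainable pairs $\bigl(\mathbb E[\int\tilde L],\mathbb E[\int d_K]\bigr)$ near $\{\mathbb E[\int d_K]=0\}$ --- but then do not prove, deferring instead to \cite{BPZ16}. This step is the mathematical heart of the theorem and it is \emph{not} an automatic lower-semicontinuity fact: it fails for general control sets and general $\kappa$. The paper proves it by taking a minimising sequence $(u^k,\alpha^k)$, extracting a weakly convergent subsequence of $u^k$ in $L^2_{\mathbb F}$ (possible because $U$ is compact), upgrading to strong convergence of convex combinations $\tilde u^k=\sum_i\lambda_i u^{i+k}$ via Mazur's lemma, and then showing that the cost of $(\tilde u^k,\tilde\alpha^k)$ does not exceed the convex combination of the costs. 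That last comparison uses, in an essential way, the convexity of $U$, the linearity of $Y$ in $u$, the convexity of $d_K$ and of $\max(\cdot,0)$, and above all the \emph{concavity} of $c$ (so that $c(\sum_i\lambda_i u^{i+k}_{1,s})\ge\sum_i\lambda_i c(u^{i+k}_{1,s})$ and hence $Z^{\tilde u^k,\tilde\alpha^k}_T\le\sum_i\lambda_i Z^{u^{i+k},\alpha^{i+k}}_T$); a dominated-convergence argument then passes to the limit and produces a genuinely admissible $\hat u$ with $\mathbb E[\int\tilde L(X^{t,x}_s,\hat u_s)\,\mathrm ds]\ge z$. Your write-up never invokes the convexity/concavity structure of $(U,c,K)$, which is precisely the hypothesis that makes the closedness true; without supplying this argument (or verifying in detail that the hypotheses of the cited general result are met), the decisive implication $W(t,x,y,z)=0\Rightarrow z\le V(t,x,y)-G(t,x)$, and with it the consistency in the case $\Uad=\emptyset$, remains unproved.
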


\begin{proof}
We start proving that 
$$
V(t,x,y)\leq \sup \bigg\{z\leq 0: W(t,x,y,z)=0\bigg\} + G(t,x).
$$
If $\Uad=\emptyset$ there is nothing to prove. Let us then assume that  $\Uad\neq \emptyset$. For any  $u\in \Uad$, {define
$$ z^u := \mathbb E\left[ \int^T_t  L(X^{t,x}_s,u_s) \mathrm d  s\right] - G(t,x) = \mathbb E\left[ \int^T_t \tilde L(X^{t,x}_s,u_s) \mathrm d  s\right] \leq 0 $$
Since  $\int^T_t \tilde L(X^{t,x}_s,u_s)\mathrm d s  \in L^2_{\mathbb F}$, }
by the martingale representation theorem there exists $\alpha \in  L^2_{\mathbb F}$ such that a.s.
$$
z^u = \int^T_t \tilde L(X^{t,x}_s,u_s) \mathrm d  s-\int^T_t \alpha_s \mathrm d B_s.
$$
This implies that 
$$
0\leq W(t,x,y,z^u)\leq \mathbb E\bigg[\max\Big(Z^{t,x,z^u,u,\alpha}_T, 0\Big)+\int^T_t d_K(Y^{t,y,u}_s)\mathrm{d} s\bigg] =0
$$
which gives the following inclusion 
$$
\bigg\{ { z \leq 0 : z = \mathbb E\Big[ \int^T_t  L(X^{t,x}_s,u_s) \mathrm d  s\Big]  -G(t,x)}, u\in \Uad\bigg\} \subseteq\bigg\{z\leq 0: W(t,x,y,z)=0\bigg\} 
$$
from which the desired inequality follows.

We now prove that 
$$
V(t,x,y)\geq \sup \bigg\{z\leq 0: W(t,x,y,z)=0\bigg\} + G(t,x).
$$

Let $z\leq 0$ such that $W(t,x,y,z)=0$ (if such $z$ does not exists there is nothing to prove). 
Let $(u^k,\alpha^k)\in \mathcal U\times  L^2_{\mathbb F}$ be a minimizing sequence for $W(t,x,y,z)$. Therefore for any $\varepsilon>0$  there exists $k_0$ such that $\forall k\geq k_0$ one has 
$$
 \mathbb E\bigg[\max\Big(Z^{t,x,z,u^k,\alpha^k}_T, 0\Big)+\int^T_t d_K(Y^{t,y,u^k}_s)ds\bigg] \leq W(t,x,y,z) + \varepsilon    = \varepsilon.
$$
Being $u^k$ uniformly bounded in norm $L^2_{\mathbb F}$ (because $u$ takes values in a compact set), there exists a weakly convergent subsequence (for simplicity still indexed by $k$), i.e.
$$
u^k\to \hat u\qquad \text{weakly in $L^2_{\mathbb F}$},
$$
 for some $\hat u\in \mathcal U$.
Applying Mazur's theorem one has that there exists $\tilde u^k= \sum_{i\geq 0} \lambda_i u^{i+k}$ with $\lambda_i\geq 0$ and $\sum_{i\geq 0}\lambda_i =1$ such that 
$$
 \tilde u^k\to \hat u\qquad \text{strongly in $L^2_{\mathbb F}$}.
$$
We then consider $(\tilde u^k, \tilde \alpha^k)\equiv \sum_{i\geq 0} \lambda_i (u^{i+k},\alpha^{i+k})$. Observe that $(\tilde u^k, \tilde \alpha^k)$ still belongs to $\mathcal U\times  L^2_{\mathbb F}$ because $U$ is convex and $ L^2_{\mathbb F}$ is a convex space.  
 Let us now consider $(Z^{\tilde u^k,\tilde \alpha^k}, Y^{\tilde u^k})$. One has 
 \begin{align*}
 \mathbb E\left[ \left| Z^{t,x,z,\hat u,\tilde \alpha^k}_T - Z^{t,x,z,\tilde u^k,\tilde \alpha^k}_T\right|^2\right] & = \mathbb E\Big[\Big|\int^T_t X^{t,x}_s\big( \tilde u^k_{2,s} -\hat u_{2,s} + c(\tilde u^k_{1,s}) - c( \hat u_{1,s})\big) \mathrm d s \Big|^2\Big]\\
  & \leq T \mathbb E\Big[\int^T_t \Big|X^{t,x}_s\big( \tilde u^k_{2,s} -\hat u_{2,s} + c(\tilde u^k_{1,s}) - c( \hat u_{1,s})\big) \Big|^2\mathrm d s  \Big]\\
 & \leq T \mathbb E\Big[\int^T_t (X^{t,x}_s)^2 \big( 2|\tilde u^k_{2,s} -\hat u_{2,s} |^2 + 2|c(\tilde u^k_{1,s}) - c( \hat u_{1,s})|^2\big)\mathrm d s  \Big]\\
  & \leq 2 T \mathbb E\Big[\int^T_t (X^{t,x}_s)^2 \big( |\tilde u^k_{2,s} -\hat u_{2,s} |^2 + L^2_c|\tilde u^k_{1,s} -  \hat u_{1,s}|^2\big)\mathrm d s  \Big].
 \end{align*}
{ where $L_c (= \gamma)$ is the Lipschitz constant of the function $c$.} 
 Moreover, from the strong convergence in $L^2_{\mathbb F}$-norm it follows that there exists a subsequence $\tilde u^{k_n}$ such that $|\tilde u^{k_n} - \hat u|^2\to 0$ a.e.. 
 One also has 
 $$
  (X^{t,x}_s)^2 \left( 2|\tilde u^k_{2,s} -\hat u_{2,s} |^2 + 2L^2_c|\tilde u^k_{1,s} -  \hat u_{1,s}|^2\right) \leq C X_s^2 \qquad\text{a.s.}
 $$
for some constant $C$ depending on the uniform bound of elements in $\mathcal U$. Being $X_s^2$ integrable under the classical assumption (H1), we can apply the dominate convergence theorem on the subsequence $u^{k_n}$ to get 
$$
\lim_{n\to\infty} \mathbb E\Big[\int^T_t (X^{t,x}_s)^2 \big( |\tilde u^{k_n}_{2,s} -\hat u_{2,s} |^2 + |\tilde u^{k_n}_{1,s} -  \hat u_{1,s}|^2\big)\mathrm d s  \Big] = \mathbb E\Big[\int^T_t \lim_{n\to\infty}(X^{t,x}_s)^2 \big( |\tilde u^{k_n}_{2,s} -\hat u_{2,s} |^2 + |\tilde u^{k_n}_{1,s} -  \hat u_{1,s}|^2\big)\mathrm d s  \Big] = 0.
$$ 
 In conclusion, there exists a suitable subsequence (indexed with $k$ for simplicity) such that  
 $$
  \mathbb E\left[ \left| Z^{t,x,z,\tilde u^k,\tilde \alpha^k}_T - Z^{t,x,z,\hat u,\tilde \alpha^k}_T\right|^2\right] \to 0.
 $$
 Similarly one has 
 $$
  \mathbb E\left[\sup_{s\in [t,T]} \left| Y^{t,y,\tilde u^k}_s - Y^{t,y,\hat u}_s\right|^2\right] \to 0.
 $$
 Then, for any $\varepsilon >0$ there exists $k_1$ such that $\forall k\geq k_1$ one has  
$$
\left| \mathbb E\bigg[\max\Big(Z^{t,x,z,\tilde u^k,\tilde \alpha^k}_T, 0\Big)+\int^T_t d_K(Y^{t,y,\tilde u^k}_s)\mathrm{d} s\bigg] - \mathbb E\bigg[\max\Big(Z^{t,x,z,\hat u,\tilde \alpha^k}_T, 0\Big)+\int^T_t d_K(Y^{t,y,\hat u}_s)\mathrm ds\bigg] \right| \leq \varepsilon.
$$
 Moreover, one has (using the fact that $c(\cdot)$ is concave and $X^{t,x}_s\geq 0$):
 \begin{align*}
 Z^{t,x,z,\tilde u^k,\tilde \alpha^k}_T 
 & = z -\int^T_t X^{t,x}_s\big( \tilde u^k_{2,s} + c(\tilde u^k_{1,s}) -\bar\kappa\big) \mathrm d s + \int^T_t \tilde \alpha^k_{s} \mathrm d B_s \\
 & = z -\int^T_t X^{t,x}_s\Big( \sum_{i\geq 0} \lambda_i u^{i+k}_{2,s} + c\Big(\sum_{i\geq 0} \lambda_i u^{i+k}_{1,s}\Big)  -\bar\kappa\Big) \mathrm d s + \int^T_t \sum_{i\geq 0} \lambda_i  \alpha^{i+k}_s \mathrm d B_s\\
 & \leq z -\int^T_t X^{t,x}_s\Big( \sum_{i\geq 0} \lambda_i u^{i+k}_{2,s} + \sum_{i\geq 0} \lambda_i c(u^{i+k}_{1,s})  -\bar\kappa\Big) \mathrm d s + \int^T_t \sum_{i\geq 0} \lambda_i  \alpha^{i+k}_s \mathrm d B_s\\
 &  = z -  \sum_{i\geq 0} \lambda_i  \int^T_t X^{t,x}_s\Big(u^{i+k}_{2,s} +  c(u^{i+k}_{1,s}) -\bar\kappa\Big) \mathrm d s + \sum_{i\geq 0} \lambda_i \int^T_t   \alpha^{i+k}_s \mathrm d B_s \\
 & = \sum_{i\geq 0} \lambda_i Z^{t,x,z,u^{i+k}, \alpha^{i+k}}_T 
 \end{align*}
 and (being $Y$ linear in $u$)
 $$
 Y^{t,y,\tilde u^k}_s = \sum_{i\geq 0} \lambda_i  Y^{t,y,u^{i+k}}_s.
 $$
 Putting these things together (observing that $\max(z,0)$ is convex and that being $K$ convex also $d_K$ is) one has for $k\geq \max(k_0,k_1)$:
 \begin{align*}
 \mathbb E\bigg[\max\Big(Z^{t,x,z,\hat u,\tilde \alpha^k}_T, 0\Big) & + \int^T_t d_K(Y^{t,y,\hat u}_s)\mathrm d s\bigg] \leq \mathbb E\bigg[\max\Big(Z^{t,x,z,\tilde u^k,\tilde \alpha^k}_T, 0\Big)+\int^T_t d_K(Y^{t,y,\tilde u^k}_s)\mathrm{d}s\bigg] + \varepsilon\\
 & = \mathbb E\bigg[\max\Big(\sum_{i\geq 0} \lambda_i  Z^{t,x,z,u^{i+k},\alpha^{i+k}}_T, 0\Big)+\int^T_t d_K(\sum_{i\geq 0} \lambda_i Y^{t,y,u^{i+k}}_s)\mathrm{d}s\bigg] + \varepsilon\\
 & \leq \sum_{i\geq 0} \lambda_i \mathbb E\bigg[\max\Big(Z^{t,x,z,u^{i+k},\alpha^{i+k}}_T, 0\Big)+\int^T_t d_K(Y^{t,y, u^{i+k}}_s)\mathrm d s\bigg] + \varepsilon\\
 & \leq W(t,x,y,z) +2 \varepsilon\\
 & =2\varepsilon.
 \end{align*}
 From the previous inequality we can immediately conclude that $\hat u\in \Uad$, because for any arbitrary  $\varepsilon>0$ one has $0\leq  \mathbb E\Big[\int^T_t d_K(Y^{t,y,\hat u}_s)\mathrm d s\Big]\leq 2\varepsilon$. Moreover, one also has 
 \begin{align*}
z - \mathbb E\left[ \int^T_t \tilde L(X^{t,x}_s,\hat u_s)\mathrm d s \right] \leq  \mathbb E\bigg[\max\Big(Z^{t,x,z,\hat u,\tilde \alpha^k}_T, 0\Big)+\int^T_t d_K(Y^{t,y,\hat u}_s)\mathrm d s\bigg] & \leq 2\varepsilon
\end{align*}
which gives 
$$
 z \leq \mathbb E\left[ \int^T_t L(X^{t,x}_s,\hat u_s)\mathrm d s \right] +  G(t,x)
$$
and then $z\leq V(t,x,y) - G(t,x)$.

 \end{proof}


%
 
{
 \begin{cor}
If $W(t,x,y,z) = 0$, there exists an optimal $(\hat u, \hat \alpha)\in \mathcal U \times L^2_{\mathbb F}$ that realizes the infimum in \eqref{eq:W}. 
\end{cor}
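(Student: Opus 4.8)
The plan is to recycle the admissible control $\hat u$ already constructed in the second half of the proof of Theorem \ref{teo:main}, and to complete it with a suitable $\hat\alpha$ produced by the martingale representation theorem. The key preliminary remark is that, since both $\max(\cdot,0)\geq 0$ and $d_K\geq 0$, one always has $W\geq 0$; hence, when $W(t,x,y,z)=0$, realizing the infimum in \eqref{eq:W} is equivalent to exhibiting a pair $(\hat u,\hat\alpha)\in\mathcal U\times L^2_{\mathbb F}$ for which the expectation inside \eqref{eq:W} vanishes, i.e. for which both $\int^T_t d_K(Y^{t,y,\hat u}_s)\mathrm ds=0$ a.s. and $\max(Z^{t,x,z,\hat u,\hat\alpha}_T,0)=0$ a.s.

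For the control component I would take as $\hat u$ exactly the limit built in the proof of Theorem \ref{teo:main}: starting from a minimizing sequence $(u^k,\alpha^k)$, one extracts a weakly convergent subsequence and applies Mazur's theorem to obtain convex combinations converging strongly in $L^2_{\mathbb F}$ to some $\hat u\in\mathcal U$. That same argument shows $\hat u\in\Uad$, so $Y^{t,y,\hat u}_s\in K$ for all $s\in[t,T]$ a.s.\ and the running penalty $\int^T_t d_K(Y^{t,y,\hat u}_s)\mathrm ds$ is identically zero. Moreover the proof also yields $z\leq m$, where I set $m:=\mathbb E\big[\int^T_t \tilde L(X^{t,x}_s,\hat u_s)\mathrm ds\big]$; note that $m\leq 0$ because $\tilde L\leq 0$.

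It then remains to construct $\hat\alpha$. Under (H1) the random variable $\int^T_t \tilde L(X^{t,x}_s,\hat u_s)\mathrm ds$ is square integrable (it is linear in the square-integrable $X^{t,x}$, and $\hat u$ is bounded), so the martingale $M_s:=\mathbb E\big[\int^T_t \tilde L(X^{t,x}_r,\hat u_r)\mathrm dr\mid\mathcal F_s\big]$ admits a representation $M_s=m+\int^s_t\hat\alpha_r\mathrm dB_r$ with $\hat\alpha\in L^2_{\mathbb F}$. Evaluating at $s=T$ gives $\int^T_t\hat\alpha_r\mathrm dB_r=\int^T_t\tilde L(X^{t,x}_r,\hat u_r)\mathrm dr-m$, whence $Z^{t,x,z,\hat u,\hat\alpha}_T = z-\int^T_t\tilde L\,\mathrm dr+\int^T_t\hat\alpha_r\mathrm dB_r = z-m\leq 0$ a.s. Consequently $\max(Z^{t,x,z,\hat u,\hat\alpha}_T,0)=0$, and the pair $(\hat u,\hat\alpha)$ attains the value $0=W(t,x,y,z)$, which is the asserted optimality.

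The only genuinely delicate point, namely the admissibility of $\hat u$ (together with the inequality $z\leq m$), is already established inside the proof of Theorem \ref{teo:main}; the completion with $\hat\alpha$ is a routine application of martingale representation, and I expect it to present no real difficulty.
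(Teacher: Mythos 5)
Your proposal is correct and follows essentially the same route as the paper: both take the admissible control $\hat u$ constructed in the proof of Theorem \ref{teo:main} (which also gives $z\leq \mathbb E\big[\int^T_t \tilde L(X^{t,x}_s,\hat u_s)\mathrm d s\big]$) and pair it with the $\hat\alpha$ from the martingale representation of $\int^T_t \tilde L(X^{t,x}_s,\hat u_s)\mathrm d s$, so that $Z^{t,x,z,\hat u,\hat\alpha}_T=z-\mathbb E\big[\int^T_t \tilde L(X^{t,x}_s,\hat u_s)\mathrm d s\big]\leq 0$ and the cost vanishes. The only difference is presentational: you spell out the nonnegativity of $W$ and the square-integrability justifying the representation, which the paper leaves implicit.
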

\begin{proof}
Given $\hat u\in \Uad$ as in the proof of Theorem \ref{teo:main} and $\hat\alpha  \in L^2_{\mathbb F}$ such that 
 $$
  \int^T_t \tilde L (X^{t,x}_s, \hat u_s)\mathrm d s - \int^T_t \hat \alpha_s \mathrm d B_s =  \mathbb E\left[ \int^T_t \tilde L (X^{t,x}_s, \hat u_s)\mathrm d s \right] \geq z
 $$
 one has 
 $$
 \mathbb E\left[ \max\left(  Z^{t,x,z,\hat u,\hat \alpha}_T, 0\right) + \int^T_t d_K(Y^{t,y,\hat u}_s) \mathrm d s \right] = 0 = W(t,x,y,z).
 $$
 \end{proof}
 }
 \begin{rem}
The use of the modified cost $\tilde L$ instead of $L$ guarantees the non-positivity of the running cost which is a useful property in view of the HJB characterization provided in \cite{BPZ16}. 
Notice that the computation of the function $G(t,x)$ { typically} does not add any difficulty and we can consider it given by the problem data.
 \end{rem}
 
 The following regularity result turns out to be particularly useful for the PDE characterization of $W$.
 
 \begin{prop}\label{prop:reg}
 The  function $W$ 
 is uniformly continuous in $(x,y,z)$ 
 and satisfies $\lim_{t\to T} W(t,x,y,z) = \max(z,0)$ 
 uniformly.\\
 Moreover, for any $z\geq 0$ one has $W(t,x,y,z) = z + \inf_{u\in \mathcal U} \mathbb E\left[ \int^T_t \left(\tilde L (X^{t,x}_s,u_s)+ d_k(Y^{t,y,u}_s)\right)\mathrm d s\right] $.
\end{prop}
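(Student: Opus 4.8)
The plan is to establish the three assertions separately, in every case using the elementary bound $|\inf_a f(a)-\inf_a g(a)|\le\sup_a|f(a)-g(a)|$ to push estimates on the integrand of \eqref{eq:W} onto $W$, so that the infimum over the pair $(u,\alpha)$ is never actually computed. For the continuity I would exploit that the three variables decouple in \eqref{eq:W}: both $x$ and $z$ enter only through $Z^{t,x,z,u,\alpha}_T$, while $y$ enters only through $\int_t^T d_K(Y^{t,y,u}_s)\,\mathrm d s$. Since $Z^{t,x,z,u,\alpha}_T-Z^{t,x,z',u,\alpha}_T=z-z'$ and $r\mapsto\max(r,0)$ is $1$-Lipschitz, one gets $|W(t,x,y,z)-W(t,x,y,z')|\le|z-z'|$ immediately. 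Because the dynamics of $Y$ are deterministic and affine, $Y^{t,y,u}_s-Y^{t,y',u}_s=y-y'$ for every $s$ (the $u$-dependent drift cancels), and $d_K$ is $1$-Lipschitz, so $|W(t,x,y,z)-W(t,x,y',z)|\le T|y-y'|$. Finally $Z^{t,x,z,u,\alpha}_T-Z^{t,x',z,u,\alpha}_T=-\int_t^T(X^{t,x}_s-X^{t,x'}_s)(\kappa(u_s)-\bar\kappa)\,\mathrm d s$; since $\kappa$ is bounded on the compact set $U$ and $\mathbb E[\sup_s|X^{t,x}_s-X^{t,x'}_s|^2]\le C|x-x'|^2$ as in Lemma~\ref{lem:growth}, Cauchy--Schwarz gives $|W(t,x,y,z)-W(t,x',y,z)|\le C|x-x'|$. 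All constants are independent of $t$, so combining yields joint Lipschitz (hence uniform) continuity in $(x,y,z)$.

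For the terminal behaviour I would sandwich $W$ between $\max(z,0)$ and $\max(z,0)$ plus a vanishing term. The lower bound is in fact exact and holds for all $t$: since $\tilde L\le 0$ (as $\kappa\le\bar\kappa$ and $X^{t,x}_s\ge0$ by (H2)) and $\int_t^T\alpha_s\,\mathrm d B_s$ is a mean-zero martingale increment, Jensen's inequality gives, for every $(u,\alpha)$, $\mathbb E[\max(Z^{t,x,z,u,\alpha}_T,0)]\ge\max(\mathbb E[Z^{t,x,z,u,\alpha}_T],0)=\max(z-\mathbb E[\int_t^T\tilde L\,\mathrm d s],0)\ge\max(z,0)$; dropping the nonnegative penalization then yields $W\ge\max(z,0)$. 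For the upper bound I would take $\alpha\equiv0$ and any $u\in\mathcal U$, use $\max(z+b,0)\le\max(z,0)+b$ with $b=-\int_t^T\tilde L\,\mathrm d s\ge0$, and estimate $\mathbb E[-\int_t^T\tilde L\,\mathrm d s]\le C(T-t)(1+x)$ together with $\mathbb E[\int_t^T d_K(Y^{t,y,u}_s)\,\mathrm d s]\le C(T-t)^2$ for $y\in K$ (where $d_K(y)=0$), obtaining $W(t,x,y,z)\le\max(z,0)+C(T-t)(1+x)$. Letting $t\to T$ gives the limit, uniformly on compact subsets of the domain.

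For the identity when $z\ge0$ the crux is that the martingale control $\alpha$ plays no role. With $\alpha\equiv0$ one has $Z^{t,x,z,u,0}_T=z-\int_t^T\tilde L\,\mathrm d s\ge z\ge0$ a.s., so the positive part is inactive and $\mathbb E[\max(Z^{t,x,z,u,0}_T,0)]=z-\mathbb E[\int_t^T\tilde L\,\mathrm d s]$; this gives $W(t,x,y,z)\le z+\inf_u\mathbb E[\int_t^T(-\tilde L(X^{t,x}_s,u_s)+d_K(Y^{t,y,u}_s))\,\mathrm d s]$. The matching lower bound is the Jensen step above, which shows that for every $(u,\alpha)$ the cost is at least $z-\mathbb E[\int_t^T\tilde L\,\mathrm d s]+\mathbb E[\int_t^T d_K\,\mathrm d s]$, independently of $\alpha$; taking the infimum over $u$ closes the two bounds and proves the stated representation (with $-\tilde L\ge0$, consistently with $W\ge0$).

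I expect the only genuine subtlety to be this last identity: one must recognize that, because $\tilde L\le0$ keeps $Z$ above $z\ge0$, the convexity of $\max(\cdot,0)$ together with the zero mean of $\int\alpha\,\mathrm d B$ forces $\alpha\equiv0$ to be optimal, collapsing the augmented problem to a purely deterministic running-cost minimization over $u$. The continuity and terminal estimates are routine consequences of the affine deterministic dynamics of $Y$, the Lipschitz SDE estimates for $X$, and the $1$-Lipschitz regularity of $d_K$ and $\max(\cdot,0)$; the only point needing care there is that ``uniformly'' in the terminal limit must be read as uniformly on compacts (or for $y\in K$), since the bounds carry the factor $(1+x)$ and, off $K$, the term $d_K(y)$.
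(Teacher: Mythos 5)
Your proof is correct and follows essentially the same route as the paper: explicit Lipschitz estimates in $(x,y,z)$ for the continuity (which the paper merely declares straightforward), the $\alpha\equiv 0$ upper bound together with the Jensen/minoration lower bound for the terminal limit, and the observation that for $z\ge 0$ the process $Z^{t,x,z,u,0}_\cdot$ stays above $z$ so the positive part is inactive and $\alpha\equiv 0$ is optimal. One substantive point: the representation you derive carries $-\tilde L$ inside the integral, and this is the correct sign (it is forced by $Z^{t,x,z,u,\alpha}_T=z-\int_t^T\tilde L\,\mathrm d s+\int_t^T\alpha_s\,\mathrm d B_s$ and is consistent with $W\ge z$ when $z\ge0$, since $\tilde L\le 0$); the $+\tilde L$ appearing in the printed statement and proof is a sign slip. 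Your caveat that the terminal convergence is uniform only on compacts (or for $y\in K$) is also well taken and matches the paper's own bound $C h(1+|x|+|y|)$.
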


\begin{proof}
The uniform continuity of $W$ with respect to $(x,y,z)$ is straightforward. Moreover, by the definition of $W$, the Lipschitz continuity of $d_K$ and $\tilde L$ and classical estimates on the processes $(X_\cdot, Y_\cdot)$, one has for any  $h>0$ such that $T-h\in [t,T]$ and $u\in \mathcal U$,
\begin{align*}
&W({T - h},x,y,z) -\max(z,0) \leq \mathbb E\bigg[\max\Big(Z^{T-h,x,z,u,0}_T, 0\Big)+\int^T_{T-h} d_K(Y^{T-h,y,u}_s)\mathrm{d} s\bigg] -\max(z,0)\\
& \leq C \mathbb E\left[|Z^{T-h,x,z,u,0} - z|\right]+ h \left(1+ C\mathbb E\left[ \sup_{s\in [T-h,T]} |Y^{t,y,u}_s|\right]\right) \leq C h (1+|x|+|y|).
\end{align*}
If $z\leq 0$  by the nonegativity of $W$ one immediately has   $W(t,x,y,z) -\max(z,0)\geq 0$. Otherwise if $z>0$, by the nonegativity of $d_K$, the martingale property of stochastic integrals and classical estimates on the process $X$, one has 
\begin{align*}
W(t,x,y,z) -\max(z,0) \geq \inf_{(u,\alpha)\in\mathcal U\times L^2_{\mathbb F}} \mathbb E\left[Z^{T-h,x,z,u,\alpha}_T -z\right]  =  \inf_{u\in\mathcal U} \mathbb E\left[Z^{T-h,x,z,u,0}_T -z\right] \geq - C  h (1+|x|),
\end{align*}
which concludes the proof { of $\lim_{t\to T} W(t,x,y,z) = \max(z,0)$.} 
\\
Let us now assume that $z\geq 0$. Minorating the positive part by its argument and using the martingale property of stochastic integrals  one always has
$$
W(t,x,y,z)\geq z + \inf_{u\in \mathcal U} \mathbb E\left[ \int^T_t \left(\tilde L (X^{t,x}_s,u_s)+ d_k(Y^{t,y,u}_s)\right)\mathrm d s\right].
$$
The reverse inequality is obtained observing that 
$$
W(t,x,y,z) \leq \inf_{u\in \mathcal U}\mathbb E\bigg[\max\Big(z -\int^T_t \tilde L (X^{t,x}_s,u_s) \mathrm{d} s ,0\Big) +\int^T_{t} d_K(Y^{t,y,u}_s)\mathrm{d} s\bigg]
$$
and using the non negativity of the process $z -\int^T_t \tilde L (X^{t,x}_s,u_s) \mathrm{d} s$.\\
\end{proof}

\begin{cor}\label{cor:opt_contr}
Let  $V(t,x,y)$  be finite. Then, there exists   $z^*\leq 0$ that   realizes the supremum 
in \eqref{eq:mainV} and if   $(u^*,\alpha^*)$ is an optimal control  for the level set problem \eqref{eq:W} at point $(t,x,y,z^*)$, then $u^*$ is admissible and an optimal control for the original optimal control problem \eqref{eq:def_V} at $(t,x,y)$.
\end{cor}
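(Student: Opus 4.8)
The plan is to combine the representation \eqref{eq:mainV} with the continuity of $W$ in its last variable to obtain attainment of the supremum, and then to read off admissibility and optimality of $u^*$ from the vanishing of the two nonnegative terms in \eqref{eq:W}.

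First I would show that the supremum in \eqref{eq:mainV} is attained. Writing $S := \{z\le 0 : W(t,x,y,z)=0\}$, the finiteness of $V(t,x,y)$ together with Theorem \ref{teo:main} gives $\sup S = V(t,x,y)-G(t,x)\in\R$, so $S$ is nonempty and bounded above. By Proposition \ref{prop:reg} the map $z\mapsto W(t,x,y,z)$ is (uniformly) continuous, hence $S$ is closed, and a nonempty closed subset of $\R$ that is bounded above contains its supremum. Thus there is $z^*\le 0$ with $z^*=V(t,x,y)-G(t,x)$ and $W(t,x,y,z^*)=0$. (Monotonicity of $W$ in $z$, immediate since increasing $z$ increases $Z^{t,x,z,u,\alpha}_T$ pointwise, additionally shows that $S$ is a half-line, but this is not needed for the argument.)

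I would then invoke the preceding corollary to obtain an optimal pair $(u^*,\alpha^*)\in\mathcal U\times L^2_{\mathbb F}$ for \eqref{eq:W} at $(t,x,y,z^*)$, so that
$$
\mathbb E\Big[\max\big(Z^{t,x,z^*,u^*,\alpha^*}_T,0\big)\Big]+\mathbb E\Big[\int^T_t d_K(Y^{t,y,u^*}_s)\,\mathrm d s\Big]=W(t,x,y,z^*)=0.
$$
Both terms are nonnegative, hence each vanishes. The vanishing of the second forces $d_K(Y^{t,y,u^*}_s)=0$ for all $s\in[t,T]$ a.s. (using the continuity of $s\mapsto Y^{t,y,u^*}_s$), i.e. $Y^{t,y,u^*}_s\in K$ for all $s$, which is precisely admissibility $u^*\in\Uad$. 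The vanishing of the first forces $Z^{t,x,z^*,u^*,\alpha^*}_T\le 0$ a.s.

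Finally I would take expectations in $Z^{t,x,z^*,u^*,\alpha^*}_T\le 0$; since $\alpha^*\in L^2_{\mathbb F}$ the stochastic integral is a zero-mean martingale, so $z^*\le\mathbb E[\int^T_t\tilde L(X^{t,x}_s,u^*_s)\,\mathrm d s]=\mathbb E[\int^T_t L(X^{t,x}_s,u^*_s)\,\mathrm d s]-G(t,x)$, using $\tilde L=L-\bar\kappa\,x$ and the definition of $G$. Adding $G(t,x)$ and recalling $z^*=V(t,x,y)-G(t,x)$ yields $V(t,x,y)\le\mathbb E[\int^T_t L(X^{t,x}_s,u^*_s)\,\mathrm d s]$, while the reverse inequality is automatic from $u^*\in\Uad$ and the definition \eqref{eq:def_V} of $V$ as a supremum over $\Uad$; hence $u^*$ is admissible and optimal. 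The only delicate point is the attainment in the first step, where the continuity of $W$ in $z$ from Proposition \ref{prop:reg} is exactly what guarantees that the level set $\{W(t,x,y,\cdot)=0\}$ is closed; everything afterwards follows directly from the nonnegativity of the two terms in \eqref{eq:W} and the martingale property of $\alpha^*$.
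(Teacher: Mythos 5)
Your proposal is correct, and it follows the same skeleton as the paper's proof (attainment of the supremum because the zero level set of $W(t,x,y,\cdot)$ is nonempty, closed and bounded above; then the inequality $z^*\leq \mathbb E[\int_t^T L(X^{t,x}_s,u^*_s)\,\mathrm{d}s]-G(t,x)$ combined with $z^*=V(t,x,y)-G(t,x)$). The one place where you genuinely diverge is the middle step: the paper obtains an admissible $u^*$ satisfying that inequality by pointing back to the minimizing-sequence/Mazur construction inside the proof of Theorem \ref{teo:main}, whereas you argue directly from the hypothesis that $(u^*,\alpha^*)$ attains the infimum in \eqref{eq:W} at $(t,x,y,z^*)$: since $W(t,x,y,z^*)=0$ and both terms in the cost are nonnegative, each vanishes, giving $Y^{t,y,u^*}_s\in K$ for all $s$ a.s.\ (hence admissibility) and $Z^{t,x,z^*,u^*,\alpha^*}_T\leq 0$ a.s., from which the inequality follows by taking expectations and using the martingale property of the $\alpha^*$-integral. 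This direct route is more elementary, avoids re-invoking the weak-compactness machinery, and in fact matches the conditional form of the statement more faithfully, since it applies to an \emph{arbitrary} optimal pair for the level-set problem rather than to the particular $\hat u$ constructed in Theorem \ref{teo:main}. Your additional observations (closedness of the level set via the continuity of $W$ in $z$ from Proposition \ref{prop:reg}, and the monotonicity of $W$ in $z$) are correct and make explicit what the paper leaves implicit.
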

\begin{proof}
Being  the supremum in \eqref{eq:mainV} defined on a nonempty (because $V$ is finite) set closed and bounded from above, one can always find $z^*\leq 0$ realizing this supremum. It follows by the proof of Theorem \ref{teo:main} that there exists $u^*\in \Uad$ such that
 $$
 z^* \leq \mathbb E\left[\int^T_t \tilde L(X^{t,x}_s, u^*_s)\mathrm d s\right] = \mathbb E\left[\int^T_t \tilde L(X^{t,x}_s, u^*_s)\mathrm d s\right] - G(t,x). 
 $$
 Recalling that $z^* = V(t,x,y) - G(t,x)$ one can immediately conclude that  $u^*$ is an optimal control for the original problem. 
\end{proof}
  

In conclusion,  by Theorem \ref{teo:main} and Corollary \ref{cor:opt_contr},  on its domain of definition the optimal value function  $V(t,x,y)$   and the associated optimal strategy is completely determined  by the unconstrained optimal problem \eqref{eq:W}. 

It is possible to show that the value function $W$ is associated to the following HJB equation: 
$$ \left\{ \begin{array}{l}
-W_t  -b(t,x) W_x - \frac{1}{2}\sigma^2(t,x)W_{xx} -d_{K} (y)+ \underset{\substack{u\in U, \alpha\in \R}}\sup \bigg\{ -(\beta_1(t)-u_1) W_{y_1} -(\beta_2(t)+u_1-u_2) W_{y_2}  \\
\qquad  + \tilde L(x,u) W_z  -  \alpha \sigma(t,x)W_{xz}-\frac{1}{2}\alpha^2 W_{zz}\bigg\}=0\\
 \\
 {W(T,x,y,z) = \max(z,0)} \\
 \end{array} \right. $$
We send the interested readers  to \cite[Section 4]{BPZ16} for the complete characterization of $W$ as the unique continuous viscosity solution of this generalized equation. 

%
%
%

\section{Numerical simulations}\label{sec:tests}

{
In this section we apply the results of  Theorem \ref{teo:main} and Corollary \ref{cor:opt_contr} to numerically approximate  the original value function $V$ solution of \eqref{eq:def_V} and the associated optimal feedback strategy. For details and discussions concerning the numerical approximation of  this particular type of equations, we send the interested readers e.g. to  \cite[Section 9.4]{DebrJako}  and \cite{BCR19}.
}

We focus on the single dam model as described in Section \ref{sec:problem}.
We recall that in this case { the set of admissible controls is defined as}
$$
\Uad = \left\{U\text{-valued progressively measurable processes : }  Y^{t,y,u}_s\in K \text{ for any } s\in [t,T] \text{ a.s.}\right\},
$$
where $U=[0,\bar u]$, $K=[0,\ymax]$ and
$$
Y^{t,y,u}_\cdot = y +\int^\cdot_t (\beta(s) -u_{s})\mathrm{d} s.
$$
The value function $V:[0,T]\times [0, +\infty)\times K \to \R$  is defined by 
$$
V(t,x,y) = \sup_{u\in \Uad} \mathbb E\left[ \int^T_t L(X^{t,x}_s,u_s)\mathrm d s\right]
$$
with $L(x,u)=x u$.
Along the entire section  we take $\ymax =1$, $T=1$ and
$
\beta(t) = 2\sin(\pi t) +0.5.
$

{\color{red}
\subsubsection{Electricity price modeled as a GBM} 
}
In order to better understand the technique we use let us start considering the electricity price which evolves as a geometric Brownian motion (GBM). In particular we consider the same parameters used in Section \ref{sec:HJB}, i.e.
$$
b(t,x) = b x\quad\text{and}\quad \sigma(t,x) = \sigma x. 
$$
with $b := 0.05$ and $\sigma := 0.1$. We point out that in this case we simply have $G(t,x) = \bar u \frac{e^{bT} - e^{bt}}{b} x$ for any $(t,x)\in [0,T]\times\R$. 

In order to validate the  approach presented in the previous section we first suppose that assumption (H3) is satisfied. 
The procedure is outlined in Figure \ref{fig:Wconstant}. As here the dependence on $x$ is very simple { (linear in $x$, recalling equation \eqref{linearinx})}, to visualize the results we can freeze the variable $x$ and concentrate ourselves just on the dependence on $t$, $y$ and $z$.
On the top we show  the  function $W(t,x,y,z)$ (left) and its level sets (right) for  $t=0$, $x=5$ and $(y,z)\in \R\times (-\infty, 0]$. Then, using \eqref{eq:mainV}  
we obtain (bottom, left) the value function $V(t,x,y)$ for $t=0, x=5$ and $y\in \R$. Observe that in  the region outside the interval $K=[0,1]$ the level-set function $W$ is positive which means, using again \eqref{eq:mainV}, that in those points $V$ is equal to $-\infty$, represented by the dashed grey line in the figure. This is aligned with the fact that points outside $K$ are not controllable. The resulting value function $V(t,x,y)$ for $t=0$, $x\geq 0$ and $y\in [0,1]$  is finally visible on the bottom-right. In this case, being assumption (H3) satisfied, Figure \ref{fig:Wconstant} (bottom, right) can be compared with the value function which was directly computed by HJB equation in Figure \ref{fig:1dam_price}, and we can notice that the reconstructed value is very near to the value obtained with the HJB equation.


\begin{figure}
\includegraphics[width=0.43\textwidth]{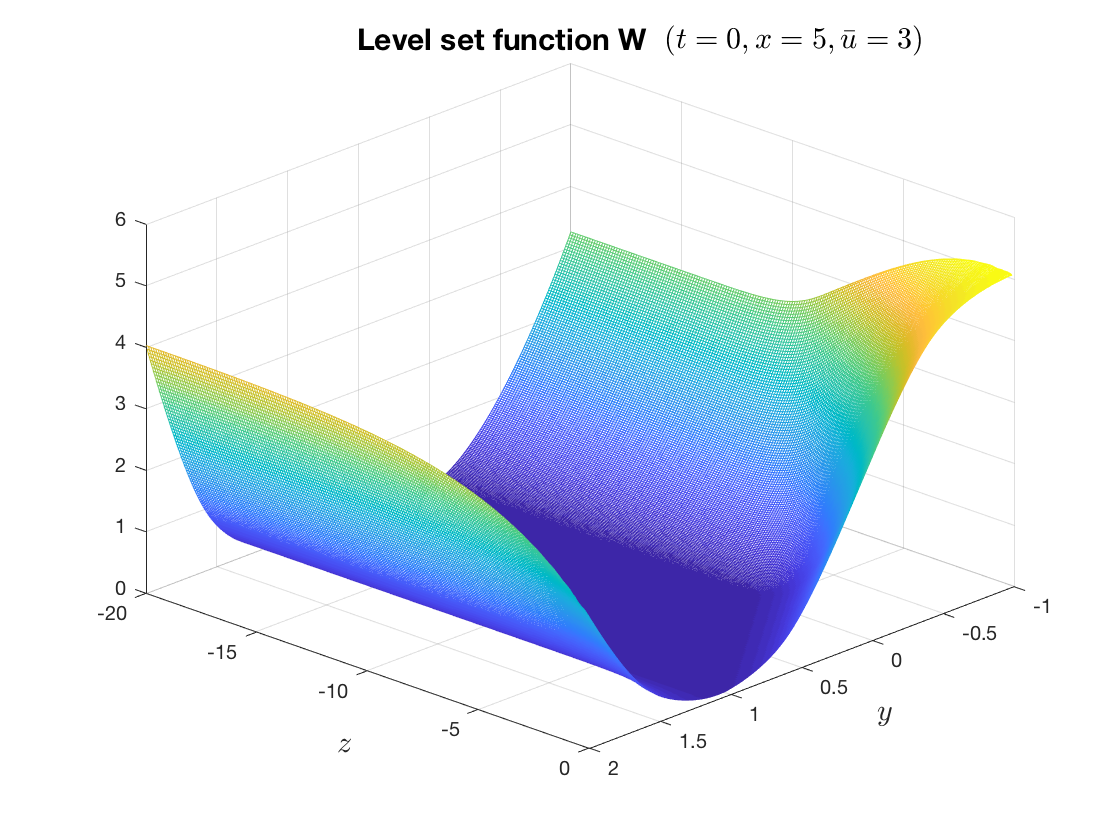} \includegraphics[width=0.43\textwidth]{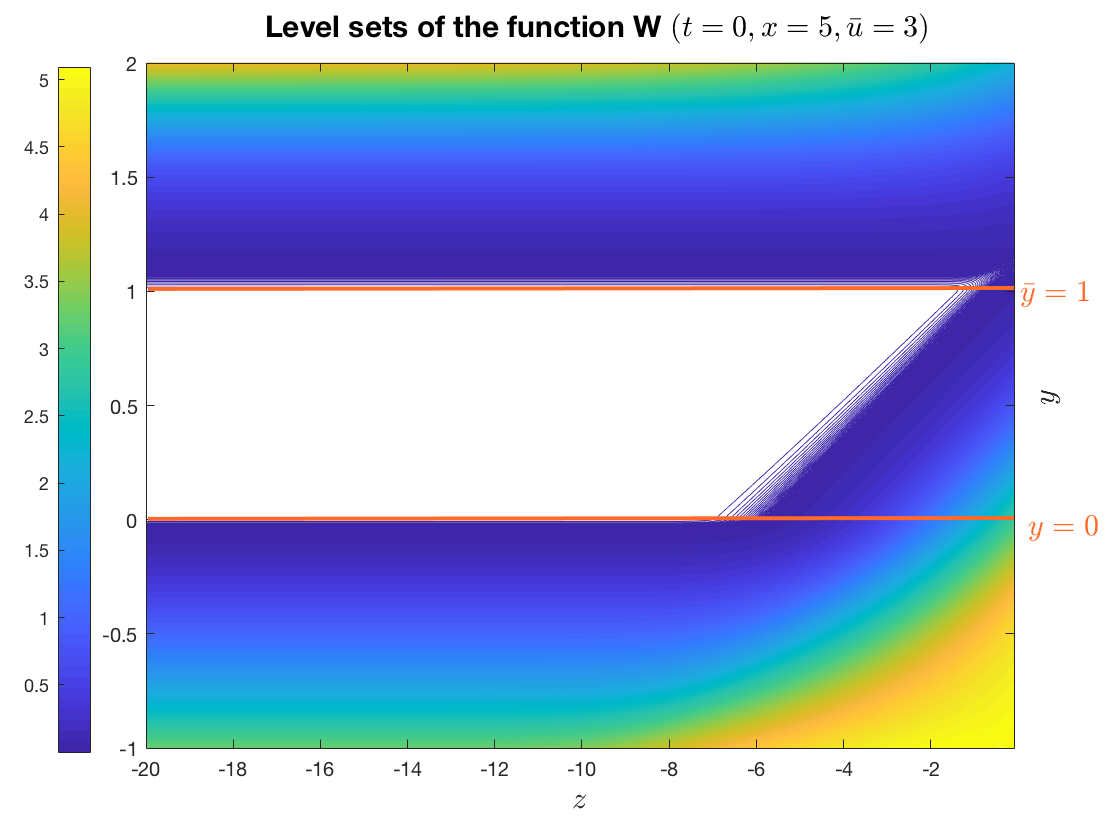} \\\includegraphics[width=0.43\textwidth]{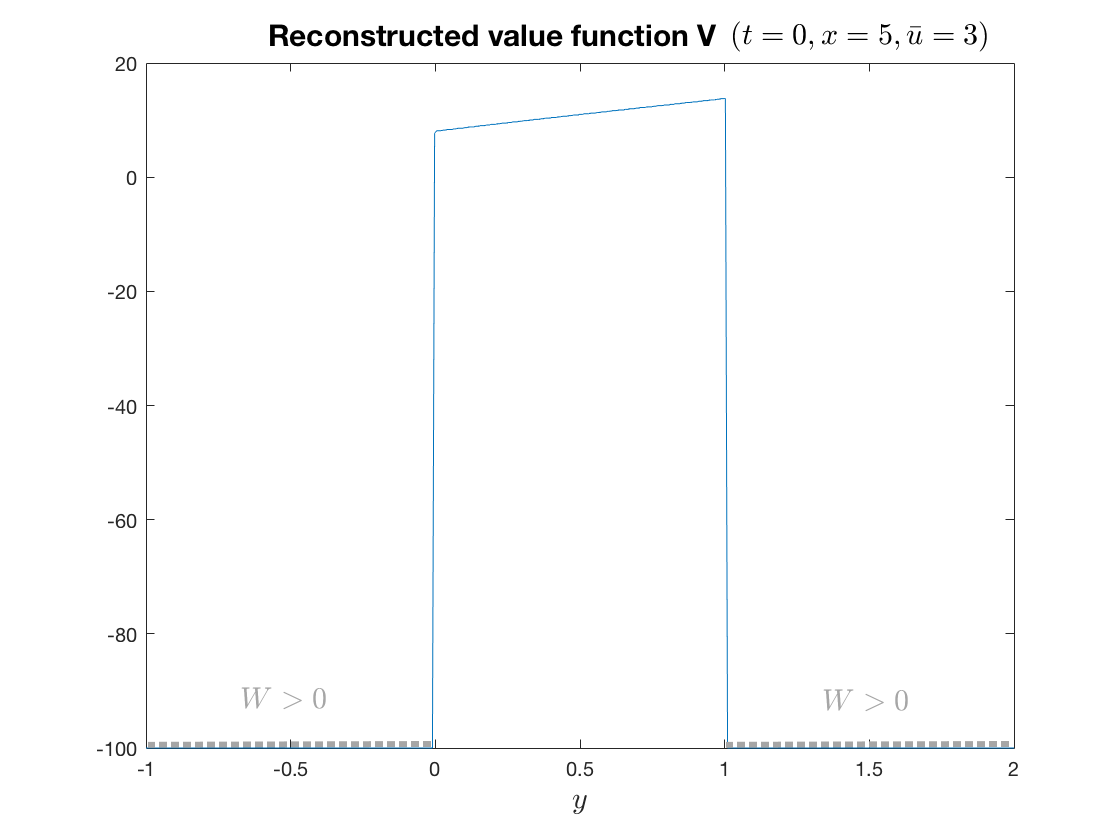}  
\includegraphics[width=0.43\textwidth]{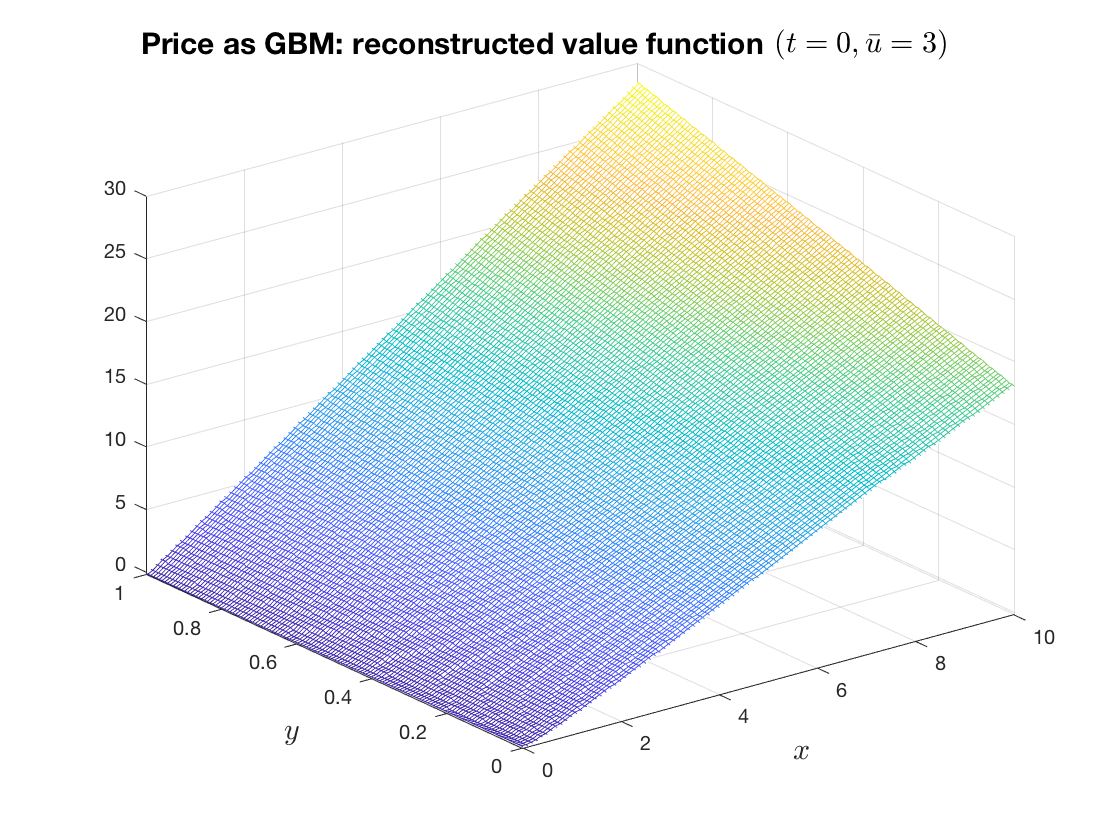}
\caption{Single dam, price modeled as a GBM with $b(t,x) = 0.05 x$ and $\sigma(t,x)=0.1 x$, maximal discharge rate $\bar u=3$ (assumption (H3) is satisfied). Top: the level set function $W$ (left) and its level sets (right) for $x=5$ and $t=0$. Bottom: reconstruction of the value function for $t=0$.}\label{fig:Wconstant}
\end{figure}

We now remove assumption (H3), taking for instance $\bar u=2$. The decomposition $V(t,x,y) = x v(t,y)$ in Equation \eqref{linearinx} holds also in this case, with the consequence that the optimal control is still a feedback control of $(t,y)$ only, but neither $V$ nor $v$ here satisfy an HJB equation (at least with the simple structure of equation \eqref{eq:HJB_constr_price}), and one must reconstruct $V$ via the function $W$ instead.  
We recall that, in this setting, the plot of the controllable region $\{\mathcal D_t, t\in [0,T]\}$ is given in Figure \ref{fig:domain}. We plot the level sets of $W$ and the reconstructed value function at point $x=5$ in the first two lines of Figure \ref{fig:W} on the left and right column, respectively. On the first line we choose $t=0$ and on the second $t=0.3$. We point out that in the second case the finiteness region for $V$, i.e. the region where $W=0$, is reduced to an interval strictly contained in $[0,\bar y]\equiv K$. This corresponds to the finding in Figure \ref{fig:domain}.
The  reconstructed value function is plotted for $t=0$ and $t=0.3$, respectively, at the bottom line of Figure \ref{fig:W}.

\begin{figure}
\includegraphics[width=0.42\textwidth]{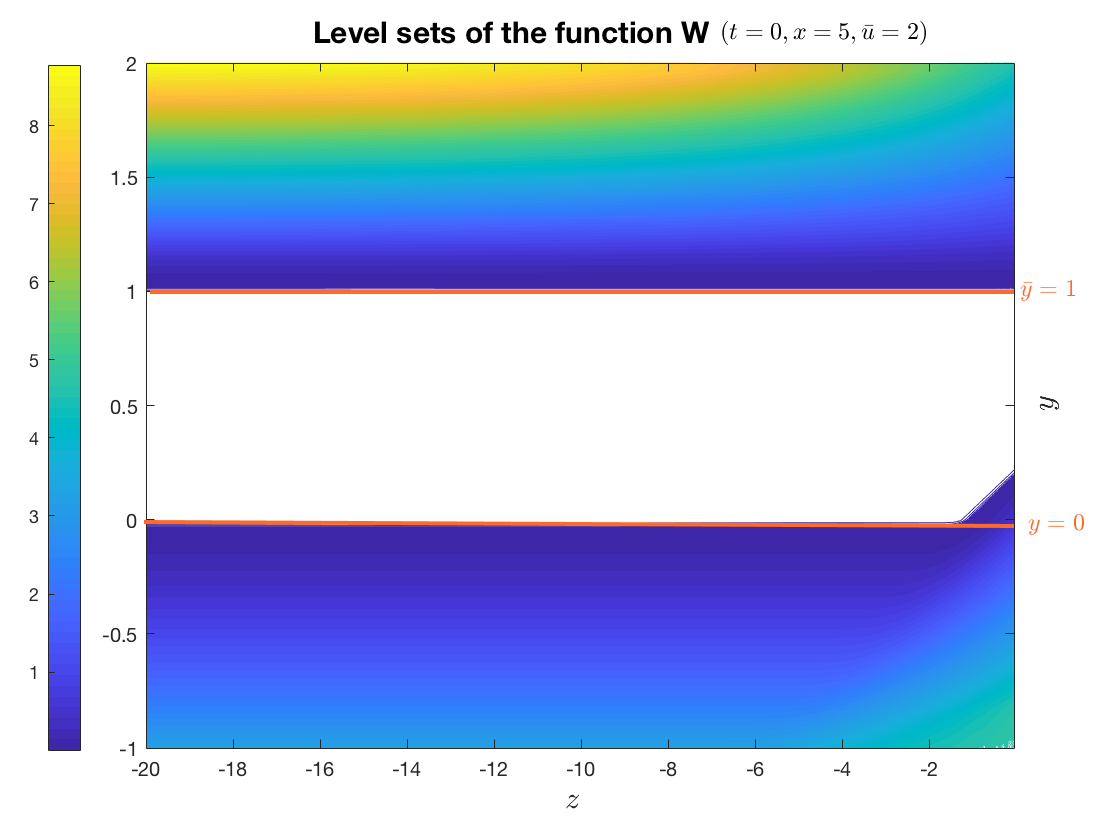}  \includegraphics[width=0.45\textwidth]{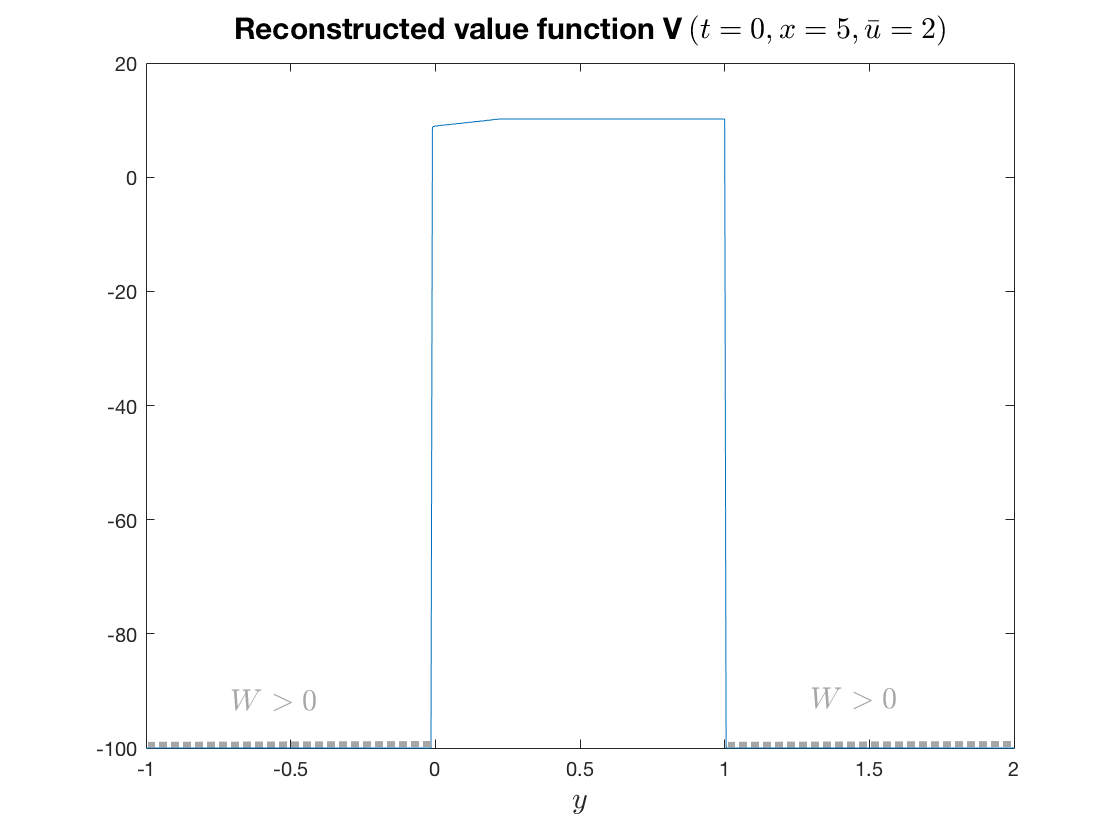}\\
 \includegraphics[width=0.42\textwidth]{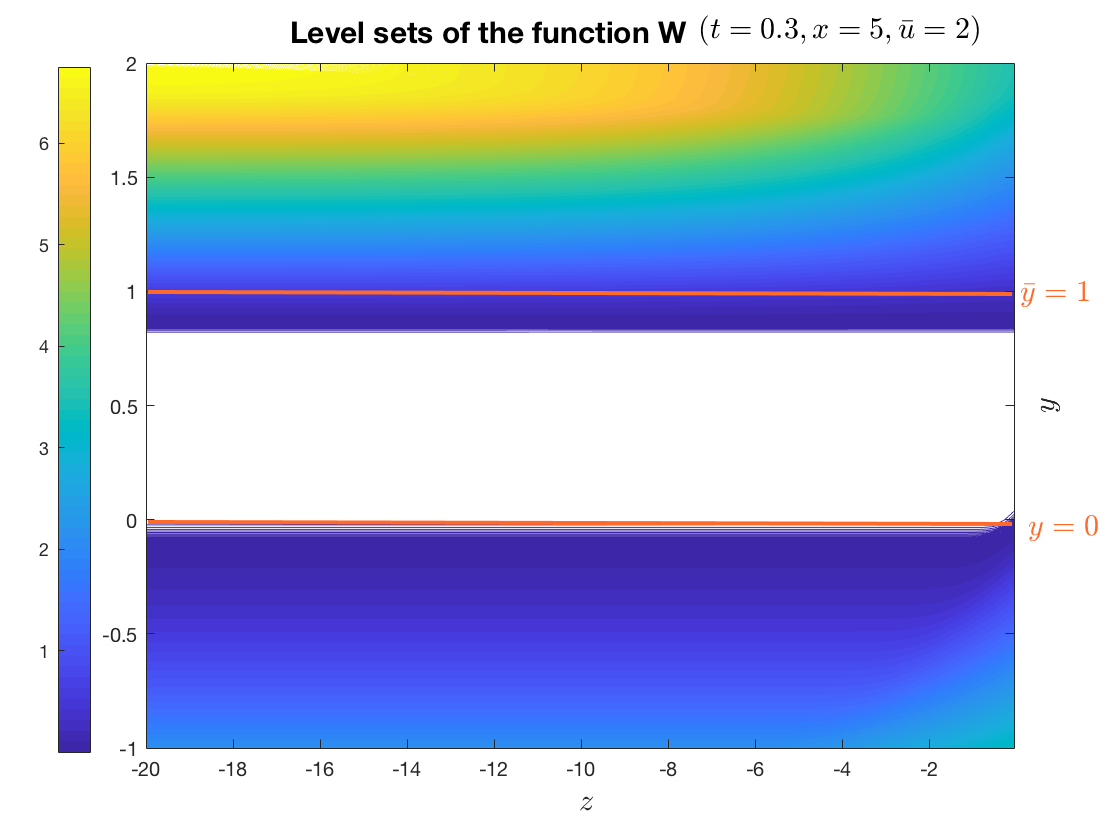}  \includegraphics[width=0.45\textwidth]{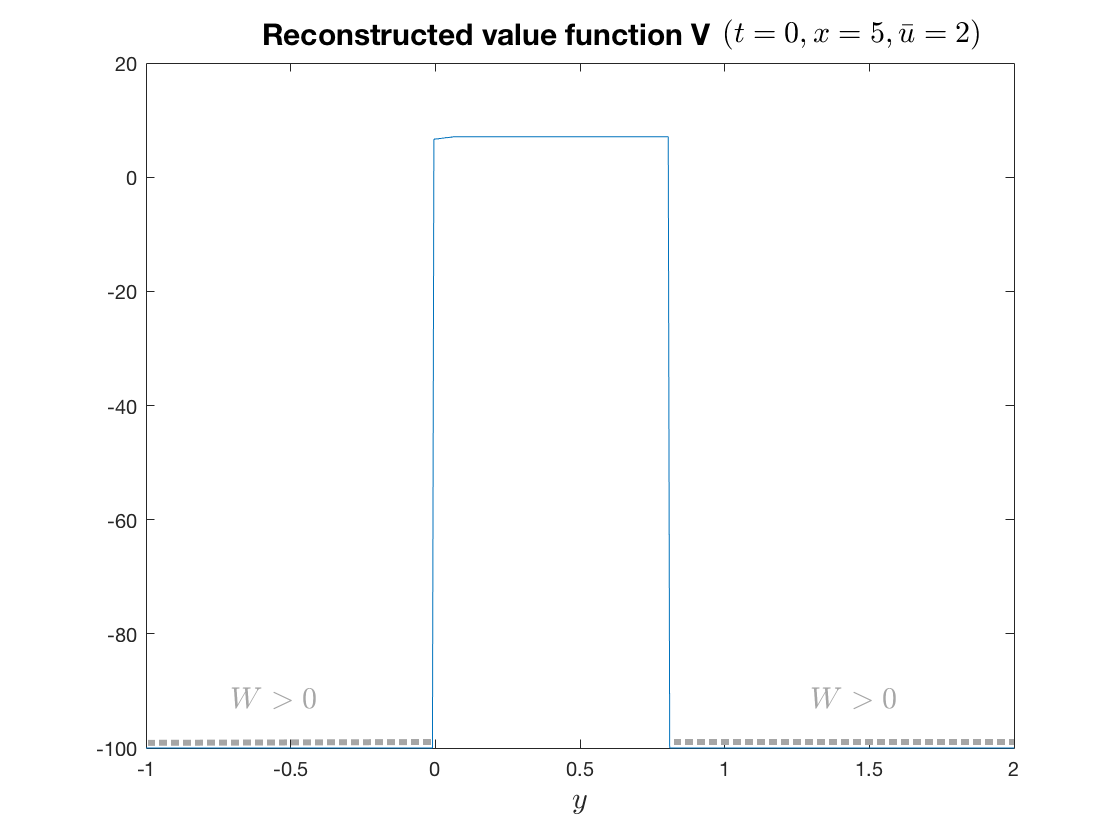}\\
\includegraphics[width=0.43\textwidth]{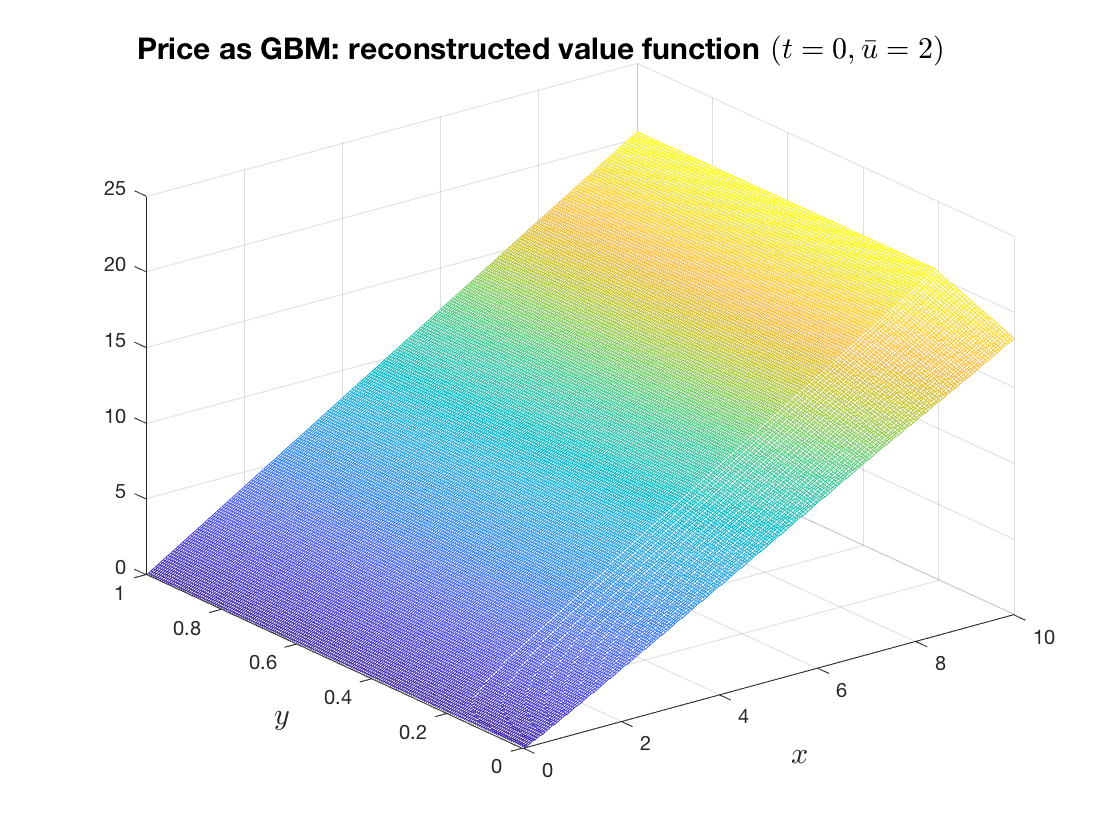}
\includegraphics[width=0.43\textwidth]{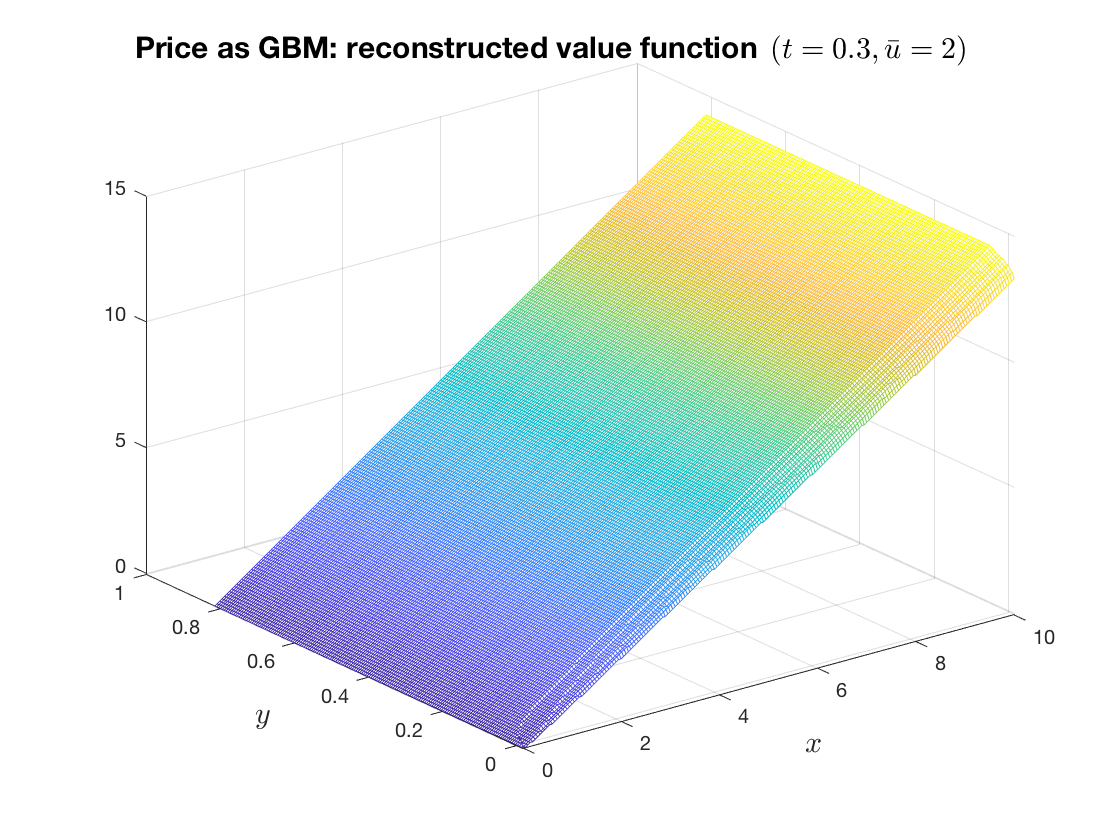}
\caption{Single dam, price modeled as a GBM with $b(t,x) = 0.05 x$ and $\sigma(t,x)=0.1 x$, maximal discharge rate $\bar u=2$ (assumption (H3) is not satisfied).  First and second lines: the level sets of the  function $W$ (left) and the reconstructed value function (right) for $x=5$ at two different time $t=0$ (first line) and $t=0.3$ (second line). Last line: reconstructed value function $V$ for any $x\geq 0$ at different times $t=0$ (left) and $t=0.3$ (right).  }\label{fig:W}
\end{figure}

One can observe that the value function is still increasing with respect to the reservoir level $y$, but the almost linear slope here seems to have a discontinuity for $y \simeq 0.2$: this is maybe due to the fact that here, for levels of $y$ sufficiently high, we lose flexibility in the control $u$, as one must check whether the strategy which was optimal in the previous case now ends up being not admissible: as a result, one could be forced to empty the dam when it would be not optimal. As a consequence, also the ranges of the value functions at $t=0$ are different: the controllable case has a maximum value of almost 30, while in the constrained case it does not surpass 25.

{\color{red}
\subsubsection{Electricity price modeled by  inhomogeneous GBM (IGBM) process}
}
Let us now pass to discuss the case where the electricity price evolves according to an IGBM, obtained by taking
$$
b(t,x) = a - b x\quad\text{and}\quad \sigma(t,x)= \sigma x. 
$$
Considering again the parameters used in Section \ref{fig:1dam_price}, we take $b := 1$, $\sigma := 0.1$ and $a := 5$.
Using the level set method we can approximate the value function and the optimal feedback, in the cases when assumption (H3) holds (again when $\bar u = 3$) and when it does not ($\bar u = 2$). First of all, since $\mathbb E[X^{t,x}_s] = e^{b(s-t)} x + \frac{a}{b} (e^{bs} - e^{b t})$, we have 
$$ G(t,x) = \bar \kappa \left( \frac{1}{b} \left(x + \frac{a}{b}\right) (e^{b(T-t)} - 1) - \frac{a}{b} (T - t) \right) $$ 
The reconstructed value function and optimal feedback are plotted in Figures \ref{fig:1IGBM_u3} and \ref{fig:1IGBM_u2} in the two cases when assumption (H3) holds ($\bar u = 3$) and does not hold ($\bar u = 2$), respectively. 

\begin{figure}[!h]
\includegraphics[width=0.6\textwidth]{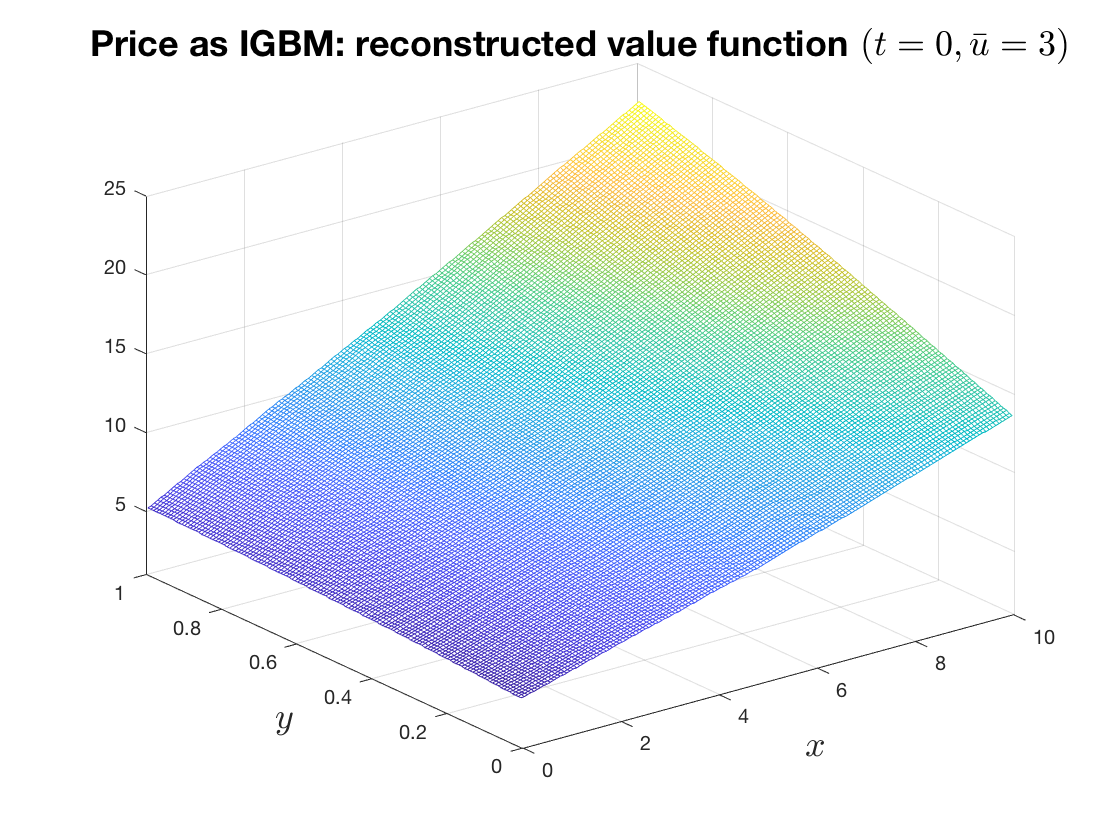} \\
\includegraphics[width=0.24\textwidth]{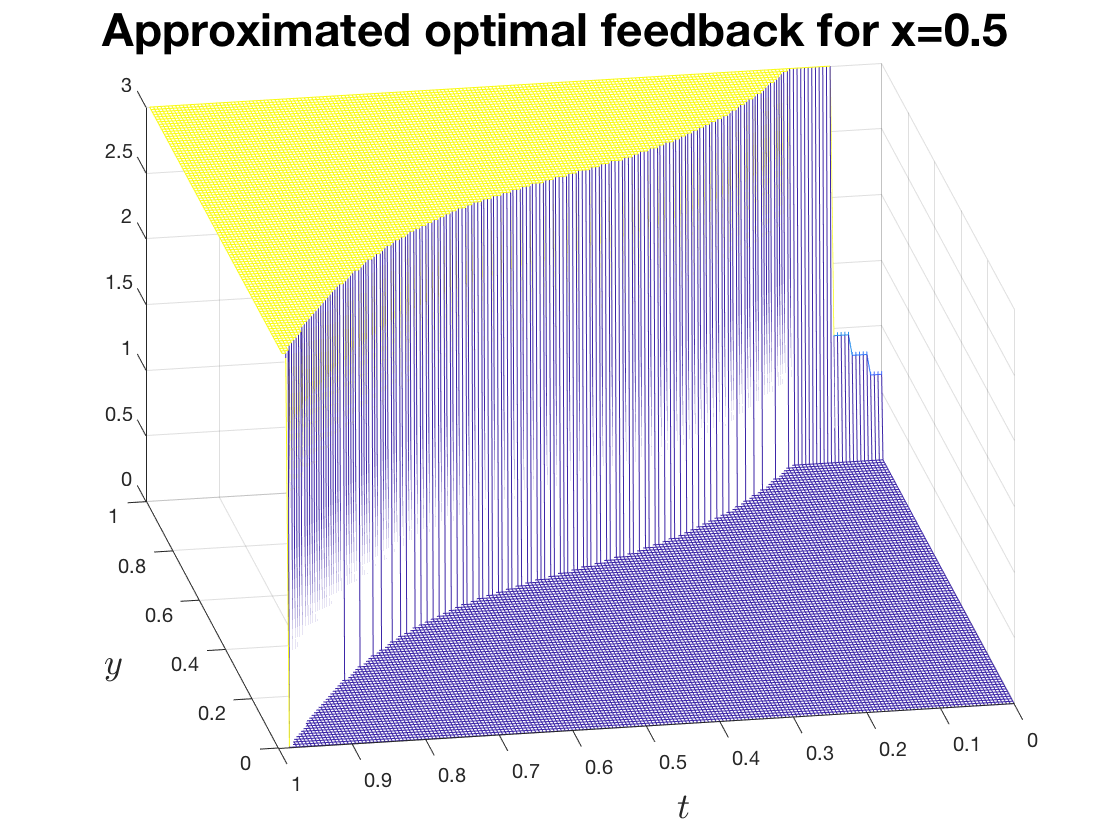}
\includegraphics[width=0.24\textwidth]{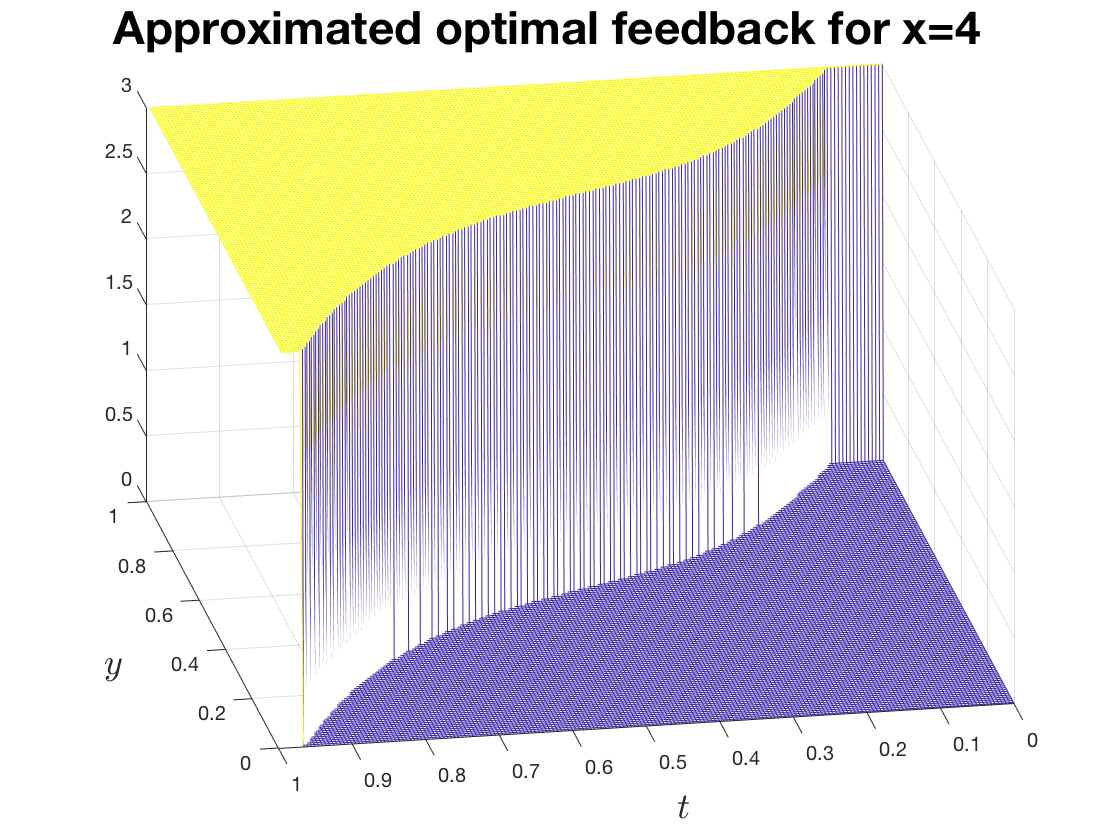}
\includegraphics[width=0.24\textwidth]{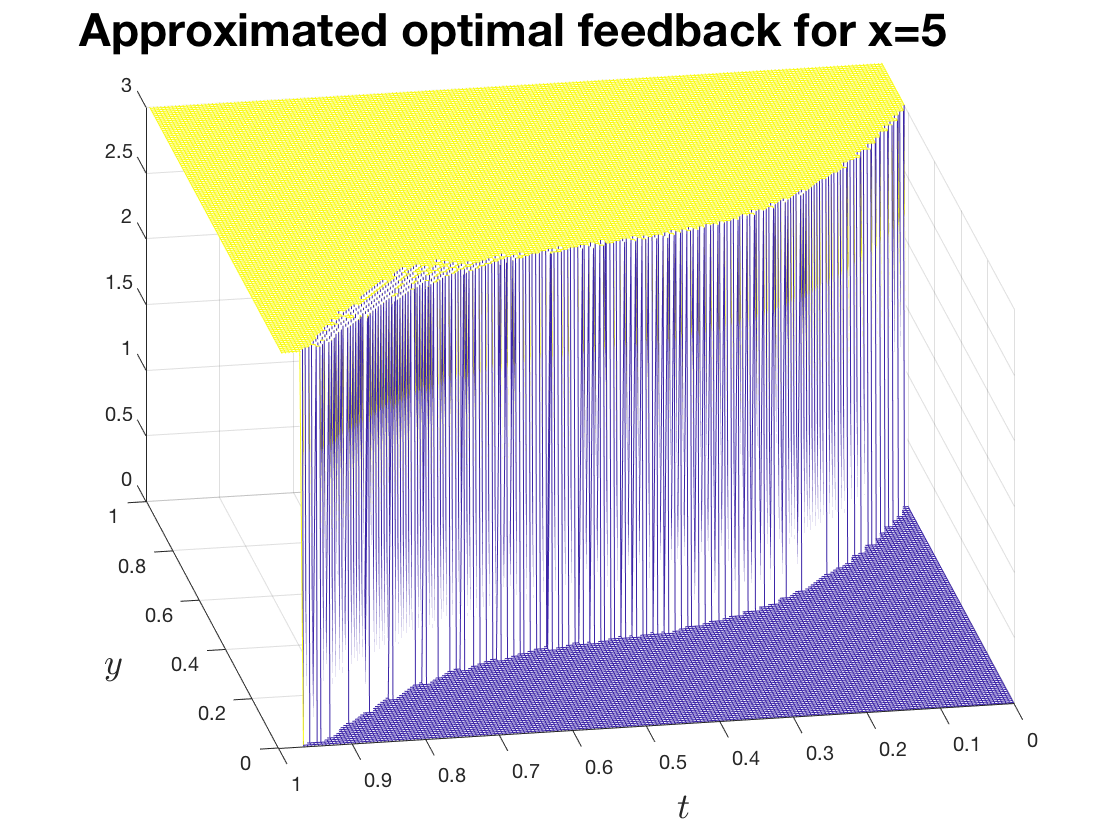}
\includegraphics[width=0.24\textwidth]{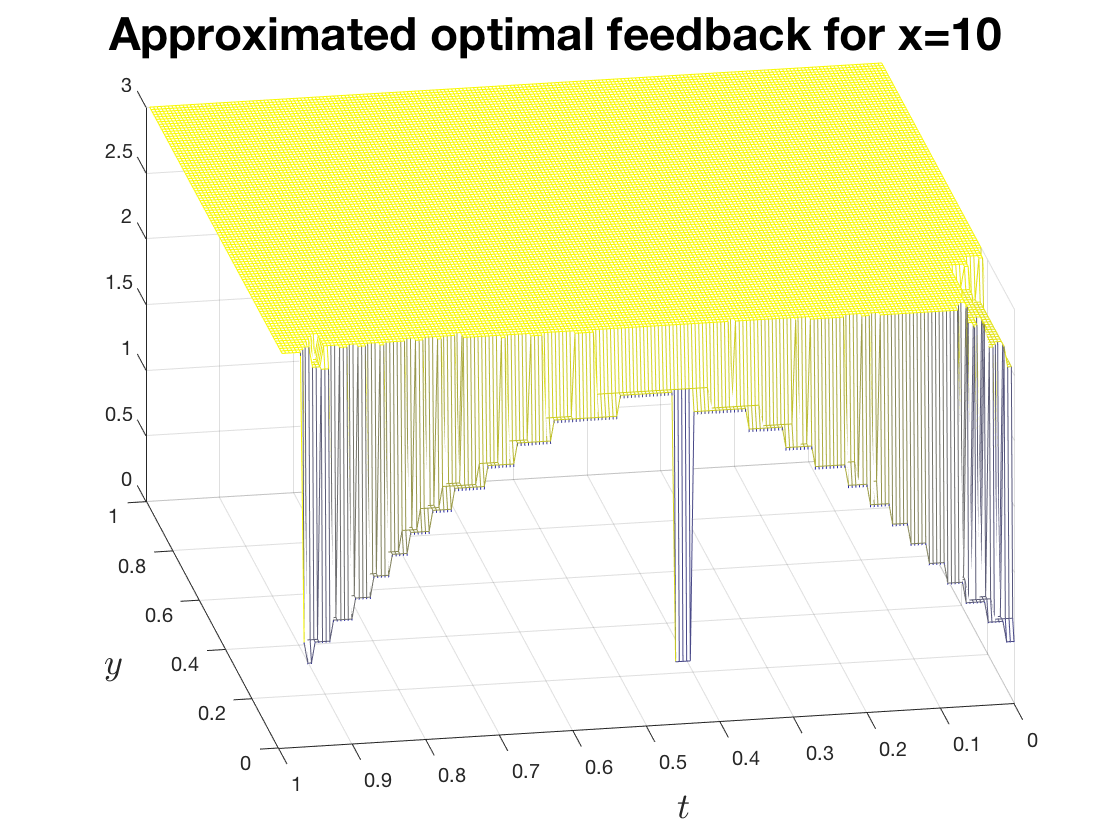}
\caption{One dam, price modeled as IGBM: the reconstructed value function and optimal feedback control with the controllability assumption (H3) with $\bar u=3$.}\label{fig:1IGBM_u3} 
\end{figure}
When $\bar u = 3$, we can see from Figure \ref{fig:1IGBM_u3} (top) that
the reconstructed value function well approximate those obtained by the direct solution of the HJB equation (see Figure \ref{fig:1dam_IGBMprice}), as well as the optimal control (Figure \ref{fig:1IGBM_u3}, three bottom panels). 
\begin{figure}[!h]
\includegraphics[width=0.6\textwidth]{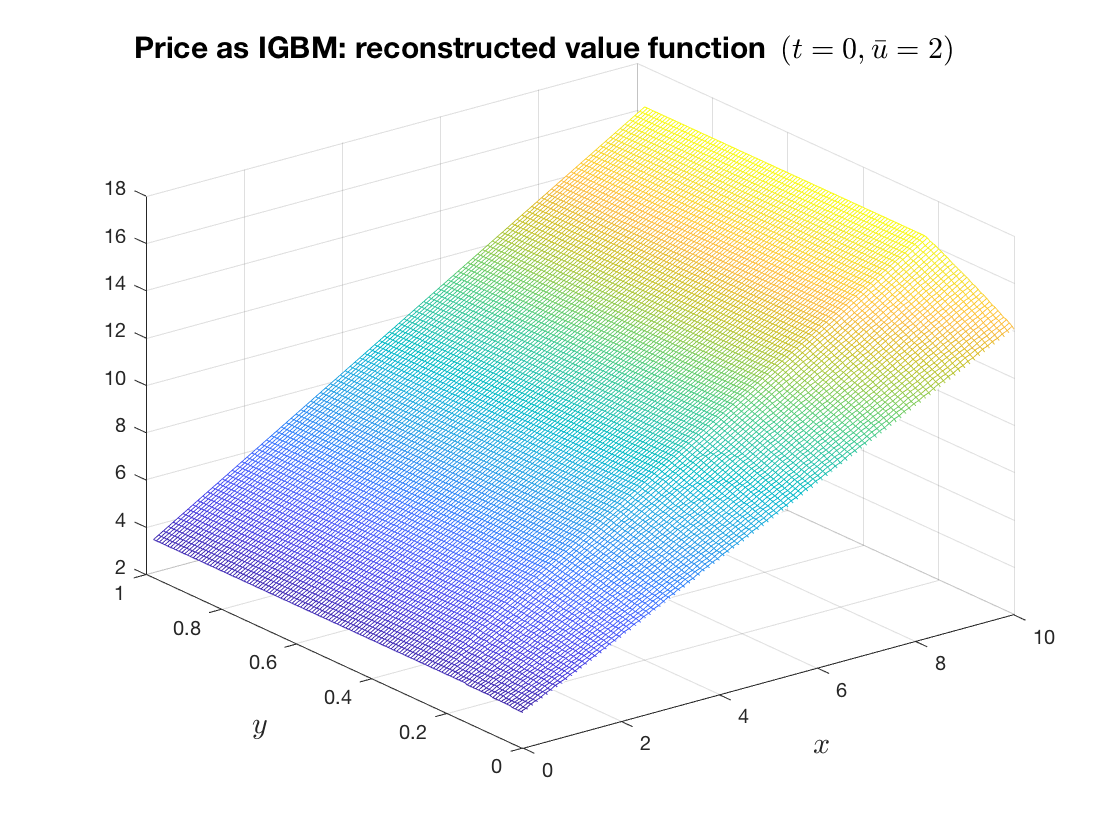}\\
\includegraphics[width=0.24\textwidth]{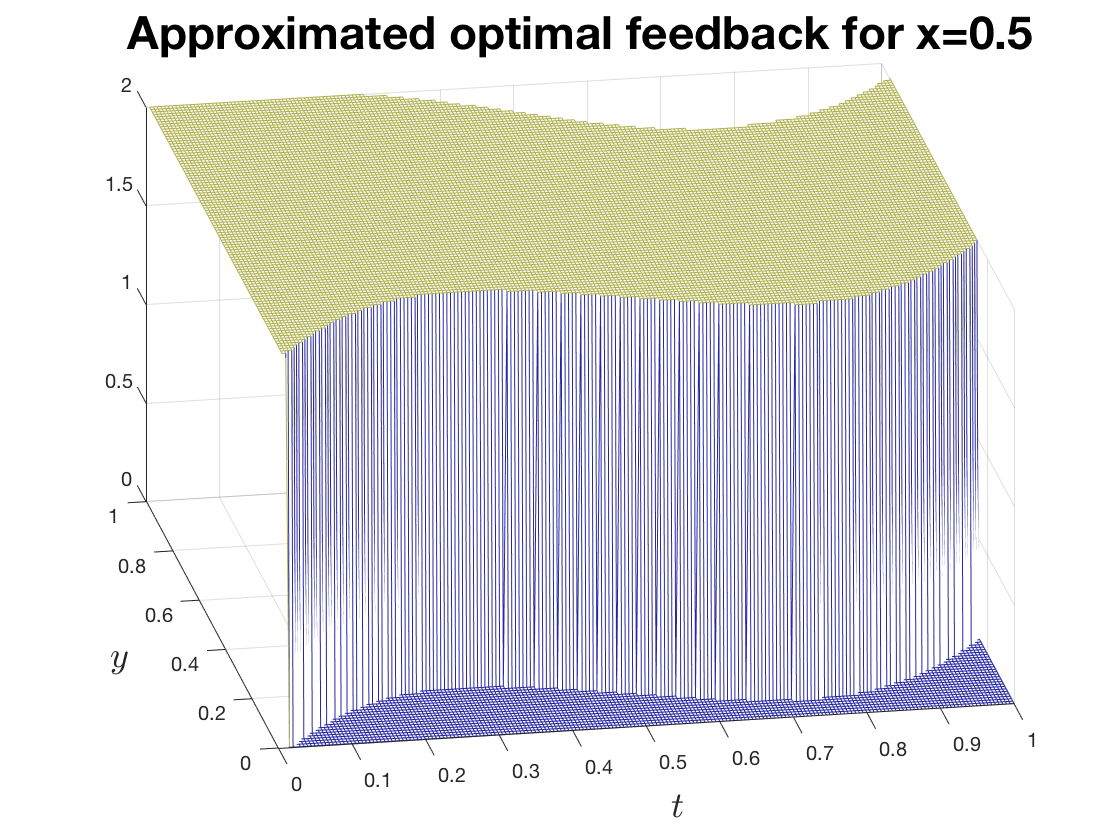}
\includegraphics[width=0.24\textwidth]{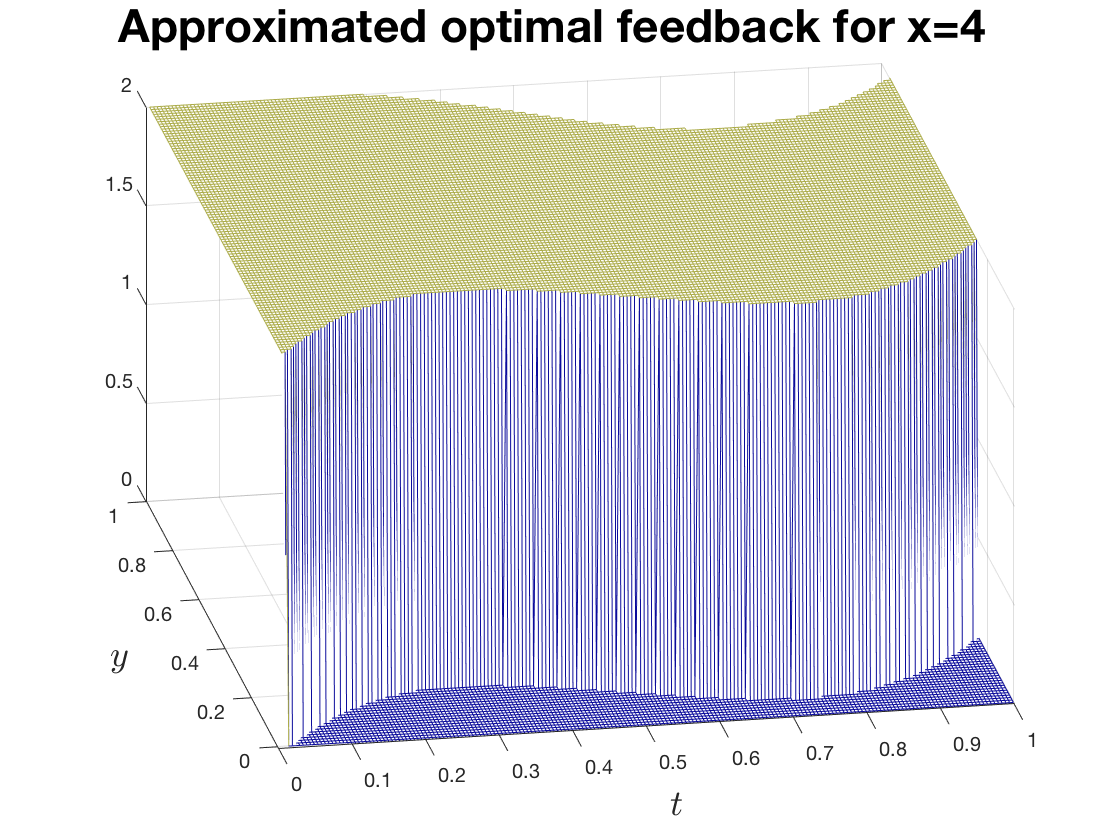}
\includegraphics[width=0.24\textwidth]{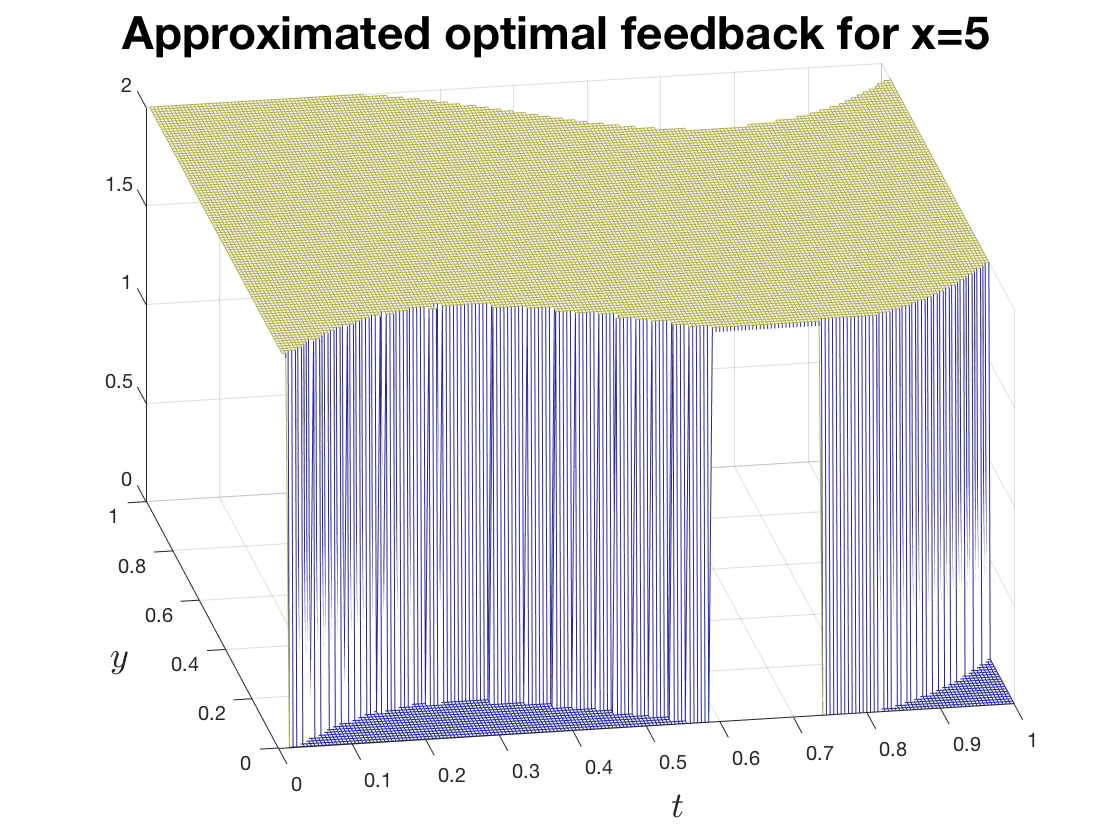}
\includegraphics[width=0.24\textwidth]{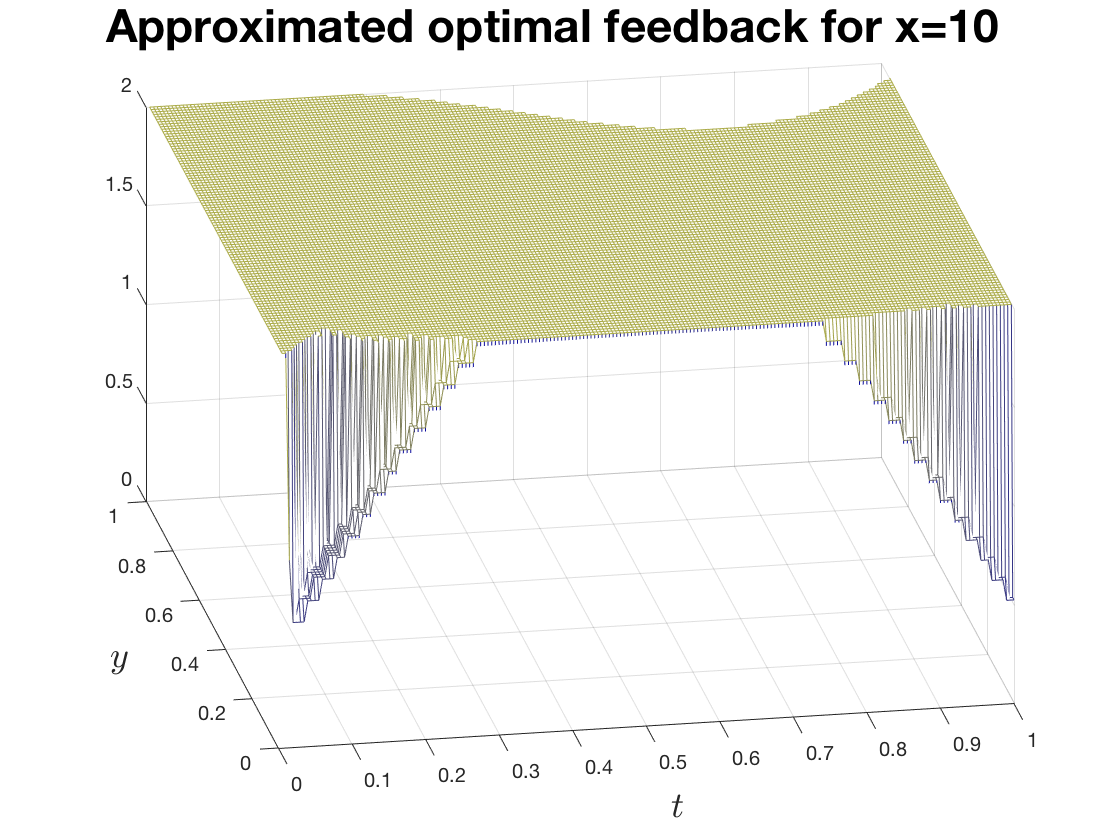}
\caption{Single dam, price modeled as a IGBM with $b(t,x) = 5-x$ and $\sigma(t,x)=0.1 x$, maximal discharge rate $\bar u=2$ (assumption (H3) is not satisfied).}\label{fig:1IGBM_u2} 
\end{figure}
When $\bar u = 2$, Figure \ref{fig:1IGBM_u2} represents the reconstructed value function (top) and the optimal control (four bottom panels). In this case, the value function exhibits a linear growth in $x$ (again here $V(t,0,y) > 0$), while in $y$ we have the same kind of discontinuity in the linear growth that we had with the GBM, i.e. the slope lowers from $y \simeq 0.2$ on. The interpretation here is similar: also here, for levels of $y$ sufficiently high, we lose flexibility in the control $u$, as one must check whether the strategy which was optimal in the previous case is still admissible. 
Indeed, as the four panels at the bottom of  Figure \ref{fig:1IGBM_u2} show for big levels of $y$, one is compelled to discharge the dam even if the price is low. As a consequence, also here the ranges of the value functions are different: the controllable case has a maximum value above 20, while in the uncontrollable case it is slightly greater than 16. The optimal control is also here of bang-bang type and increases with respect to $x$, with the same interpretation as in the controllable case.

\bigskip

\noindent
\textbf{Acknowledgments}.
The authors wish to thank Olivier Bokanowski, Paolo Falbo, Fulvio Fontini, Jacco Thijssen, Ralf Wunderlich and Hasnaa Zidani, for interesting discussions, and all the
participants to the workshop "Investments, Energy, and Green Economy 2019" in Brescia, the XLIII AMASES Conference in Perugia and the workshop  "Control Theory and applications" in L'Aquila.

\end{document}